\title[Infinitely many non-contractible periodic orbits]%
{On the existence of infinitely many non-contractible periodic orbits of Hamiltonian diffeomorphisms of closed symplectic manifolds}
\author{Ryuma Orita} 
\address{National Center for Theoretical Sciences, Taipei 10617, Taiwan}
\email{ryuma.orita@gmail.com}
\urladdr{https://ryuma-orita.github.io/}
\thanks{This work was supported by JSPS KAKENHI Grant Number JP02607057 and the Program for Leading Graduate Schools, MEXT, Japan.
The author was supported by the Grant-in-Aid for JSPS fellows.}
\subjclass[2010]{Primary 53D40; Secondary 37J10, 37J45, 20F19}
\keywords{Symplectic manifolds, non-contractible periodic orbits, Hamiltonian flows, Floer homology, augmented action}
\newtheorem{theorem}{Theorem}[section]
\newtheorem{lemma}[theorem]{Lemma}
\newtheorem{proposition}[theorem]{Proposition}
\newtheorem{corollary}[theorem]{Corollary}
\theoremstyle{definition}
\newtheorem{definition}[theorem]{Definition}
\theoremstyle{remark}
\newtheorem{remark}[theorem]{Remark}
\newcommand{\Cr}{\mathop{\mathrm{Crit}}\nolimits}
\newcommand{\rank}{\mathop{\mathrm{rank}}\nolimits}
\newcommand{\Ker}{\mathop{\mathrm{Ker}}\nolimits}
\newcommand{\Image}{\mathop{\mathrm{Im}}\nolimits}
\newcommand{\gap}{\mathop{\mathrm{gap}}\nolimits}
\newcommand{\sgn}{\mathop{\mathrm{sgn}}\nolimits}
\newcommand{\relmiddle}[1]{\mathrel{}\middle#1\mathrel{}}
\begin{document}

\begin{abstract}
We show that the presence of a non-contractible one-periodic orbit of a Hamiltonian diffeomorphism of a connected closed symplectic manifold $(M,\omega)$ implies the existence of infinitely many non-contractible simple periodic orbits,
provided that the symplectic form $\omega$ is aspherical and the fundamental group $\pi_1(M)$ is either a virtually abelian group or an $\mathrm{R}$-group.
We also show that a similar statement holds for Hamiltonian diffeomorphisms of closed monotone or negative monotone symplectic manifolds under the same conditions on their fundamental groups.
These results generalize some works by Ginzburg and G\"urel.
The proof uses the filtered Floer--Novikov homology for non-contractible periodic orbits.
\end{abstract}

\maketitle

\tableofcontents


\section{Introduction}

Let $(M,\omega)$ be a connected closed symplectic manifold and $H\colon S^1\times M\to\mathbb{R}$ a Hamiltonian on $M$.
The Hamiltonian $H$ defines the Hamiltonian isotopy $\{\varphi_H^t\}_{t\in\mathbb{R}}$ with $\varphi_H^0=\mathrm{id}$
and the Hamiltonian diffeomorphism $\varphi_H=\varphi_H^1$.
In the present paper, we study periodic orbits of the Hamiltonian isotopies of various periods.

It is one of the most important problems in symplectic geometry to find periodic solutions of Hamiltonian systems.
In 1984, Conley \cite{Co} conjectured that every Hamiltonian diffeomorphism of tori $\mathbb{T}^{2n}$ has infinitely many simple periodic orbits.
This conjecture was proved in \cite{Hi,Ma}.
Other than for tori, Ginzburg and G\"urel \cite{GG17} proved the Conley conjecture for a broad class of closed symplectic manifolds containing
closed symplectic manifolds whose first Chern class is aspherical and closed negative monotone symplectic manifolds
(see Subsection \ref{subsection:conventions} for the definitions).

The Conley conjecture fails for the 2-sphere $S^2$.
Indeed, an irrational rotation on $S^2$ about the $z$-axis is a Hamiltonian diffeomorphism having no periodic points except two fixed points.
However, Franks \cite{Fr92,Fr96} proved that every Hamiltonian diffeomorphism of $S^2$ with at least three fixed points
has infinitely many simple periodic orbits.
Concerning this phenomenon,
Hofer and Zehnder \cite[Chapter 6]{HZ} conjectured that
every Hamiltonian diffeomorphism with more non-degenerate fixed points than a lower bound derived from the Arnold conjecture
has infinitely many simple periodic orbits.

G\"urel \cite{Gu13} proved a theorem along this line, where the threshold is the existence of a non-contractible non-degenerate (or just homologically non-trivial) one-periodic orbit.
For closed symplectic manifolds, non-contractible periodic orbits are unnecessary
in the sense that the total Floer homology $\mathrm{HF}(H;\alpha)$ for non-contractible periodic orbits representing $\alpha\neq 0$ always vanishes.
Actually, she \cite{Gu13} proved that every Hamiltonian diffeomorphism $\varphi_H$ of a closed symplectic manifold
equipped with an atoroidal (see Subsection \ref{subsection:conventions} for the definition) symplectic form has infinitely many simple periodic orbits,
provided that $\varphi_H$ has a non-contractible homologically non-trivial one-periodic orbit (see also \cite[Theorem 2.4]{GG16} for a refined version of her theorem).
To be more precise, she proved

\begin{theorem}[{\cite[Theorem 1.1]{Gu13},\cite[Theorem 2.4]{GG16}}]\label{theorem:Gu13}
Let $(M,\omega)$ be a closed symplectic manifold with atoroidal $\omega$.
Let $H\colon S^1\times M\to\mathbb{R}$ be a Hamiltonian having a non-degenerate one-periodic orbit $x$ in the homotopy class $\alpha$
such that $[\alpha]\neq 0$ in $H_1(M;\mathbb{Z})/\mathrm{Tor}$,
and $\mathcal{P}_1(H;[\alpha])$ is finite.
Then for every sufficiently large prime $p$,
the Hamiltonian $H$ has a simple periodic orbit in the homotopy class $\alpha^p$
and with period either $p$ or its next prime $p'$.
Moreover, if $\pi_1(M)$ is torsion-free hyperbolic, then the condition $[\alpha]\neq 0$ can be replaced by $\alpha\neq 1$,
i.e., $\alpha$ not being the connected components of contractible loops,
and no finiteness condition is needed.
\end{theorem}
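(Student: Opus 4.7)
The plan is to work with filtered Floer--Novikov homology on the cover of the component of the free loop space $\mathcal{L}_\alpha M$ determined by $\alpha$, exploiting the vanishing of the total Floer--Novikov homology in non-trivial free-homotopy classes. First I would set up the action functional on the appropriate $\Gamma_\alpha$-cover of $\mathcal{L}_\alpha M$; the atoroidal hypothesis on $\omega$ is precisely what guarantees that the period group of $\omega$ pulled back along cylinders between orbits in class $\alpha$ is discrete (in fact, atoroidality yields triviality), so the action is well-defined and the filtered complex $\mathrm{HF}^{(a,b)}(H;\alpha)$ is sensible. A standard deformation to a $C^2$-small autonomous Hamiltonian, which has no non-contractible one-periodic orbits, then shows that the total Floer--Novikov homology $\mathrm{HF}(H;\alpha)$ vanishes, and similarly $\mathrm{HF}(H^{\natural p};\alpha^p)=0$ for every iterate.

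Next I would iterate: for a prime $p$, consider the $p$-fold iteration $H^{\natural p}$ and the iterated orbit $x^p$, which lies in the class $\alpha^p$. Because $[\alpha]$ is non-torsion in $H_1(M;\mathbb{Z})/\mathrm{Tor}$, the classes $\alpha,\alpha^2,\alpha^3,\ldots$ are pairwise distinct, so every simple periodic orbit in class $\alpha^p$ has minimal period exactly $p$; equivalently, a non-simple periodic orbit of period $p$ in class $\alpha^p$ must be the $p$-th iteration of some one-periodic orbit in class $\alpha$, of which by hypothesis there are only finitely many. The crucial input is a local Floer--Novikov version of the Ginzburg--G\"urel persistence theorem: for every admissible prime $p$ (i.e., for all $p$ outside a finite set determined by the eigenvalues of $d\varphi_H^1$ at the orbits in $\mathcal{P}_1(H;[\alpha])$), the local Floer homology $\mathrm{HF}_{\mathrm{loc}}(H^{\natural p},x^p)$ is non-zero and sits in a fixed degree range.

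Now I argue by contradiction. Suppose that for a large admissible prime $p$ there is no simple periodic orbit in class $\alpha^p$ of period either $p$ or its next prime $p'$. Then, in the Floer--Novikov complex $\mathrm{CF}(H^{\natural p};\alpha^p)$, the non-trivial local class carried by $x^p$ must be killed by the differential, hence paired with another generator $y^p$ (an iteration, by assumption), and the action difference $|A_{H^{\natural p}}(x^p)-A_{H^{\natural p}}(y^p)|=p\,|A_H(x)-A_H(y)|$ is bounded by a constant coming from standard Floer continuation estimates (independent of $p$). Dividing by $p$ and letting $p\to\infty$ through admissible primes forces $A_H(y)=A_H(x)$, and a refinement of this argument (using both $p$ and $p'$, and the fact that $p'/p\to 1$) identifies the action and homotopy data of $y$ with those of $x$, contradicting the non-triviality of the pairing; this is where passing from $p$ to $p'$ is essential, as it supplies a second equation that pins down the carrier. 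In the torsion-free hyperbolic case, cyclic subgroups of $\pi_1(M)$ are malnormal, so conjugacy classes $\alpha^k$ are pairwise distinct without any non-torsion assumption on $[\alpha]$, and the finiteness of $\mathcal{P}_1(H;[\alpha])$ follows automatically from the vanishing of $\mathrm{HF}(H;\alpha)$ together with non-degeneracy.

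The hard part will be establishing the persistence of local Floer--Novikov homology under iteration in the Novikov setting with a non-trivial action covering, and then extracting the sharp action-gap estimate that survives the comparison between $p$ and the next prime $p'$; this is where the subtle dichotomy ``period $p$ or $p'$'' is forced rather than chosen, and where the atoroidality of $\omega$ is used in its strongest form to control the ambiguity of the action functional on cylinders joining $x^p$ to $y^p$.
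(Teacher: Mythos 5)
This theorem is quoted from G\"urel and Ginzburg--G\"urel and not reproved in the paper, but the paper's Theorem \ref{theorem:main3} is its direct generalization and its proof in Section \ref{section:main} is the template to compare against. Your opening moves match it: the filtered Floer--Novikov complex on the $\Gamma_\alpha$-cover, the persistence of local Floer homology under admissible iteration (\cite{GG10}), and the homological argument that a $p'$-periodic orbit in class $\alpha^p$ must be simple because $p_i[\alpha]/p_{i+1}$ is not integral. The core of your contradiction argument, however, diverges from the actual mechanism and contains a genuine gap.

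You argue that since the total Floer--Novikov homology vanishes, the local class of $x^p$ ``must be killed by the differential, hence paired with another generator $y^p$,'' and that the action difference $\lvert\mathcal{A}_{H^{\natural p}}(\bar{x}^p)-\mathcal{A}_{H^{\natural p}}(\bar{y}^p)\rvert$ is ``bounded by a constant coming from standard Floer continuation estimates (independent of $p$).'' This is the gap: the Floer \emph{differential} carries no a priori upper bound on its action drop --- the energy identity only gives positivity --- so no such constant exists. The bounded-shift estimate you are invoking applies to \emph{continuation maps} between $H^{\natural p}$ and $H^{\natural p'}$, whose shift is controlled by $\int_0^1\max_M(H^{\natural p'}_t-H^{\natural p}_t)\,dt=O(p'-p)=o(p)$, not to the differential of a single complex. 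Moreover, even granting your bound, the conclusion $\mathcal{A}_H(y)=\mathcal{A}_H(x)$ is not a contradiction: distinct orbits may share an action value, and ``non-triviality of the pairing'' is not violated by coincidence of action and homotopy data. The correct argument runs the other way: normalize $\mathcal{A}_H(\bar{x})=0$, pick $c>0$ with $[-c,c)\cap\mathrm{Spec}(H;\alpha)=\{0\}$; under the contradiction hypothesis all $p$-periodic orbits in $\alpha^p$ are iterates, and atoroidality gives $[-pc,pc)\cap\mathrm{Spec}(H^{\natural p};\alpha^p)=\{0\}$, so $\mathrm{HFN}^{[-pc,pc)}(H^{\natural p};\alpha^p)$ contains $\mathrm{HF}^{\mathrm{loc}}(H^{\natural p},\bar{x}^p)\neq 0$ as a direct summand; the triangle of continuation maps through $H^{\natural p'}$ composes to the quotient-inclusion isomorphism because the window shift $2C(p'-p)$ is eventually smaller than $pc$, forcing $\mathrm{HFN}^{[-K+\delta,K+\delta)}(H^{\natural p'};\alpha^p)\neq 0$ and hence a (necessarily simple) $p'$-periodic orbit. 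Two smaller errors: vanishing of $\mathrm{HF}(H;\alpha)$ does not imply finiteness of $\mathcal{P}_1(H;[\alpha])$ in the hyperbolic case (a complex with infinitely many generators can be acyclic); and a non-simple $p$-periodic orbit in $\alpha^p$ is a priori the $p$-th iterate of an orbit in some class $\beta$ with $\beta^p=\alpha^p$, not necessarily $\beta=\alpha$, which is exactly why the finiteness hypothesis is imposed on $\mathcal{P}_1(H;[\alpha])$ and why one must check that the finitely many classes $\alpha_j^p$ become distinct for large $p$.
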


Here $\mathcal{P}_1(H;[\alpha])$ is the set of one-periodic orbits of $\varphi_H$ representing the homology class $[\alpha]$.
The author \cite[Theorem 1.1]{Or} proved that the conclusion of Theorem \ref{theorem:Gu13} holds for the tori $(\mathbb{T}^{2n},\omega_{\mathrm{std}})$.
We note that the standard symplectic form $\omega_{\mathrm{std}}$ on $\mathbb{T}^{2n}$ is not atoroidal but aspherical.
It is worth pointing out here that Theorem \ref{theorem:Gu13} implies the existence of infinitely many \textit{non-contractible} simple periodic orbits of $\varphi_H$.
Focusing on non-contractible ones,
Ginzburg and G\"urel \cite{GG16} proved that a statement similar to Theorem \ref{theorem:Gu13} holds
for closed toroidally monotone or toroidally negative monotone (see Subsection \ref{subsection:conventions} for the definition) symplectic manifolds under an assumption on the ``Euler characteristic" $\chi$.
More precisely, they proved

\begin{theorem}[{\cite[Theorem 2.2]{GG16}}]\label{theorem:GG16}
Let $(M,\omega)$ be a closed toroidally monotone or toroidally negative monotone symplectic manifold.
Let $H\colon S^1\times M\to\mathbb{R}$ be a Hamiltonian
such that $\mathcal{P}_1(H;[\alpha])$ is finite,
and $\chi(H,I;\alpha)\neq 0$ for some interval $I$ with $\partial I\cap\widetilde{\mathrm{Spec}}(H;\alpha)=\emptyset$,
where $\alpha\in [S^1,M]$, $[\alpha]\neq 0$ in $H_1(M;\mathbb{Z})/\mathrm{Tor}$.
Then for every sufficiently large prime $p$, the Hamiltonian $H$ has a simple periodic orbit in the homotopy class $\alpha^p$
and with period either $p$ or its next prime $p'$.
Moreover, if $\pi_1(M)$ is torsion-free hyperbolic, then the condition $[\alpha]\neq 0$ can be replaced by $\alpha\neq 1$
and no finiteness condition is needed.
\end{theorem}

Here $\chi(H,I;\alpha)$ is the sum of the Poincar\'e--Hopf indices
of the Poincar\'e return maps of one-periodic orbits of $\varphi_H$ representing $\alpha$ with augmented action (see Subsection \ref{subsection:augmentedaction} for the definition) in $I$,
and $\widetilde{\mathrm{Spec}}(H;\alpha)$ is the set of values of the augmented action of one-periodic orbits of $\varphi_H$ representing $\alpha$.


\section{Main results}\label{section2}

Let us now state our main results.
Let $(M,\omega)$ be a connected closed symplectic manifold.
Let $\alpha\in [S^1,M]=\pi_1(M)/\sim_{\mathrm{conj}}$ be a free homotopy class of loops in $M$ and choose $\gamma_{\alpha}\in\pi_1(M)$ whose conjugacy class is $\alpha$.

\subsection{Results}

If $\pi_1(M)$ is \textit{virtually abelian}, by definition,
it contains an abelian subgroup $A$ of finite index.
Since $(\pi_1(M):A)<\infty$, there exists $\ell_{\alpha}\in\{1,\ldots,(\pi_1(M):A)\}$ such that $\gamma_{\alpha}^{\ell_{\alpha}}\in A$.
Let $q_{\alpha}$ be an arbitrary positive integer coprime to $\ell_{\alpha}$.
We consider the set $P_{q_{\alpha},\ell_{\alpha}}$ of primes congruent to $q_{\alpha}$ modulo $\ell_{\alpha}$
\begin{align}\label{eq:pl}
	P_{q_{\alpha},\ell_{\alpha}}
	&=\{\,p\in\mathbb{N}\mid p\ \text{is prime},\ p\equiv q_{\alpha}\ \text{mod}\ \ell_{\alpha}\,\}\\
	&=\{\,p_i\mid i\in\mathbb{N},\ p_i<p_{i+1}\,\}.\notag
\end{align}
Dirichlet's theorem on arithmetic progressions \cite{Di} asserts that $\#P_{q_{\alpha},\ell_{\alpha}}=\infty$.

On the other hand, if $\pi_1(M)$ is an \textit{$\mathrm{R}$-group}
(i.e., a group that the equality $g^n=h^n$ always implies $g=h$, where $g,h\in\pi_1(M)$ and $n\in\mathbb{N}$),
we think of $\ell_{\alpha}$ as an arbitrary positive integer.

One of our main results is the following theorem.

\begin{theorem}\label{theorem:main}
Assume that $\omega$ is aspherical and $\pi_1(M)$ is either a virtually abelian group or an $\mathrm{R}$-group.
Let $H\colon S^1\times M\to\mathbb{R}$ be a Hamiltonian
having a non-degenerate one-periodic orbit $x$ in the homotopy class $\alpha$
such that $[\alpha]\neq 0$ in $H_1(M;\mathbb{Z})/\mathrm{Tor}$,
$\mathcal{P}_1(H;[\alpha])$ is finite and $\omega$ is $\alpha$-toroidally rational
$($see Subsection \ref{subsection:conventions}$)$.
Then for every sufficiently large prime $p_i\in P_{q_{\alpha},\ell_{\alpha}}$,
the Hamiltonian $H$ has a simple periodic orbit in the homotopy class $\alpha^{p_i}$
and with period either $p_i$ or $p_{i+1}$.
Moreover, when $\pi_1(M)$ is an $\mathrm{R}$-group, then the finiteness condition on $\mathcal{P}_1(H;[\alpha])$
can be replaced by that on $\mathcal{P}_1(H;\alpha)$.
\end{theorem}

When $\pi_1(M)$ is an $\mathrm{R}$-group,
we can also prove that for every sufficiently large prime $p$,
the Hamiltonian $H$ has a simple periodic orbit in $\alpha^p$ and with period either $p$ or its next prime $p'$.

The main tool for the proof of Theorem \ref{theorem:main} is the filtered Floer--Novikov homology $\mathrm{HFN}^I(H;\alpha)$
for non-contractible periodic orbits used in \cite{Or}.
The main difficulty in using the Floer--Novikov homology in our setting is that all lifts of orbits shifted by the Novikov actions appear as generators.
However, if $\omega$ is aspherical and $\pi_1(M)$ is either a virtually abelian group or an $\mathrm{R}$-group,
then Lemmas \ref{lemma:key} and \ref{lemma:keynilp} enable us to deal with them.

In the present paper, we also prove the following theorems which generalize Theorem \ref{theorem:GG16} under conditions on the fundamental group.

\begin{theorem}\label{theorem:main2}
Assume that $(M,\omega)$ is monotone or negative monotone with monotonicity constant $\lambda$ and $\pi_1(M)$ is virtually abelian.
Let $H\colon S^1\times M\to\mathbb{R}$ be a Hamiltonian
such that $\mathcal{P}_1(H;[\alpha])$ is finite,
and $\chi(H,I;\alpha)\neq 0$ for every sufficiently small interval $I$ centered at some $s\in\widetilde{\mathrm{Spec}}(H;\alpha)$,
where $\alpha\in [S^1,M]$, $[\alpha]\neq 0$ in $H_1(M;\mathbb{Z})/\mathrm{Tor}$
and $\omega$ is $\alpha$-toroidally rational.
Then for every sufficiently large prime $p_i\in P_{q_{\alpha},\ell_{\alpha}}$, the Hamiltonian $H$ has a simple periodic orbit in the homotopy class $\alpha^{p_i}$
and with period either $p_i$ or $p_{i+1}$.
\end{theorem}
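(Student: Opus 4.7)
The plan is to adapt the proof of Theorem \ref{theorem:GG16a} due to Ginzburg--G\"urel to the present setting, by replacing the toroidal (negative) monotonicity hypothesis with ordinary (negative) monotonicity together with $\alpha$-toroidal rationality, and to leverage the virtually abelian structure of $\pi_1(M)$ in the same way that it is used in the proof of Theorem \ref{theorem:main}. The key tool remains the filtered Floer--Novikov homology $\mathrm{HFN}^I(H;\alpha)$ for non-contractible periodic trajectories. First I would construct this homology in the (negative) monotone regime: monotonicity controls sphere bubbling via the constant $\lambda$, while the $\alpha$-toroidal rationality hypothesis ensures that the combined action shifts---coming from toroidal $\omega$-periods and, through monotonicity, from $\pi_2(M)$-periods---form a discrete subgroup of $\mathbb{R}$, so that filtrations are well-defined and spectral gaps are positive.

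The hypothesis $\chi(H,I;\alpha)\neq 0$ for every sufficiently small interval $I$ centered at some $s\in\widetilde{\mathrm{Spec}}(H;\alpha)$ isolates a one-periodic trajectory $x$ in class $\alpha$ whose local Floer--Novikov homology at augmented action $s$ has non-zero Euler characteristic. For a prime $p_i\in P_{q,\ell}$, I would then examine the iterate $H^{\natural p_i}$ together with its periodic trajectories in the class $\alpha^{p_i}$. Since $\gcd(q,\ell)=1$ and $p_i\equiv q\pmod{\ell}$, we have $\gcd(p_i,\ell)=1$, so $\gamma_\alpha^{p_i\ell}$ still lies in the finite-index abelian subgroup $A\subset\pi_1(M)$; this places us within the scope of Lemma \ref{lemma:key}, which controls the Novikov action shifts of all lifts of $x^{p_i}$. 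Standard local Floer persistence under prime iteration (as in \cite{Gu13,GG16a}) then propagates the non-vanishing Euler characteristic at action level $s$ to non-vanishing local Floer--Novikov homology at level $p_i s$. If no simple periodic trajectory in $\alpha^{p_i}$ of period $p_i$ or $p_{i+1}$ existed, then all Floer--Novikov generators near this level would have to be iterates of orbits from the finite set $\mathcal{P}_1(H;[\alpha])$; comparing contributions in the adjacent action windows associated with the consecutive primes $p_i$ and $p_{i+1}$, together with the positive spectral gap provided by $\alpha$-toroidal rationality, would then force a contradiction once $p_i$ is sufficiently large.

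The main obstacle, relative to Theorem \ref{theorem:main}, is that $\omega$ is no longer aspherical: Novikov lifts of iterated trajectories are shifted not only by toroidal $\omega$-periods but also by Chern/$\pi_2(M)$-periods, and the Conley--Zehnder grading is only defined modulo twice the minimal Chern number $2N$. The hard part will therefore be to verify that the iteration map on local Floer--Novikov homology remains an isomorphism for primes $p_i\in P_{q,\ell}$ in this mixed filtration/grading setting, so that the Euler-characteristic hypothesis is genuinely preserved under $p_i$-fold iteration. This requires combining Lemma \ref{lemma:key} with a careful Poincar\'e--Hopf style analysis that matches local homology contributions across different Novikov lifts, compatibly with the monotone grading modulo $2N$, and the book-keeping of the interaction between the arithmetic of $P_{q,\ell}$ and the Chern periods is the technical bottleneck I expect.
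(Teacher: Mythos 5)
Your outline follows the same route as the paper's proof (adapt the Ginzburg--G\"urel argument using a filtered Floer--Novikov homology, control Novikov recapping via Lemma \ref{lemma:key}, propagate the Euler-characteristic hypothesis to the $p_i$-th iterate, and squeeze between $H^{\natural p_i}$ and $H^{\natural p_{i+1}}$), but the step you defer as ``the technical bottleneck'' is exactly the step that has to be supplied, and your diagnosis of where the difficulty lies is off. The filtration that makes everything work is not the ordinary action filtration but the \emph{augmented} action $\widetilde{\mathcal{A}}_H=\mathcal{A}_H-\tfrac{\lambda}{2}\Delta_H$, whose recapping behaviour is $\widetilde{\mathcal{A}}_H(\bar{x}\#[v])=\widetilde{\mathcal{A}}_H(\bar{x})-\langle\overline{[\omega]}-\lambda\overline{c_1},[v]\rangle$. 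The single observation that unlocks the argument is that $[\omega]-\lambda c_1$ is \emph{aspherical} precisely because $(M,\omega)$ is monotone or negative monotone, and is still $\alpha$-toroidally rational; Lemma \ref{lemma:key} is then applied to this class (not to $[\omega]$, which is not aspherical here) to conclude that $[-p_ic,p_ic)\cap\widetilde{\mathrm{Spec}}(H^{\natural p_i};\alpha^{p_i})=\{0\}$ once $c$ is chosen so that $[-c,c]$ meets the finite set $\mathcal{S}$ of shifted augmented actions only in $0$. This also explains why the hypothesis requires $\chi(H,I;\alpha)\neq 0$ for every small interval \emph{centered at a spectrum point}: after normalizing that point to $0$, the collapse of the iterated spectrum happens only around $0$. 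Your proposal never isolates $[\omega]-\lambda c_1$ as the relevant class, and your worry about the Conley--Zehnder grading modulo $2c_1^{\min}$ is a non-issue: the Euler characteristic identity of Remark \ref{remark:chinonzero} only uses the parity of $\mu_{\mathrm{CZ}}$, and the propagation to the iterate is done by the Shub--Sullivan theorem on Poincar\'e--Hopf indices, not by an isomorphism of local Floer homology groups.

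Two further points are missing and would block a complete write-up. First, for $\chi(H,I;\alpha)$ and the chain groups $\widetilde{\mathrm{CFN}}^I$ to be finite-dimensional one must quotient the capped orbits by the subgroup of recappings $[v_0]^{n_w\nu}\#[w_0]^{n_v\xi}$ on which $\overline{[\omega]}-\lambda\overline{c_1}$ vanishes (the set $\overline{\mathcal{P}}_1'(H;\alpha)$); in the toroidally monotone case of \cite{GG16a} this is unnecessary because the augmented action is capping-independent, but here it is not. Second, the augmented-action filtration on $\mathrm{CFN}(H^{\natural p_i};\alpha^{p_i})$ is only defined when the gap condition $\gap(H^{\natural p_i};\alpha^{p_i})>c_0(M)$ holds or is replaced by a direct verification; the paper checks by an explicit mean-index/Conley--Zehnder estimate that any $\bar{y}'$ appearing in $\partial\bar{x}'$ with $\widetilde{\mathcal{A}}_{H^{\natural p_i}}(\bar{x}')=0$ satisfies $\widetilde{\mathcal{A}}_{H^{\natural p_i}}(\bar{y}')<p_ic$, hence $\leq 0$ by the spectrum collapse, using $p_ic>c_0(M)$ for large $p_i$. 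Without these two ingredients the object $\widetilde{\mathrm{HFN}}^{p_iI}(H^{\natural p_i};\alpha^{p_i})$ you want to show is non-zero is not yet defined.
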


If $\pi_1(M)$ is an $\mathrm{R}$-group,
then we can relax the condition on $\chi(H,I;\alpha)$ as follows:

\begin{theorem}\label{theorem:main2nilp}
Assume that $(M,\omega)$ is monotone or negative monotone with monotonicity constant $\lambda$ and $\pi_1(M)$ is an $\mathrm{R}$-group.
Let $H\colon S^1\times M\to\mathbb{R}$ be a Hamiltonian
such that $\mathcal{P}_1(H;\alpha)$ is finite,
and $\chi(H,I;\alpha)\neq 0$ for some interval $I=[a,b)$ with $a,b\in\mathbb{R}\setminus\widetilde{\mathrm{Spec}}(H;\alpha)$,
where $\alpha\in [S^1,M]$, $[\alpha]\neq 0$ in $H_1(M;\mathbb{Z})/\mathrm{Tor}$
and $\omega$ is $\alpha$-toroidally rational.
Then for every sufficiently large prime $p$, the Hamiltonian $H$ has a simple periodic orbit in the homotopy class $\alpha^p$
and with period either $p$ or its next prime $p'$.
\end{theorem}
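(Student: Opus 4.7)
The plan is to adapt the proof of Theorem \ref{theorem:main2} to the R-group setting, paralleling how the R-group refinement of Theorem \ref{theorem:main} relaxes the finiteness condition on $\mathcal{P}_1$ and the interval hypothesis. I argue by contradiction: suppose that for arbitrarily large primes $p$ the Hamiltonian $H$ has no simple periodic trajectory in the free homotopy class $\alpha^p$ with period $p$ or its next prime $p'$.

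First, I translate the assumption $\chi(H,I;\alpha)\neq 0$ for the interval $I=[a,b)$ with $a,b\notin\widetilde{\mathrm{Spec}}(H;\alpha)$ into the non-vanishing of the filtered Floer--Novikov homology $\mathrm{HFN}^I(H;\alpha)$. Under $\alpha$-toroidal rationality of $\omega$ and finiteness of $\mathcal{P}_1(H;\alpha)$, this Euler characteristic agrees, up to sign, with the alternating sum of ranks of $\mathrm{HFN}^I(H;\alpha)$, so the hypothesis forces $\mathrm{HFN}^I(H;\alpha)\neq 0$.

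Next, pass to the $p$-th iterate $H^{\natural p}$ and its action on the class $\alpha^p$. The R-group hypothesis, via Lemma \ref{lemma:keynilp}, ensures that $p$-th powers are injective on conjugacy classes in $\pi_1(M)$, so any $p$-periodic orbit of $\varphi_H$ in $\alpha^p$ which is not simple must be the $p$-th iterate of a one-periodic orbit lying in the \emph{same} class $\alpha$; the lemma also handles the shift ambiguity coming from lifts to the Novikov cover of the space of loops in $\alpha^p$. By the Ginzburg--G\"urel-type persistence of local Floer homology along admissible iterations, for all sufficiently large primes $p$ the local contribution of each iterated orbit to $\mathrm{HFN}^{pI}(H^{\natural p};\alpha^p)$ matches the original local contribution of the underlying one-periodic orbit, so $\mathrm{HFN}^{pI}(H^{\natural p};\alpha^p)\neq 0$.

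Under the contradiction hypothesis, every generator of $\mathrm{HFN}^{pI}(H^{\natural p};\alpha^p)$ is an iterate of an orbit of period strictly less than $p$; since $p$ is prime, this forces all such generators to come from one-periodic orbits of $H$ in $\alpha$. Combined with the monotonicity of $(M,\omega)$, which provides a Conley--Zehnder grading modulo $2N$ (where $N$ is the minimal Chern number), this should yield a counting contradiction once one accounts for how the grading shifts under iteration, thereby producing the sought simple periodic trajectory in $\alpha^p$ of period $p$ or $p'$. The principal obstacle, as in Theorem \ref{theorem:main2}, is aligning the monotonicity with the Novikov action shifts: unlike the aspherical setting of Theorem \ref{theorem:main}, each orbit now has infinitely many lifts whose actions differ by $\lambda\cdot\mathbb{Z}$, and one must arrange the interval $pI$ and the $2N$-periodic grading so that iterates at different Novikov levels do not spuriously cancel. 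This is precisely where Lemma \ref{lemma:keynilp} is decisive, replacing in the R-group setting the virtually-abelian Lemma \ref{lemma:key} used in Theorem \ref{theorem:main2}.
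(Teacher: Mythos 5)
Your proposal has the right overall shape---iterate $H$, show that the filtered Floer--Novikov homology over the window $pI$ is non-zero, and deduce the existence of orbits---but it contains two genuine gaps. First, you work throughout with the ordinary action-filtered homology $\mathrm{HFN}^I$, whereas $I$ is a window in the \emph{augmented} action spectrum and the Euler-characteristic identity of Remark \ref{remark:chinonzero} computes $\chi(H,I;\alpha)$ from the augmented-action filtered group $\widetilde{\mathrm{HFN}}^I(H;\alpha)$. In the monotone setting the augmented action is not automatically a filtration of the Floer--Novikov complex: by Proposition \ref{proposition:augmented} one needs the gap condition $\gap(H^{\natural p};\alpha^p)>c_0(M)$ before $\widetilde{\mathrm{HFN}}^{pI}(H^{\natural p};\alpha^p)$ is even defined. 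Verifying this is where Lemma \ref{lemma:keynilp} actually enters: it is applied to the class $[\omega]-\lambda c_1$ (aspherical by monotonicity), giving $\widetilde{\mathrm{Spec}}(H^{\natural p};\alpha^p)=p\,\widetilde{\mathrm{Spec}}(H;\alpha)$ and hence $\gap(H^{\natural p};\alpha^p)=p\gap(H;\alpha)>c_0(M)$ for large $p$. Your proposal never checks the gap condition, and it attributes to Lemma \ref{lemma:keynilp} the uniqueness of $p$-th roots, which is really Proposition \ref{proposition:R}. (Also, non-vanishing of $\widetilde{\mathrm{HFN}}^{pI}$ comes from $\chi(H^{\natural p},pI;\alpha^p)=\chi(H,I;\alpha)\neq 0$ via the Shub--Sullivan invariance of the Poincar\'e--Hopf index, not from persistence of local Floer homology alone, which could cancel in the total homology.)

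Second, the concluding step is missing. Non-vanishing of $\widetilde{\mathrm{HFN}}^{pI}(H^{\natural p};\alpha^p)$ with all generators being $p$-th iterates is not by itself a contradiction---it is exactly what happens when no simple $p$-periodic orbit exists---and there is no ``counting contradiction'' modulo $2N$ to be extracted from it. The orbit of period $p$ or $p'$ is produced by a continuation argument: assuming in addition that $H$ has no $p'$-periodic orbit in $\alpha^p$, one gets $\gap(H^{\natural p'};\alpha^p)=\infty$ and the corresponding filtered homology vanishes, while the two linear-homotopy continuation maps $H^{\natural p}\to H^{\natural p'}\to H^{\natural p}$ of Proposition \ref{proposition:augmented3} factor the quotient-inclusion isomorphism
\[
	\widetilde{\mathrm{HFN}}^{pJ}(H^{\natural p};\alpha^p)\longrightarrow\widetilde{\mathrm{HFN}}^{pJ+a_++a_-}(H^{\natural p};\alpha^p)
\]
for a slightly shrunken window $J\subset I$ absorbing the shifts $a_{\pm}=(p'-p)e_{\pm}+\lvert\lambda\rvert n$, which are $o(p)$ by the prime-gap estimate. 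This forces $\widetilde{\mathrm{HFN}}^{pJ+a_+}(H^{\natural p'};\alpha^p)\neq 0$, a contradiction; simplicity of the resulting $p'$-periodic orbit then follows because $p[\alpha]/p'$ is not an integral class. Without this step your argument does not produce the claimed trajectory.
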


As in Theorem \ref{theorem:main}, one can show that
for any pair of coprime positive integers $(q_{\alpha},\ell_{\alpha})$ and every sufficiently large prime $p_i\in P_{q_{\alpha},\ell_{\alpha}}$,
the Hamiltonian $H$ has a simple periodic orbit in the homotopy class $\alpha^{p_i}$
and with period either $p_i$ or $p_{i+1}$.

For the proof, we review the augmented action filtration on the Floer--Novikov homology $\widetilde{\mathrm{HFN}}^I(H;\alpha)$ introduced in \cite{GG09,GG16}.
We note that if $(M,\omega)$ is toroidally monotone or toroidally negative monotone as in \cite{GG16},
then the augmented action does not depend on the choice of the capping.
However in our setting, it does.

\subsection{Examples}

One important example for Theorem \ref{theorem:main} is the tori $\mathbb{T}^{2n}$ with the standard symplectic form.
This Theorem \ref{theorem:main} generalizes \cite[Theorem 1.1]{Or}.
Even when $\pi_1(M)$ is just finitely generated abelian,
we have numerous examples due to the following theorem.

\begin{theorem}[{\cite[Theorem 1.2]{KRT}}]
Let $G$ be a finitely generated abelian group.
Then there exists a closed symplectic manifold $(M,\omega)$ with aspherical $\omega$ such that $\pi_1(M)=G$
if and only if
either $G\cong\mathbb{Z}\oplus\mathbb{Z}$ or $\rank{G}\geq 4$.
\end{theorem}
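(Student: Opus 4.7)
The plan is to handle the biconditional by separating constructions from obstructions.

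For the ``if'' direction, I would give explicit constructions. When $G \cong \mathbb{Z}\oplus\mathbb{Z}$, the standard torus $(\mathbb{T}^2,dx\wedge dy)$ does the job, since $\pi_2(\mathbb{T}^2)=0$ renders asphericity automatic. When $\rank(G)\geq 4$, I write $G = \mathbb{Z}^k\oplus T$ with $k\geq 4$ and $T$ a finite abelian group; I would first build a closed symplectic $4$-manifold $(N,\omega_N)$ with $\pi_1(N)\cong T$ and $\omega_N$ aspherical by Gompf-style symplectic summing along surfaces (the gluing realises arbitrary finitely presented $\pi_1$, while a careful choice of summands keeps $\omega_N$ aspherical), and then form the product $(\mathbb{T}^k\times N,\omega_{\mathrm{std}}\oplus\omega_N)$. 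Asphericity is preserved because $\pi_2$ of a product splits and $[\omega]$ pairs trivially on each factor; for $k$ odd I would substitute one $\mathbb{T}^2$ factor by a symplectic $6$-manifold of rank $3$ to keep the total dimension even.

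For the ``only if'' direction, I would argue by obstructions. If $G$ is purely torsion then the universal cover $\widetilde M$ is a closed simply connected symplectic manifold and $\widetilde\omega=\pi^*\omega$ remains aspherical; but Hurewicz provides a surjection $\pi_2(\widetilde M)\otimes\mathbb{R}\twoheadrightarrow H_2(\widetilde M;\mathbb{R})$, forcing $[\widetilde\omega]=0\in H^2(\widetilde M;\mathbb{R})$ and contradicting $[\widetilde\omega]^n\neq 0$. This already rules out $\rank(G)=0$. To refine the conclusion to $\rank(G)=2$ torsion-free or $\rank(G)\geq 4$, I would analyse finite covers $M_k\to M$ corresponding to the characteristic subgroups $k\pi_1(M)$ and combine Lefschetz-type cup-product inequalities with Hirsch-type lower bounds on $b_1$ for symplectically aspherical manifolds to eliminate the cases $\rank(G)\in\{1,3\}$ and $\rank(G)=2$ with nontrivial torsion.

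The main obstacle is the sharpness of the numerical thresholds in the ``only if'' half: ruling out $\rank(G)=3$ and forbidding torsion at rank two are precisely the delicate inequalities that make the theorem nontrivial. I anticipate needing the rational-homotopy and formality-type obstructions for symplectically aspherical manifolds (in the spirit of Ibañez--K\k{e}dra--Rudyak--Tralle), together with explicit Cartan--Leray spectral sequence computations on the $\mathbb{Z}^r$-torus cover, to pin down which $\pi_1$ actually occur. The constructive side, by contrast, is largely routine once the torsion-only building block $(N,\omega_N)$ is in hand.
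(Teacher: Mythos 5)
The paper does not prove this statement at all; it quotes it verbatim from K\c{e}dra--Rudyak--Tralle [KRT] as a source of examples, so there is no internal proof to compare against. Judged on its own terms, your proposal has a fatal flaw in the ``if'' direction. Your building block --- a closed symplectic $4$-manifold $(N,\omega_N)$ with $\pi_1(N)\cong T$ finite nontrivial and $\omega_N$ aspherical --- cannot exist, by exactly the obstruction you yourself invoke in the ``only if'' paragraph: the universal cover of $N$ would be a closed simply connected symplectic manifold carrying an aspherical symplectic form, which is impossible because Hurewicz forces $[\widetilde\omega]=0$ while $[\widetilde\omega]^2\neq 0$. The same objection kills your parity fix: a closed symplectic $6$-manifold with fundamental group $\mathbb{Z}^3$ and aspherical form is precisely one of the cases the theorem excludes. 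So the constructive half collapses for every $G$ with nontrivial torsion and for every odd $k$; realizing $\mathbb{Z}^k\oplus T$ with $k\geq 4$ genuinely requires the surgery constructions of [KRT] (Gompf sums performed on a rank-$\geq 4$ symplectically aspherical manifold so that the torsion is introduced without ever being isolated in a direct factor), not a product with a torsion-$\pi_1$ factor.

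The ``only if'' direction is also under-argued exactly where the theorem is nontrivial. Your Hurewicz argument correctly excludes finite $G$, but excluding $\rank G\in\{1,3\}$ and rank-two groups with torsion is the heart of the matter, and ``Lefschetz-type inequalities'' plus ``Cartan--Leray computations'' is a placeholder, not a proof. The actual mechanism is the asphericity criterion quoted in this paper as Proposition~\ref{proposition:iff} and Corollary~\ref{corollary:iff}: asphericity of $\omega$ gives $[\omega]=f^{\ast}\Omega$ for the classifying map $f\colon M\to K(G,1)$, and since $H^{\ast}(K(G,1);\mathbb{R})\cong\Lambda^{\ast}\mathbb{R}^{r}$ with $r=\rank G$ (torsion dies over $\mathbb{R}$), the nonvanishing of $[\omega]^n$ forces $\Omega^n\neq 0$ in $\Lambda^{2n}\mathbb{R}^{r}$, hence $r\geq 2n=\dim M$. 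Thus either $\dim M\geq 4$ and $r\geq 4$, or $\dim M=2$ and $M$ is a closed surface with infinite abelian fundamental group, i.e.\ $M=\mathbb{T}^2$ and $G\cong\mathbb{Z}\oplus\mathbb{Z}$ exactly, with no torsion. This single inequality disposes of ranks $1$ and $3$ and of rank $2$ with torsion simultaneously; your proposal never identifies it.
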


Another interesting example is the Kodaira--Thurston manifold $\mathrm{KT}$,
which is the product of the circle and the Heisenberg manifold.
Namely,
\[
	\mathrm{KT}=S^1\times (H(\mathbb{R})/H(\mathbb{Z})),
\]
where $H(R)$ denotes the set of the upper triangular unipotent $3\times 3$ matrices with coefficients in a given ring $R$.
The fundamental group $\pi_1(\mathrm{KT})$ is isomorphic to $\mathbb{Z}\times H(\mathbb{Z})$,
and hence it is torsion-free nilpotent, in particular, an $\mathrm{R}$-group.
We note that $\mathrm{KT}$ naturally admits an aspherical symplectic form.

On the $\alpha$-rationality condition on $\omega$ in Theorem \ref{theorem:main},
we have the following.

\begin{proposition}[{\cite[Proposition 1.5]{IKRT}}]
Let $M$ be a closed symplectic manifold equipped with an aspherical symplectic form.
Then $M$ admits an aspherical symplectic form $\omega$ such that $\langle[\omega],a\rangle\in\mathbb{Z}$ for all $a\in H_2(M;\mathbb{Z})$.
\end{proposition}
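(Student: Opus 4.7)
The plan is to perturb the cohomology class of the given form within the subspace of aspherical classes to a rational class, and then rescale to obtain an integer class. Denote the given aspherical symplectic form by $\omega_0$ and define
\[
	V=\{\,c\in H^2(M;\mathbb{R})\mid \langle c,h(s)\rangle=0\text{ for every }s\in\pi_2(M)\,\},
\]
where $h\colon\pi_2(M)\to H_2(M;\mathbb{Z})$ is the Hurewicz homomorphism. Since $M$ is closed, $H_2(M;\mathbb{Z})$ is finitely generated, and hence so is the image of $h$. Therefore $V$ is cut out by finitely many linear equations with integer coefficients, making it a rational subspace of $H^2(M;\mathbb{R})$. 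In particular $V\cap H^2(M;\mathbb{Q})$ is dense in $V$, so $[\omega_0]\in V$ can be approximated arbitrarily well by rational classes $c\in V\cap H^2(M;\mathbb{Q})$.

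Next, I would realize such a $c$ by a symplectic form. Fix a Riemannian metric on $M$ and invoke Hodge theory: the class $c-[\omega_0]$ has a unique harmonic representative $\eta$, and the map sending a cohomology class to its harmonic representative is a bounded linear map into the finite-dimensional space of harmonic $2$-forms. Consequently, the $C^0$-norm of $\eta$ can be made as small as desired by choosing $c$ close to $[\omega_0]$. Set $\omega:=\omega_0+\eta$; then $\omega$ is closed with $[\omega]=c\in V$, so $\omega$ is aspherical. Since nondegeneracy of closed $2$-forms is a $C^0$-open condition, $\omega$ is symplectic provided $c$ is chosen sufficiently close to $[\omega_0]$.

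Finally, I would rescale: since $c\in H^2(M;\mathbb{Q})$, there is a positive integer $N$ with $Nc\in H^2(M;\mathbb{Z})$, and the form $N\omega$ is symplectic, aspherical, and satisfies $\langle[N\omega],a\rangle\in\mathbb{Z}$ for every $a\in H_2(M;\mathbb{Z})$, which is the desired form. The main conceptual point is the rationality of $V$, which follows from the image of $h$ being a finitely generated $\mathbb{Z}$-submodule; the perturbation and rescaling steps are standard, so I do not expect any serious obstacle along the way.
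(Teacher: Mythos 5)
Your argument is correct and is essentially the standard proof of this fact; the paper itself gives no proof, citing it directly from \cite{IKRT}, and the argument there proceeds along the same lines. The key points are all in order: the aspherical classes form a rational linear subspace of $H^2(M;\mathbb{R})$ because the image of the Hurewicz map is a finitely generated subgroup of $H_2(M;\mathbb{Z})$, rational classes are dense in that subspace, a $C^0$-small closed perturbation (e.g.\ via harmonic representatives) preserves nondegeneracy on a compact manifold, and rescaling by a common denominator yields integrality of $\langle[\omega],a\rangle$ for all $a\in H_2(M;\mathbb{Z})$.
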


Let us now discuss examples for Theorems \ref{theorem:main2} and \ref{theorem:main2nilp}.
Let $(N,\omega_N)$ be a connected closed symplectically aspherical
(i.e., $\omega_N$ and $c_1=c_1(N,\omega_N)$ are both aspherical) symplectic manifold
whose fundamental group is a virtually abelian group or an $\mathrm{R}$-group
(e.g., $N=\mathbb{T}^{2n}$, $\mathrm{KT}$).
Then the product $(N\times\mathbb{C}P^m,\omega_N\oplus\omega_{\mathrm{FS}})$
of $(N,\omega_N)$ and the complex projective space $\mathbb{C}P^m$ equipped with the Fubini--Study form $\omega_{\mathrm{FS}}$
satisfies the assumptions of Theorem \ref{theorem:main2} or \ref{theorem:main2nilp}.


\section{Preliminaries}

In this section, first we set conventions and notation.
Then we define the filtered Floer--Novikov homology which is the main tool for the proof of the main theorems.

\subsection{Conventions and notation}\label{subsection:conventions}

Let $X$ be a connected CW-complex.
Let $\mathcal{L}X=C(S^1,X)$ be the space of free loops in $X$ where $S^1=\mathbb{R}/\mathbb{Z}$.
For a free homotopy class $\alpha\in [S^1,X]$,
denote by $\mathcal{L}_{\alpha}X$ the component of $\mathcal{L}X$ with loops representing $\alpha$.
We choose a loop $z_{\alpha}\in\mathcal{L}X$ whose free homotopy class is $\alpha$.

Every element of $\pi_1(\mathcal{L}_{\alpha}X,z_{\alpha})$ is represented by
a map $v\colon S^1\times S^1\to X$ such that
$v|_{\{0\}\times S^1}=v|_{\{1\}\times S^1}=z_{\alpha}$.
We denote by $[S^1\times S^1]\in H_2(S^1\times S^1;\mathbb{Z})\cong\mathbb{Z}$ the fundamental class of $S^1\times S^1$.
We define a homomorphism
\[
	f\colon\pi_1(\mathcal{L}_{\alpha}X,z_{\alpha})\to H_2(X;\mathbb{Z})
\]
by $f([v])=v_{\ast}([S^1\times S^1])$,
where $v_{\ast}\colon H_2(S^1\times S^1;\mathbb{Z})\to H_2(X;\mathbb{Z})$.
Then a cohomology class $u\in H^2(X;R)$ defines a cohomology class
\[
	\overline{u}\in H^1(\mathcal{L}X;R)=%
	\mathrm{Hom}(H_1(\mathcal{L}X;\mathbb{Z}),R)=%
	\mathrm{Hom}(\pi_1(\mathcal{L}_{\alpha}X,z_{\alpha}),R)
\]
by the formula $\overline{u}=u\circ f$, where $R=\mathbb{R}$ or $\mathbb{Z}$.

A cohomology class $u\in H^2(X;R)$ is called \textit{aspherical} if $u$ vanishes on $\pi_2(X)$.
Similarly, a cohomology class $u\in H^2(X;R)$ is called \textit{atoroidal} if the cohomology class $\overline{u}$ vanishes
on $\pi_1(\mathcal{L}_{\alpha}X,z_{\alpha})$ for any $\alpha\in [S^1,X]$.
We note that every atoroidal class is aspherical.

A cohomology class $u\in H^2(X;R)$ is called \textit{$\alpha$-toroidally rational} if the set
$\langle \overline{u},\pi_1(\mathcal{L}_{\alpha}X,z_{\alpha})\rangle$ is discrete in $\mathbb{R}$.
Namely, if $u$ is $\alpha$-toroidally rational, then there exists a number $h_{\alpha}\in\mathbb{R}$ such that
\[
	\langle \overline{u},\pi_1(\mathcal{L}_{\alpha}X,z_{\alpha})\rangle=%
	h_{\alpha}\mathbb{Z}.
\]

Let $(M,\omega)$ be a connected closed symplectic manifold.
We call a closed 2-form $\eta\in\Omega^2(M)$ \textit{aspherical} (resp.\ \textit{atoroidal}, \textit{$\alpha$-toroidally rational})
if its cohomology class $[\eta]$ is aspherical (resp.\ atoroidal, $\alpha$-toroidally rational).

As is explained above, the symplectic form $\omega\in \Omega^2(M)$ and the first Chern class $c_1\in H^2(M;\mathbb{Z})$ of $(M,\omega)$ define the cohomology classes
\[
	\overline{[\omega]}\in H^1(\mathcal{L}M;\mathbb{R})=\mathrm{Hom}(H_1(\mathcal{L}M;\mathbb{Z}),\mathbb{R})
\]
and
\[
	\overline{c_1}\in H^1(\mathcal{L}M;\mathbb{Z})=\mathrm{Hom}(H_1(\mathcal{L}M;\mathbb{Z}),\mathbb{Z}),
\]
respectively.
A symplectic manifold $(M,\omega)$ is called \textit{monotone} (resp.\ \textit{negative monotone}) if we have
\[
	[\omega]|_{\pi_2(M)}=%
	\lambda c_1|_{\pi_2(M)}
\]
for some non-negative (resp.\ negative) number $\lambda\in\mathbb{R}$.
Similarly, a symplectic manifold $(M,\omega)$ is called \textit{toroidally monotone} (resp.\ \textit{toroidally negative monotone})
if there exists a non-negative (resp.\ negative) number $\lambda\in\mathbb{R}$ such that for all $\alpha\in [S^1,M]$,
\[
	\overline{[\omega]}|_{\pi_1(\mathcal{L}_{\alpha}M,z_{\alpha})}=%
	\lambda\overline{c_1}|_{\pi_1(\mathcal{L}_{\alpha}M,z_{\alpha})}
\]
holds.
We note that every toroidally monotone (resp.\ toroidally negative monotone) symplectic manifold is monotone (resp.\ negative monotone).

We note that every atoroidal symplectic form is $\alpha$-toroidally rational with $h_{\alpha}=0$ for any $\alpha\in [S^1,M]$.
Moreover, every toroidally monotone or toroidally negative monotone symplectic form is an $\alpha$-toroidally rational symplectic form
with $h_{\alpha}=\lambda c_{1,\alpha}^{\min}$ for any $\alpha\in [S^1,M]$,
where $c_{1,\alpha}^{\min}\in\mathbb{N}$ is the \textit{$\alpha$-minimal first Chern number} given by
\[
	\langle \overline{c_1},\pi_1(\mathcal{L}_{\alpha}M,z_{\alpha})\rangle=%
	c_{1,\alpha}^{\min}\mathbb{Z}.
\]
Similarly, the \textit{minimal first Chern number} $c_1^{\mathrm{min}}\in\mathbb{N}$ is given by
\[
	\langle c_1,\pi_2(M)\rangle=%
	c_1^{\min}\mathbb{Z}.
\]
We note that $c_{1,\alpha}^{\min}$ divides $c_1^{\min}$.

In the present paper, we assume that all Hamiltonians $H$ are one-periodic in time, i.e., $H\colon S^1\times M\to\mathbb{R}$,
and we set $H_t=H(t,\cdot)$ for $t\in S^1=\mathbb{R}/\mathbb{Z}$.
The \textit{Hamiltonian vector field} $X_{H_t}\in \mathfrak{X}(M)$ associated to $H_t$ is defined by
\[
	\iota_{X_{H_t}}\omega=-dH_t.
\]
The \textit{Hamiltonian isotopy} $\{\varphi_H^t\}_{t\in\mathbb{R}}$ associated to $H$ is defined by
\[
	\begin{cases}
		\varphi_H^0=\mathrm{id},\\
		\frac{d}{dt}\varphi_H^t=X_{H_t}\circ\varphi_H^t\quad \text{for all}\ t\in\mathbb{R},
	\end{cases}
\]
and its time-one map $\varphi_H=\varphi_H^1$ is referred to as the \textit{Hamiltonian diffeomorphism} generated by $H$.
For $k\in\mathbb{N}$, let $\mathcal{P}_k(H;\alpha)$ be the set of $k$-periodic (i.e., defined on $\mathbb{R}/k\mathbb{Z}$) orbits
of the Hamiltonian isotopy $\{\varphi_H^t\}_{t\in\mathbb{R}}$ representing $\alpha$.
A one-periodic orbit $x\in\mathcal{P}_1(H;\alpha)$ is called \textit{non-degenerate}
if it satisfies $\det\bigl((d\varphi_H)_{x(0)}-\mathrm{id}\bigr)\neq 0$.
Moreover, $H$ is said to be \textit{$\alpha$-regular}
if all one-periodic orbits of $H$ representing $\alpha$ are non-degenerate.

Let $K$ and $H$ be two one-periodic Hamiltonians. The composition $K\natural H$ is defined by
\[
	(K\natural H)_t=K_t+H_t\circ (\varphi_K^t)^{-1}.
\]
Then the isotopy defined by $K\natural H$ coincides with $\varphi_K^t\circ\varphi_H^t$.
For $k\in\mathbb{N}$, we set $H^{\natural k}=H\natural\cdots\natural H$ ($k$ times).
We denote by $x^k$ the $k$-th iteration of a one-periodic orbit $x$ of $H$.
To be more precise, $x^k$ is the $k$-periodic orbit $x\colon\mathbb{R}/k\mathbb{Z}\to M$ of $H$.
Since there is an action-preserving and mean index-preserving one-to-one correspondence between
the set of $k$-periodic orbits of $H$ and
the set of one-periodic orbits of $H^{\natural k}$,
we can think of $x^k$ as the one-periodic orbit of $H^{\natural k}$ later.


\subsection{Floer--Novikov homology}\label{subsection:floernovikov}

In this subsection, we define the Floer--Novikov homology for non-contractible periodic orbits (see, e.g., \cite{On,Or} for details).
Let $(M,\omega)$ be a $2n$-dimensional connected closed symplectic manifold.
For simplicity, we assume that $(M,\omega)$ is \textit{weakly monotone}, i.e.,
$M$ satisfies one of the following conditions; $M$ is monotone, or $c_1$ is aspherical, or $c_1^{\mathrm{min}}\geq n-2$.
We note that the Floer--Novikov homology can be defined for general symplectic manifolds due to the technique of virtual cycles \cite{FO,LT,Pa}.

Let $H\colon S^1\times M\to\mathbb{R}$ be a Hamiltonian.
For a free homotopy class $\alpha\in [S^1,M]$,
we fix a reference loop $z_{\alpha}\in\alpha$.

\subsubsection{Action functional}

We consider the universal covering space $\widetilde{\mathcal{L}_{\alpha}M}$ of $\mathcal{L}_{\alpha}M$
and define the covering space $\pi\colon\overline{\mathcal{L}_{\alpha}M}\to\mathcal{L}_{\alpha}M$ with fiber being the group
\[
	\Gamma_{\alpha}=\frac{\pi_1(\mathcal{L}_{\alpha}M,z_{\alpha})}{\Ker\overline{[\omega]}\cap\Ker\overline{c_1}}.
\]
We consider the set of pairs $(x,\Pi)$, where $x\in\mathcal{L}_{\alpha}M$
and $\Pi\colon [0,1]\times S^1\to M$ is a path in $\mathcal{L}_{\alpha}M$ joining $z_{\alpha}$ and $x$.
We set an equivalence relation $\sim$ by defining $(x_1,\Pi_1)\sim (x_2,\Pi_2)$ if and only if $x_1=x_2$,
$\langle\overline{[\omega]},\Pi_1\#(-\Pi_2)\rangle=0$
and $\langle\overline{c_1},\Pi_1\#(-\Pi_2)\rangle=0$,
where $\Pi_1\#(-\Pi_2)$ is the loop defined by the path $\Pi_1$ and the path $-\Pi_2$,
which can be seen as a toroidal 2-cycle obtained by gluing $\Pi_1$ and $\Pi_2$ with orientation reversed along the boundaries.
Then the space $\overline{\mathcal{L}_{\alpha}M}$ can be viewed as the set of such equivalence classes $[x,\Pi]$.

We define the \textit{action functional} $\mathcal{A}_H\colon \overline{\mathcal{L}_{\alpha}M}\to \mathbb{R}$ by
\[
	\mathcal{A}_H([x,\Pi])=-\int_{[0,1]\times S^1} \Pi^{\ast}\omega +\int_{0}^{1}H_t\bigl(x(t)\bigr)\, dt.
\]
Since $\pi^{\ast}\overline{[\omega]}=0\in H^1(\overline{\mathcal{L}_{\alpha}M};\mathbb{R})$,
the action functional $\mathcal{A}_H$ is well-defined.
Here we note that the critical point set $\Cr(\mathcal{A}_H)$ is equal to $\overline{\mathcal{P}}_1(H;\alpha)=\pi^{-1}\bigl(\mathcal{P}_1(H;\alpha)\bigr)$.

We fix a symplectic trivialization of $TM$ along the reference loop $z_{\alpha}$.
Then one can associate the \textit{mean index} $\Delta_H(\bar{x})$
to a capped one-periodic orbit $\bar{x}=[x,\Pi]\in\overline{\mathcal{L}_{\alpha}M}$ as follows.
By extending the trivialization of $TM|_{z_{\alpha}}$ to the capping $\Pi$, we obtain a trivialization of $TM|_x$.
Thus we get a path $t\mapsto (d\varphi_H^t)_{x(0)}$ in the group $\mathrm{Sp}(2n)$.
Now we define the mean index $\Delta_H(\bar{x})$ to be the mean index of the resulting path (see, e.g., \cite{SZ}).
Similarly, if $x$ is non-degenerate, we can define the \textit{Conley--Zehnder index} $\mu_{\mathrm{CZ}}(H,\bar{x})$ of $\bar{x}$.
We note that the above two indices have the relation
\[
	\lvert\Delta_H(\bar{x})-\mu_{\mathrm{CZ}}(H,\bar{x})\rvert\leq n,
\]
where $\dim M=2n$.
The inequality is strict when $x$ is non-degenerate.

Assume that all iterated homotopy classes $\alpha^k$, $k\in\mathbb{N}$, are distinct and non-trivial.
We choose the iterated loop $z_{\alpha}^k$ with the iterated trivialization as the reference loop for $\alpha^k$.
Then the action functional $\mathcal{A}_H$ and the mean index $\Delta_H$ are homogeneous with respect to iterations in the sense that
\[
	\mathcal{A}_{H^{\natural k}}([x,\Pi]^k)=k\mathcal{A}_H([x,\Pi])
	\quad \text{and}\quad
	\Delta_{H^{\natural k}}([x,\Pi]^k)=k\Delta_H([x,\Pi]),
\]
where $[x,\Pi]^k=[x^k,\Pi^k]$ is the $k$-th iteration of $[x,\Pi]$.
Here we think of the iterated loop $x^k$ as the loop defined on $S^1=\mathbb{R}/\mathbb{Z}$,
where $x^k$ defined on $\mathbb{R}/k\mathbb{Z}$ and on $\mathbb{R}/\mathbb{Z}$
have the same action and mean index (see, e.g., \cite[Subsection 2.1]{GG10}).
Moreover, for any $\bar{x}\in\overline{\mathcal{P}}_1(H;\alpha)$ and any $[v]\in\pi_1(\mathcal{L}_{\alpha}M,z_{\alpha})$ the equalities
\[
	\mathcal{A}_H(\bar{x}\#[v])=\mathcal{A}_H(\bar{x})-\langle\overline{[\omega]},[v]\rangle,
	\quad \Delta_H(\bar{x}\#[v])=\Delta_H(\bar{x})-2\langle\overline{c_1},[v]\rangle
\]
and
\[
	\mu_{\mathrm{CZ}}(H,\bar{x}\#[v])=\mu_{\mathrm{CZ}}(H,\bar{x})-2\langle\overline{c_1},[v]\rangle
	\quad \text{(if $x$ is non-degenerate)}
\]
hold (see, e.g., \cite[Section 2]{Ba}).
We define the \textit{action spectrum} of $\mathcal{A}_H$ by
\[
	\mathrm{Spec}(H;\alpha)=\mathcal{A}_H\bigl(\overline{\mathcal{P}}_1(H;\alpha)\bigr).
\]


\subsubsection{The filtered Floer--Novikov chain complex}

We assume that $H$ is $\alpha$-regular.
Let $J\in\mathcal{J}(M,\omega)$ be an $\omega$-compatible almost complex structure.
We consider the Floer differential equation
\begin{equation}\label{eq:1}
	\partial_s u+J(u)\bigl(\partial_t u-X_{H_t}(u)\bigr)=0
\end{equation}
for $u\colon\mathbb{R}\times S^1\to M$ where $(s,t)\in\mathbb{R}\times S^1$.
For a smooth solution $u\colon \mathbb{R}\times S^1\to M$ to \eqref{eq:1}, we define the energy by the formula
\[
	E(u)=\int_0^1\int_{-\infty}^{\infty}\lvert\partial_s u\rvert^2\,dsdt.
\]
Then we have the following:

\begin{lemma}[\cite{Sa}]\label{lemma:energy}
Let $u\colon \mathbb{R}\times S^1\to M$ be a smooth solution to \eqref{eq:1} with finite energy.
\begin{enumerate}
	\item There exist $\bar{x}^{\pm}\in\overline{\mathcal{P}}_1(H;\alpha)$ such that
		\[
			\lim_{s\to\pm\infty}u(s,t)=x^{\pm}(t)\quad \text{and}\quad \lim_{s\to\pm\infty}\partial_s u(s,t)=0,
		\]
		where $\bar{x}^+=[x^+,\Pi^+]$ and $\bar{x}^-=[x^-,\Pi^-]$, and both limits are uniform in the $t$-variable.
		Moreover, we have
		\[
			[x^+,\Pi^-\# u]=[x^+,\Pi^+]\in\widetilde{\mathcal{L}_{\alpha}M}.
		\]
	\item The energy identity holds:
		\[
			E(u)=\mathcal{A}_H(\bar{x}^-)-\mathcal{A}_H(\bar{x}^+).
		\]
\end{enumerate}
\end{lemma}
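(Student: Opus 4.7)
The plan is to prove (i) by the standard asymptotic analysis for finite-energy Floer cylinders and (ii) by a direct pointwise computation based on the Floer equation \eqref{eq:1}.

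For (i), I would first establish a uniform bound on $\lvert\partial_s u\rvert$. If this bound fails, rescaling $u$ at a sequence of points where $\lvert\partial_s u\rvert\to\infty$ produces a nonconstant $J$-holomorphic sphere in $M$ (the Hamiltonian term scales away), and the fact that such bubbles cannot obstruct compactness on a finite-energy cylinder in a general closed $(M,\omega)$ is exactly what the virtual-techniques machinery of \cite{FO,On} provides. Granted the gradient bound, $E(u)<\infty$ forces $\int_{\lvert s\rvert\ge R}\lvert\partial_s u\rvert^2\,ds\,dt\to 0$ as $R\to\infty$, so by standard elliptic bootstrapping applied to the translations $u(s_n+\,\cdot\,,\,\cdot\,)$ one extracts $C^\infty$-subsequential limits along any $s_n\to\pm\infty$, yielding loops $x^\pm\colon S^1\to M$ satisfying $\partial_t x^\pm=X_{H_t}(x^\pm)$ and hence $x^\pm\in\mathcal{P}_1(H;\alpha)$. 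Uniqueness of the limits follows from monotonicity of $s\mapsto\mathcal{A}_H\bigl([u(s,\cdot),\Pi^-\# u|_{(-\infty,s]\times S^1}]\bigr)$ (which is immediate from the truncated version of (ii), valid without knowing full convergence), combined with the finite total variation $E(u)$ and the fact that the limiting action lies in $\mathrm{Spec}(H;\alpha)$ together with local rigidity of one-periodic orbits near this level. The asserted capping identity is arranged by fixing any $\Pi^-$ for $x^-$ and setting $\Pi^+:=\Pi^-\# u$, where $u$ is viewed as a path in $\mathcal{L}_\alpha M$ from $x^-$ to $x^+$; the equality $[x^+,\Pi^-\# u]=[x^+,\Pi^+]$ in $\widetilde{\mathcal{L}_\alpha M}$ is then a definition.

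For (ii), the computation is pointwise. Using $\partial_s u=-J(u)\bigl(\partial_t u-X_{H_t}(u)\bigr)$ and $\iota_{X_{H_t}}\omega=-dH_t$, I obtain
\[
	\lvert\partial_s u\rvert^2
	=\omega\bigl(\partial_s u,J(u)\partial_s u\bigr)
	=\omega\bigl(\partial_s u,\partial_t u-X_{H_t}(u)\bigr)
	=\omega(\partial_s u,\partial_t u)-\partial_s\bigl(H_t(u)\bigr),
\]
where the last equality uses $\omega(\partial_s u,X_{H_t}(u))=dH_t(\partial_s u)=\partial_s(H_t\circ u)$. Integrating over $\mathbb{R}\times S^1$ and evaluating the boundary term using the uniform asymptotic convergence from (i) gives
\[
	E(u)=\int_{\mathbb{R}\times S^1}u^{\ast}\omega-\int_{S^1}\bigl[H_t(x^+(t))-H_t(x^-(t))\bigr]\,dt.
\]
Since $\Pi^+=\Pi^-\# u$ as cappings, additivity of $\omega$-area yields $\int u^{\ast}\omega=\int(\Pi^+)^{\ast}\omega-\int(\Pi^-)^{\ast}\omega$, and substituting into the definition $\mathcal{A}_H([x,\Pi])=-\int\Pi^{\ast}\omega+\int H_t(x)\,dt$ produces $E(u)=\mathcal{A}_H(\bar{x}^-)-\mathcal{A}_H(\bar{x}^+)$.

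The main obstacle is the gradient bound on a closed symplectic manifold without any monotonicity hypothesis: sphere bubbling must be ruled out or absorbed in full generality, which is precisely why \cite{FO,On} is cited immediately before the statement. Once the gradient is controlled and the asymptotic loops are well defined, the remaining steps — identifying and uniquely labelling the asymptotic capped orbits, and the pointwise energy computation — are routine Floer-theoretic bookkeeping.
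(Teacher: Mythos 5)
The paper itself offers no proof of this lemma --- it is quoted from \cite{Sa} --- and your sketch follows the standard argument given there, so the overall strategy is right and the computation in (ii), as well as the observation that the capping identity is simply the choice $\Pi^+:=\Pi^-\#u$, are correct. Two points in (i) need repair. First, your appeal to \cite{FO,On} for the gradient bound is misplaced: virtual techniques address transversality and the construction of the boundary operator for moduli spaces, not the exclusion of bubbling on a single fixed finite-energy cylinder, and as written the sentence ``such bubbles cannot obstruct compactness \dots is exactly what \cite{FO,On} provides'' is not a valid step. The correct and elementary mechanism is that $E(u)<\infty$ forces the energy of $u$ on $\{\lvert s\rvert\geq R\}$ to tend to zero, while a gradient blow-up at points $(s_n,t_n)$ with $\lvert s_n\rvert\to\infty$ would, after rescaling, produce a non-constant $J$-holomorphic sphere carrying energy at least $\hbar(M,\omega,J)>0$ (the minimal energy of such a sphere, positive on any closed symplectic manifold by the monotonicity lemma) concentrated where almost no energy remains --- a contradiction; this gives the gradient bound near the ends, which is all the asymptotic analysis requires. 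Second, genuine convergence (not merely subsequential convergence) of $u(s,\cdot)$ as $s\to\pm\infty$ requires the standing hypothesis of that subsection that $H$ is $\alpha$-regular: without non-degeneracy the limits need not exist, so your phrase ``local rigidity of one-periodic orbits'' must be made explicit as this hypothesis, and the uniqueness argument should rest on the resulting finiteness of $\mathcal{P}_1(H;\alpha)$ together with a connectedness argument rather than on discreteness of the action spectrum, which can fail here since $\omega$ is not assumed rational or atoroidal at this stage.
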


We call a family of almost complex structures \textit{regular}
if the linearized operator for \eqref{eq:1} is surjective for any finite-energy solution to \eqref{eq:1}
in the homotopy class $\alpha$.
We denote by $\mathcal{J}_{\mathrm{reg}}(H;\alpha)$ the space of regular families of almost complex structures.
$\mathcal{J}_{\mathrm{reg}}(H;\alpha)\subset\mathcal{J}(M,\omega)$ is a generic set, i.e., a set containing a countable intersection of open and dense subsets in $\mathcal{J}(M,\omega)$ (see \cite{FHS}).
For any $J\in\mathcal{J}_{\mathrm{reg}}(H;\alpha)$ and any pair $\bar{x}^{\pm}\in\overline{\mathcal{P}}_1(H;\alpha)$,
the space
\[
	\mathcal{M}(\bar{x}^-,\bar{x}^+;H,J)=\{\,\text{solution to \eqref{eq:1} satisfying (i)}\,\}
\]
is a smooth manifold, and the dimension of the connected component of any $u\in\mathcal{M}(\bar{x}^-,\bar{x}^+;H,J)$ is given by
the difference of the Conley--Zehnder indices
of $\bar{x}^-$ and $\bar{x}^+$ relative to $u$.
We denote by $\mathcal{M}^1(\bar{x}^-,\bar{x}^+;H,J)$ the subspace of solutions of relative index one.
For $J\in\mathcal{J}_{\mathrm{reg}}(H;\alpha)$, the quotient $\mathcal{M}^1(\bar{x}^-,\bar{x}^+;H,J)/\mathbb{R}$
is a finite set for any pair $\bar{x}^{\pm}\in\overline{\mathcal{P}}_1(H;\alpha)$.

Let $a$ and $b$ be real numbers such that $-\infty\leq a<b\leq\infty$ and $a,b\not\in\mathrm{Spec}(H;\alpha)$.
We set $\overline{\mathcal{P}}_1^a=\{\,\bar{x}\in\overline{\mathcal{P}}_1(H;\alpha)\mid\mathcal{A}_H(\bar{x})<a\,\}$.
We define the chain group of our Floer--Novikov chain complex to be
\[
	\mathrm{CFN}^{[a,b)}(H;\alpha)=\mathrm{CFN}^b(H;\alpha)/\mathrm{CFN}^a(H;\alpha),
\]
where
\[
	\mathrm{CFN}^a(H;\alpha)=\left\{\,\xi=\sum\xi_{\bar{x}}\bar{x}\relmiddle|%
	\begin{aligned}%
		\bar{x}\in\overline{\mathcal{P}}_1^a,\, \xi_{\bar{x}}\in\mathbb{Z}/2\mathbb{Z}\ \text{such that $\forall C\in\mathbb{R}$,}\\%
		\#\{\,\bar{x}\mid\xi_{\bar{x}}\neq 0,\, \mathcal{A}_H(\bar{x})>C\,\}<\infty%
	\end{aligned}\,\right\}.
\]

We define the boundary operator $\partial_b^{H,J}\colon \mathrm{CFN}^b(H;\alpha)\to \mathrm{CFN}^b(H;\alpha)$ by
\[
	\partial_b^{H,J}(\bar{x})=\sum \#\left(\mathcal{M}^1(\bar{x},\bar{y};H,J)/\mathbb{R}\right)\,\bar{y}
\]
for a generator $\bar{x}\in \overline{\mathcal{P}}_1^b$.

\begin{theorem}[\cite{HS}]
If $J$ is regular, then the operator $\partial_b^{H,J}$ is well-defined and satisfies $\partial_b^{H,J}\circ\partial_b^{H,J}=0$.
\end{theorem}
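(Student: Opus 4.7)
The plan is to reproduce the classical Floer-theoretic proof (cf.\ \cite{Fl,BH}), with the main adjustment being that the chain group $\mathrm{CFN}^b(H;\alpha)$ is a Novikov-type completion of the module on $\overline{\mathcal{P}}_1^b$, because each underlying orbit $y\in\mathcal{P}_1(H;\alpha)$ typically has infinitely many lifts $\bar{y}\in\overline{\mathcal{P}}_1(H;\alpha)$ differing by the action of $\Gamma_{\alpha}$. The three ingredients I would assemble are the energy identity (Lemma \ref{lemma:energy}), transversality for $J\in\mathcal{J}_{\mathrm{reg}}(H;\alpha)$, and Gromov--Floer compactness (with bubbling handled by the abstract perturbation scheme of \cite{FO,On}, since $(M,\omega)$ is not assumed weakly monotone).

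First I would verify that each coefficient of $\partial_b^{H,J}(\bar{x})$ is a well-defined element of $\mathbb{Z}/2\mathbb{Z}$. Regularity of $J$ makes $\mathcal{M}^1(\bar{x},\bar{y};H,J)$ a smooth $1$-manifold on which $\mathbb{R}$ acts freely (any index-$1$ Floer trajectory is non-stationary, since a stationary trajectory has relative Conley--Zehnder index $0$), so the quotient is $0$-dimensional. Its compactness follows from the energy bound $E(u)=\mathcal{A}_H(\bar{x})-\mathcal{A}_H(\bar{y})$ of Lemma \ref{lemma:energy}(ii) combined with Gromov--Floer compactness in the form of \cite{FO,On}, so no bubbling obstructs the count.

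Next I would check the Novikov finiteness condition on the formal sum $\partial_b^{H,J}(\bar{x})$. By Lemma \ref{lemma:energy}(ii), every $\bar{y}$ appearing with a nonzero coefficient satisfies $\mathcal{A}_H(\bar{y})\leq\mathcal{A}_H(\bar{x})$, so it suffices to show that for each $C<\mathcal{A}_H(\bar{x})$ the set of $\bar{y}$ with $\mathcal{M}^1(\bar{x},\bar{y};H,J)\neq\emptyset$ and $\mathcal{A}_H(\bar{y})>C$ is finite. A hypothetical infinite family of such $\bar{y}_n$ would give Floer trajectories $u_n$ of uniformly bounded energy $\leq\mathcal{A}_H(\bar{x})-C$; since $\mathcal{P}_1(H;\alpha)$ is finite (a consequence of $\alpha$-regularity and compactness of $M$), infinitely many of the $\bar{y}_n$ would lie over a single $y\in\mathcal{P}_1(H;\alpha)$, and the relation $[x^+,\Pi^-\# u]=\bar{x}^+$ of Lemma \ref{lemma:energy}(i) would force the recapping classes $[\Pi_x\# u_n]\in\Gamma_{\alpha}$ to be pairwise distinct. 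Gromov--Floer compactness applied to $\{u_n\}$ then gives a contradiction: any limit broken trajectory has a fixed total capping class, leaving only finitely many possibilities for the right asymptote $\bar{y}_n$ in a compact action window.

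Finally, to prove $\partial_b^{H,J}\circ\partial_b^{H,J}=0$ I would run the standard cobordism argument on the relative-index-$2$ moduli space $\mathcal{M}^2(\bar{x},\bar{y};H,J)/\mathbb{R}$. For regular $J$ this is a smooth $1$-manifold whose Gromov--Floer compactification has boundary points canonically in bijection with broken pairs
\[
	(u_1,u_2)\in\bigl(\mathcal{M}^1(\bar{x},\bar{z};H,J)/\mathbb{R}\bigr)\times\bigl(\mathcal{M}^1(\bar{z},\bar{y};H,J)/\mathbb{R}\bigr),
\]
with $\bar{z}$ ranging over intermediate capped orbits, by the standard gluing theorem. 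Counting the boundary of a compact $1$-manifold modulo $2$ yields the desired identity coefficient-by-coefficient. The main obstacle throughout is the compactness analysis: bubbling-off of pseudoholomorphic spheres must be controlled by the virtual techniques of \cite{FO,On}, and one must keep careful track of recappings so as to identify the correct intermediate lift $\bar{z}$ in each broken configuration and to ensure that the finiteness conclusions survive the passage to the Novikov completion.
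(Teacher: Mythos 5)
Your outline is correct and is essentially the standard argument that the paper defers to by citation to \cite{Fl} (together with \cite{FO,On} for the compactness issues and the Novikov-type finiteness over $\Gamma_{\alpha}$), so there is nothing to compare beyond noting that you have reconstructed the intended proof. The three steps you identify --- finiteness of the index-one counts, the Novikov condition on the formal sum via the energy bound and the finiteness of $\mathcal{P}_1(H;\alpha)$, and the broken-trajectory cobordism argument for $\partial^2=0$ --- are exactly the content of the cited results.
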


The energy identity (ii) in Lemma \ref{lemma:energy} implies that $\mathrm{CFN}^a(H;\alpha)$ is invariant under the boundary operator $\partial_b^{H,J}$.
Thus we get an induced operator $\partial_{[a,b)}^{H,J}$ on the quotient $\mathrm{CFN}^{[a,b)}(H;\alpha)$.

\begin{definition}
The \textit{filtered Floer--Novikov homology group} is defined to be
\[
	\mathrm{HFN}^{[a,b)}(H,J;\alpha)=\Ker{\partial_{[a,b)}^{H,J}}/\Image{\partial_{[a,b)}^{H,J}}.
\]
\end{definition}

\begin{theorem}[\cite{Fl,Sa,SZ}]
If $J_0, J_1\in\mathcal{J}(H;\alpha)$ are two regular almost complex structures, then there exists a natural isomorphism
\[
	\mathrm{HFN}^{[a,b)}(H,J_0;\alpha)\to \mathrm{HFN}^{[a,b)}(H,J_1;\alpha).
\]
\end{theorem}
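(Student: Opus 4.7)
The plan is to use the standard continuation map construction adapted to the filtered Floer--Novikov setting. Choose a smooth homotopy $\{J_s\}_{s\in\mathbb{R}}$ of $\omega$-compatible almost complex structures with $J_s=J_0$ for $s\leq -R$ and $J_s=J_1$ for $s\geq R$ for some $R>0$, and consider the $s$-dependent Floer equation
\[
	\partial_s u+J_s(u)\bigl(\partial_t u-X_{H_t}(u)\bigr)=0.
\]
For a generic such homotopy, the linearization is surjective at every finite-energy solution (this is the parametric version of the transversality statement for $\mathcal{J}_{\mathrm{reg}}(H;\alpha)$), so the moduli space $\mathcal{M}(\bar{x}^-,\bar{x}^+;H,\{J_s\})$ is a smooth manifold with dimension given by the relative Conley--Zehnder index. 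Its zero-dimensional component $\mathcal{M}^0$ is a compact finite set, and counting these defines
\[
	\Phi^{\{J_s\}}(\bar{x})=\sum \#\mathcal{M}^0(\bar{x},\bar{y};H,\{J_s\})\,\bar{y}.
\]

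Next I would verify that $\Phi^{\{J_s\}}$ preserves the action filtration. Because $H$ is $s$-independent, the energy identity still reads $E(u)=\mathcal{A}_H(\bar{x}^-)-\mathcal{A}_H(\bar{x}^+)$ with no curvature correction, so $\mathcal{A}_H(\bar{y})\leq\mathcal{A}_H(\bar{x})$ whenever $\mathcal{M}^0(\bar{x},\bar{y};H,\{J_s\})\neq\emptyset$. Hence $\Phi^{\{J_s\}}$ restricts to $\mathrm{CFN}^a(H,J_0;\alpha)\to\mathrm{CFN}^a(H,J_1;\alpha)$ and $\mathrm{CFN}^b(H,J_0;\alpha)\to\mathrm{CFN}^b(H,J_1;\alpha)$, and thus descends to the quotients. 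The chain map identity $\partial^{H,J_1}\circ\Phi^{\{J_s\}}=\Phi^{\{J_s\}}\circ\partial^{H,J_0}$ follows from the standard analysis of the boundary of the one-dimensional strata of $\mathcal{M}(\bar{x},\bar{y};H,\{J_s\})$: these boundary components correspond to trajectories breaking off at $s=-\infty$ (contributing $\Phi\circ\partial^{H,J_0}$) or at $s=+\infty$ (contributing $\partial^{H,J_1}\circ\Phi$).

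To upgrade $\Phi^{\{J_s\}}$ to an isomorphism, I would carry out two standard homotopy-of-homotopies arguments. First, given two regular homotopies $\{J^0_s\}$ and $\{J^1_s\}$ between $J_0$ and $J_1$, pick a generic two-parameter family $\{J^\lambda_s\}_{\lambda\in[0,1]}$ and define a chain homotopy $K$ by counting index $-1$ solutions over $\lambda\in[0,1]$; the usual boundary analysis gives $\Phi^{\{J^1_s\}}-\Phi^{\{J^0_s\}}=\partial^{H,J_1}K+K\partial^{H,J_0}$, again preserving the filtration by the same energy argument. Second, concatenating $\{J_s\}$ with its reverse and homotoping to the constant path at $J_0$ (whose continuation map is the identity modulo a chain homotopy, obtained from the moduli of $s$-shifted trivial strips) shows that the two continuation maps in opposite directions are inverse to each other on homology.

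The main obstacle I expect is compactness of the moduli spaces in this setting. On one hand, one needs the Gromov--Floer compactness for $s$-dependent Floer cylinders on a general closed symplectic manifold without the weak monotonicity hypothesis of \cite{BH}; this is handled by the virtual machinery of \cite{FO,On} already invoked when defining the differential, and the same virtual perturbation yields a regular parametric moduli space. On the other hand, the Novikov covering $\overline{\mathcal{L}_\alpha M}\to\mathcal{L}_\alpha M$ introduces infinitely many lifts of each periodic orbit, so one must check that for each capped orbit $\bar{x}$ only finitely many $\bar{y}$ with $\mathcal{A}_H(\bar{y})>C$ can be connected to $\bar{x}$ by a continuation cylinder. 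This is ensured precisely by the energy identity together with the finiteness condition built into the definition of $\mathrm{CFN}^a(H;\alpha)$, and it is what makes $\Phi^{\{J_s\}}$ a well-defined map on the Novikov completion.
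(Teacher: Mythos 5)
Your proposal is correct and is exactly the standard continuation-map argument of the references \cite{Fl,Sa,SZ} that the paper cites for this statement without reproducing a proof; in particular you correctly identify the one point that matters in the filtered Novikov setting, namely that with $H$ fixed the energy identity has no correction term, so the continuation map preserves the action windows and descends to the quotient complexes.
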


We refer to $\mathrm{HFN}^{[a,b)}(H;\alpha)=\mathrm{HFN}^{[a,b)}(H,J;\alpha)$ as the Floer--Novikov homology associated to $H$.


\subsubsection{Continuation}

We define the set
\[
	\mathcal{H}^{a,b}(M;\alpha)=\{\,H\in C^{\infty}(S^1\times M)\mid a,b\not\in\mathrm{Spec}(H;\alpha)\,\}.
\]

\begin{proposition}[{\cite[Remark 4.4.1]{BPS}}]\label{proposition:nbd}
Every Hamiltonian $H\in\mathcal{H}^{a,b}(M;\alpha)$ has a neighborhood $\mathcal{U}$
such that the Floer--Novikov homology groups $\mathrm{HFN}^{[a,b)}(H',J';\alpha)$, for any $\alpha$-regular $H'\in \mathcal{U}$
and any regular almost complex structure $J'\in\mathcal{J}_{\mathrm{reg}}(H';\alpha)$, are naturally isomorphic.
\end{proposition}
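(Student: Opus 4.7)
The plan is to adapt the standard continuation argument of Floer theory to the filtered Floer--Novikov setting. Fix $H\in\mathcal{H}^{a,b}(M;\alpha)$. Since $a,b\notin\mathrm{Spec}(H;\alpha)$ and, in the setting of the paper (under $\alpha$-toroidal rationality of $\omega$), the spectrum is a locally finite subset of $\mathbb{R}$, I would choose $\delta>0$ small enough that the closed intervals $[a-2\delta,a+2\delta]$ and $[b-2\delta,b+2\delta]$ avoid $\mathrm{Spec}(H;\alpha)$. Set
\[
	\mathcal{U}=\{\,H'\in C^\infty(S^1\times M)\mid\|H'-H\|_{L^\infty}<\delta/2\,\},
\]
and, after shrinking $\mathcal{U}$ if necessary using the implicit function theorem applied at the orbits of $H$, arrange that no action value of any $H'\in\mathcal{U}$ lies in $[a-\delta,a+\delta]\cup[b-\delta,b+\delta]$.

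The key step is to construct a continuation map. Given two $\alpha$-regular $H'_0,H'_1\in\mathcal{U}$ with regular $J'_0,J'_1$, pick a smooth monotone cutoff $\chi\colon\mathbb{R}\to[0,1]$ with $\chi\equiv 0$ for $s\leq -R$ and $\chi\equiv 1$ for $s\geq R$, set $H_s=(1-\chi(s))H'_0+\chi(s)H'_1$, and choose a generic family $J_s$ interpolating $J'_0$ and $J'_1$. Counting rigid finite-energy solutions of
\[
	\partial_s u+J_s(u)\bigl(\partial_t u-X_{H_s}(u)\bigr)=0
\]
between capped orbits defines a chain map $\Phi\colon\mathrm{CFN}(H'_0;\alpha)\to\mathrm{CFN}(H'_1;\alpha)$ in the usual way. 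The $s$-dependent analogue of the energy identity in Lemma \ref{lemma:energy}(ii), combined with $\partial_s H_s=\chi'(s)(H'_1-H'_0)$ and $\int_{\mathbb{R}}\chi'(s)\,ds=1$, gives
\[
	\mathcal{A}_{H'_1}(\bar x^+)\leq\mathcal{A}_{H'_0}(\bar x^-)+\|H'_1-H'_0\|_{L^\infty}<\mathcal{A}_{H'_0}(\bar x^-)+\delta,
\]
so $\Phi$ shifts action by less than $\delta$. Combined with the gap choice, $\Phi$ restricts to a chain map on the filtered quotients $\mathrm{CFN}^{[a,b)}(H'_0;\alpha)\to\mathrm{CFN}^{[a,b)}(H'_1;\alpha)$.

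To complete the proof, I would build the reverse continuation $\Phi'$ from $H'_1$ to $H'_0$ (via the analogous homotopy with the roles swapped) and exhibit a chain homotopy between $\Phi'\circ\Phi$ and the identity through a generic one-parameter family of homotopies. The same action estimate controls this chain homotopy, so everything descends to the filtered quotients, and $\Phi$ becomes an isomorphism on $\mathrm{HFN}^{[a,b)}$. Independence from the choices of $\chi$, $J_s$, and the interpolating homotopy follows from the same homotopy-of-homotopies technique, which also yields the naturality claim.

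The main obstacle is ensuring the filtered property at the endpoints $a,b$. This requires (i) local finiteness of $\mathrm{Spec}(H;\alpha)$ near $a$ and $b$---in the setting of the paper ensured by $\alpha$-toroidal rationality of $\omega$---and (ii) uniform avoidance of $a,b$ by $\mathrm{Spec}(H';\alpha)$ for all $H'\in\mathcal{U}$, which is a compactness statement on the parameter space that may force further shrinking of $\mathcal{U}$. The standard Floer analysis (regularity, compactness of the moduli spaces $\mathcal{M}(\bar x^-,\bar x^+;\{H_s\},\{J_s\})$, and the gluing argument underlying $\Phi'\circ\Phi\simeq\mathrm{id}$) is otherwise a routine adaptation of what is summarized in Lemma \ref{lemma:energy} and the subsequent discussion.
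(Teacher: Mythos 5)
The paper offers no proof of this proposition at all --- it is quoted verbatim from \cite[Remark 4.4.1]{BPS} --- so there is nothing to compare against except the standard argument, and your proposal is exactly that standard continuation argument; it is essentially correct. Two points deserve slightly more care than you give them. First, your constants are off by a factor of two at the last step: the chain homotopy $K$ between $\Phi'\circ\Phi$ and the identity shifts actions upward by up to $\lVert H_1'-H_0'\rVert+\lVert H_0'-H_1'\rVert<2\delta$, not $\delta$, so for $K$ to descend to the fixed window $[a,b)$ you need the spectra of all $H'\in\mathcal{U}$ to avoid $[a,a+2\delta]\cup[b,b+2\delta]$ (or, as in the paper's own continuation formalism, you let the window slide by $C+C'$ and identify the composite with the quotient-inclusion map, which is an isomorphism precisely when $[a,a+C+C']$ and $[b,b+C+C']$ miss the spectrum); this is a trivial renaming of $\delta$ but should be stated. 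Second, the ``uniform avoidance'' step is where the real content lies: an implicit-function-theorem argument at the orbits of $H$ does not suffice, since $H$ is allowed to be degenerate and nearby Hamiltonians can have orbits far from those of $H$. The correct argument is an Arzel\`a--Ascoli compactness argument on sequences $H_n\to H$ with capped orbits of action in $[a-\delta,a+\delta]$, and it genuinely uses that the period group $\langle\overline{[\omega]},\pi_1(\mathcal{L}_{\alpha}M)\rangle$ is discrete (equivalently, that $\omega$ is $\alpha$-toroidally rational), since the capping ambiguity of the limiting orbit is only controlled up to an element of this group; you correctly flag this, and indeed the proposition is only applied in the paper under that hypothesis.
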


Proposition \ref{proposition:nbd} enables us to define the Floer--Novikov homology $\mathrm{HFN}^{[a,b)}(H;\alpha)$
even when $H$ is not $\alpha$-regular.

\begin{definition}
For $H\in\mathcal{H}^{a,b}(M;\alpha)$, we define $\mathrm{HFN}^{[a,b)}(H;\alpha)=\mathrm{HFN}^{[a,b)}(K;\alpha)$,
where $K$ is any $\alpha$-regular Hamiltonian sufficiently close to $H$.
\end{definition}

Let $H^+$, $H^-\colon S^1\times M\to\mathbb{R}$ be two Hamiltonians.
We choose regular almost complex structures $J^{\pm}\in\mathcal{J}_{\mathrm{reg}}(H^{\pm};\alpha)$.
We consider a \textit{linear homotopy} $\{H_s\}_{s\in\mathbb{R}}$ from $H^-$ to $H^+$, i.e.,
a smooth homotopy of the form
\[
	(H_s)_t=H_t^-+\beta(s)(H_t^+-H_t^-),
\]
where $\beta\colon\mathbb{R}\to [0,1]$ is a non-decreasing function,
and choose a smooth homotopy $\{J_s\}_{s\in\mathbb{R}}$ from $J^-$ to $J^+$ such that
\[
	(H_s,J_s)=%
	\begin{cases}%
		(H^-,J^-) & \text{if\: $s\leq -R$},\\
		(H^+,J^+) & \text{if\: $s\geq R$},
	\end{cases}
\]
for some constant $R>0$.
We set $H_{s,t}=(H_s)_t$.
Let $\alpha\in [S^1,M]$ be a nontrivial free homotopy class and $a,b\in\mathbb{R}\cup\{\infty\}$
such that $a<b$ and $a,b\not\in\mathrm{Spec}(H^{\pm};\alpha)$.
It follows from the energy identity
\[
	E(u)=\mathcal{A}_{H^-}(\bar{x}^-)-\mathcal{A}_{H^+}(\bar{x}^+)+\int_0^1\int_{-\infty}^{\infty}\partial_s H\bigl(s,t,u(s,t)\bigr)\,dsdt
\]
that the Floer--Novikov chain map $\mathrm{CFN}(H^-;\alpha)\to\mathrm{CFN}(H^+;\alpha)$,
defined in terms of the solutions of the equation
\[
	\partial_s u+J_s(u)\bigl(\partial_t u-X_{H_{s,t}}(u)\bigr)=0,
\]
induces a natural homomorphism
\[
	\sigma_{H^+H^-}\colon \mathrm{HFN}^{[a,b)}(H^-;\alpha)\to \mathrm{HFN}^{[a+C,b+C)}(H^+;\alpha),
\]
where $C=C(H_s)$ is the constant given by
\[
	C=\max\left\{\int_0^1\max_{M}\left(H_t^+-H_t^-\right)dt,0\right\}
\]
(see, e.g., \cite[Subsection 4.4]{BPS}).


\section{Lemmas from algebraic topology and group theory}

In this section,
we review several necessary facts on aspherical cohomology classes,
the fundamental groups of loop spaces and elementary group theory.

\subsection{Aspherical cohomology classes and Eilenberg--MacLane spaces}

In this subsection, we collect some facts concerning aspherical cohomology classes and the Eilenberg--MacLane space.
Given a group $G$, we recall that the \textit{Eilenberg--MacLane space} $K(G,1)$ is defined to be a connected CW-complex
with fundamental group $G$ and such that $\pi_i(K(G,1))=0$ for any $i>1$.

\begin{proposition}[{\cite[Lemma 2.1]{RT}}]\label{proposition:iff}
Let $X$ be a finite CW-complex and $u\in H^2(X;\mathbb{R})$ an aspherical cohomology class.
Then for every map $f\colon X\to K(\pi_1(X),1)$ which induces an isomorphism of fundamental groups,
\[
	u\in\Image\left(f^{\ast}\colon H^2(K(\pi_1(X),1);\mathbb{R})\to H^2(X;\mathbb{R})\right).
\]
\end{proposition}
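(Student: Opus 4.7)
The plan is to reduce to a concrete model of $K(\pi_1(X),1)$ and then run a long exact sequence argument. First, recall that any two maps from a CW-complex into an aspherical space that induce the same homomorphism on $\pi_1$ are homotopic, so both induce the same map on cohomology. It therefore suffices to verify the statement for one convenient $f$. I would take $f$ to be the inclusion $i\colon X \hookrightarrow Y$, where $Y = K(\pi_1(X),1)$ is constructed from $X$ by successively attaching cells of dimensions $\geq 3$: first 3-cells along representatives of a generating set of $\pi_2(X)$ to kill $\pi_2$, then higher-dimensional cells to kill all $\pi_n$ for $n \geq 3$.

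Next, apply the long exact sequence of the pair $(Y,X)$ with real coefficients. Since $Y \setminus X$ contains only cells of dimension $\geq 3$, the relative cellular cochain complex immediately gives $H^2(Y,X;\mathbb{R}) = 0$, and the relevant portion of the sequence reads
\begin{equation*}
0 \longrightarrow H^2(Y;\mathbb{R}) \xrightarrow{\ i^*\ } H^2(X;\mathbb{R}) \xrightarrow{\ \delta\ } H^3(Y,X;\mathbb{R}).
\end{equation*}
Thus the task reduces to showing $\delta u = 0$. Since $\mathbb{R}$ is a field, the universal coefficient theorem identifies $H^3(Y,X;\mathbb{R}) \cong \mathrm{Hom}(H_3(Y,X;\mathbb{Z}),\mathbb{R})$, and under this identification $\delta u$ corresponds to $u \circ \partial$, where $\partial\colon H_3(Y,X;\mathbb{Z}) \to H_2(X;\mathbb{Z})$ is the connecting homomorphism in the homology long exact sequence.

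The key point is the identification
\begin{equation*}
\Image\bigl(\partial\colon H_3(Y,X;\mathbb{Z}) \to H_2(X;\mathbb{Z})\bigr) = h\bigl(\pi_2(X)\bigr),
\end{equation*}
where $h$ is the Hurewicz homomorphism. This follows from the homology long exact sequence of $(Y,X)$ together with $H_2(Y,X;\mathbb{Z})=0$ (so $\Image(\partial) = \ker(i_*)$) and the classical Hopf formula $H_2(Y;\mathbb{Z}) \cong H_2(\pi_1(X);\mathbb{Z}) \cong H_2(X;\mathbb{Z})/h(\pi_2(X))$. By the aspherical hypothesis, $u$ vanishes on $h(\pi_2(X))$, hence on $\Image(\partial)$, so $\delta u = 0$ and therefore $u \in \Image(i^*) = \Image(f^*)$.

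I do not foresee a serious obstacle; all the ingredients are standard. The mildly technical step is the identification $\Image(\partial) = h(\pi_2(X))$, which is where the aspherical hypothesis is actually used. If one prefers a more hands-on verification avoiding Hopf's formula, one can check $\delta u = 0$ directly on cells: each generating 3-cell $e^3_\beta$ of $Y \setminus X$ is attached along a sphere representing a class $[\phi_\beta] \in \pi_2(X)$, and
\begin{equation*}
\langle \delta u,\, [e^3_\beta]\rangle \;=\; \langle u,\, \partial[e^3_\beta]\rangle \;=\; \langle u,\, h([\phi_\beta])\rangle \;=\; 0
\end{equation*}
by asphericity, and this is enough because $H_3(Y,X;\mathbb{Z})$ is a quotient of the free abelian group on the 3-cells.
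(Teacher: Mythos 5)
Your argument is correct, and it is essentially the standard proof of this fact (the paper itself gives no proof, simply quoting \cite[Lemma 2.1]{RT}, whose argument is the same cell-attachment/long-exact-sequence one you describe). The only point worth tightening is the opening reduction: two maps into $K(\pi_1(X),1)$ inducing \emph{different} isomorphisms on $\pi_1$ need not be homotopic, but they differ by composition with a homotopy self-equivalence of the target, which does not change $\Image(f^{\ast})$, so the reduction to the inclusion $i\colon X\hookrightarrow Y$ still goes through.
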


\begin{corollary}[{\cite[Lemma 4.2]{LO}}, {\cite[Corollary 2.2]{RT}}]\label{corollary:iff}
Let $(M,\omega)$ be a symplectic manifold.
Then the following conditions are equivalent.
\begin{enumerate}
	\item $\omega$ is aspherical,
	\item there exists a map $f\colon M\to K(\pi_1(M),1)$ which induces an isomorphism of fundamental groups and such that
		\[
			[\omega]\in\Image\left(f^{\ast}\colon H^2(K(\pi_1(M),1);\mathbb{R})\to H^2(M;\mathbb{R})\right),
		\]
	\item there exists a map $f\colon M\to K(\pi_1(M),1)$ such that
		\[
			[\omega]\in\Image\left(f^{\ast}\colon H^2(K(\pi_1(M),1);\mathbb{R})\to H^2(M;\mathbb{R})\right).
		\]
\end{enumerate}
\end{corollary}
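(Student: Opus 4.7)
The plan is to prove the three conditions equivalent via the chain $(\mathrm{i})\Rightarrow(\mathrm{ii})\Rightarrow(\mathrm{iii})\Rightarrow(\mathrm{i})$. The implication $(\mathrm{ii})\Rightarrow(\mathrm{iii})$ is immediate, since $(\mathrm{iii})$ is obtained from $(\mathrm{ii})$ by forgetting the extra requirement on $f_{\ast}$ at the level of fundamental groups. The serious content lies in the other two implications.

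For $(\mathrm{i})\Rightarrow(\mathrm{ii})$, I would start by noting that since $M$ is a (smooth, closed) manifold, it admits a finite CW-structure, so Proposition \ref{proposition:iff} applies. By the standard construction of Eilenberg--MacLane spaces -- attach cells to $M$ of dimension $\geq 3$ to kill all higher homotopy groups -- we obtain a CW-inclusion $f\colon M\hookrightarrow K(\pi_1(M),1)$ which induces an isomorphism on $\pi_1$. Proposition \ref{proposition:iff}, applied to $X=M$ and $u=[\omega]$, then yields $[\omega]\in\mathrm{Im}(f^{\ast})$, which is exactly condition $(\mathrm{ii})$.

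For $(\mathrm{iii})\Rightarrow(\mathrm{i})$, suppose $[\omega]=f^{\ast}u$ for some $u\in H^2(K(\pi_1(M),1);\mathbb{R})$. To check asphericity of $\omega$, pick any $\phi\colon S^2\to M$ and compute
\[
	\langle [\omega],\phi_{\ast}[S^2]\rangle
	=\langle f^{\ast}u,\phi_{\ast}[S^2]\rangle
	=\langle u,(f\circ\phi)_{\ast}[S^2]\rangle.
\]
By definition of $K(\pi_1(M),1)$ we have $\pi_2(K(\pi_1(M),1))=0$, so the map $f\circ\phi\colon S^2\to K(\pi_1(M),1)$ is null-homotopic. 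Consequently it factors up to homotopy through a point, which forces $(f\circ\phi)_{\ast}[S^2]=0$ in $H_2(K(\pi_1(M),1);\mathbb{Z})$. Hence the pairing above vanishes, proving that $[\omega]$ vanishes on $\pi_2(M)$, i.e.\ $\omega$ is aspherical.

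The only step requiring real input from outside is $(\mathrm{i})\Rightarrow(\mathrm{ii})$, and there the entire burden has already been shouldered by Proposition \ref{proposition:iff}; what I expect to be mildly delicate is simply flagging that $M$ admits a finite CW-structure and that a classifying map $f\colon M\to K(\pi_1(M),1)$ inducing a $\pi_1$-isomorphism exists. Both are standard, so there is no genuine obstacle: the corollary really is a straightforward repackaging of Proposition \ref{proposition:iff} together with the elementary fact that $\pi_2$ of an Eilenberg--MacLane space $K(G,1)$ vanishes.
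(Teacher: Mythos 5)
Your proof is correct and is essentially the argument intended by the paper, which states this as an immediate consequence of Proposition \ref{proposition:iff} (citing \cite{LO,RT}): the implication (i)$\Rightarrow$(ii) is exactly Proposition \ref{proposition:iff} applied to the classifying map, (ii)$\Rightarrow$(iii) is trivial, and (iii)$\Rightarrow$(i) follows from $\pi_2(K(\pi_1(M),1))=0$ as you compute.
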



\subsection{Fundamental groups of free loop spaces}

In this subsection, we describe the growth of the fundamental group of the free loop component containing iterations of a loop.
Namely, we examine how $\pi_1(\mathcal{L}_{\alpha}X)$ and $\pi_1(\mathcal{L}_{\alpha^k}X)$ differ.
Let $C_G(g)$ denote the \textit{centralizer} of an element $g$ in a group $G$:
$C_G(g)=\{\,c\in G\mid gc=cg\,\}$.
The following proposition enables us to compute the fundamental group of a component of a free loop space.

\begin{proposition}[{\cite[Proposition 1]{Ha}}]\label{proposition:centralizer}
Let $X$ be a connected topological space such that $\pi_2(X)=0$.
Let $\alpha\in [S^1,X]$ be a free homotopy class and choose $z_{\alpha}\in\mathcal{L}_{\alpha}X$ and $\gamma_{\alpha}\in\pi_1(X)$ representing $\alpha$.
Then
\[
	\pi_1(\mathcal{L}_{\alpha}X,z_{\alpha})\cong C_{\pi_1(X)}(\gamma_{\alpha}).
\]
\end{proposition}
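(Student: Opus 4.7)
The plan is to use the long exact sequence of homotopy groups associated with the evaluation fibration
\[
\mathrm{ev}\colon \mathcal{L}_\alpha X \to X, \quad \gamma \mapsto \gamma(0).
\]
This is a Hurewicz fibration (being the restriction of $\mathrm{ev}\colon\mathcal{L}X\to X$ to a union of components of the source), and its fiber over $x_0=z_\alpha(0)$ is the subspace $F\subset\Omega_{x_0}X$ consisting of based loops at $x_0$ that are freely homotopic to $z_\alpha$. Since two based loops are freely homotopic precisely when they represent conjugate elements of $\pi_1(X,x_0)$, the set $\pi_0(F)$ is canonically in bijection with the conjugacy class $\alpha$ of $\gamma_\alpha$, and the component of $z_\alpha$ corresponds to $\gamma_\alpha$ itself.

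First I would write the relevant segment of the long exact sequence of the fibration at the basepoint $z_\alpha$:
\[
\pi_2(X,x_0)\to \pi_1(F,z_\alpha)\to \pi_1(\mathcal{L}_\alpha X,z_\alpha)\xrightarrow{\mathrm{ev}_\ast}\pi_1(X,x_0)\xrightarrow{\partial}\pi_0(F).
\]
The hypothesis $\pi_2(X)=0$ kills the leftmost term. Moreover, every path-component of $\Omega_{x_0}X$ has the homotopy type of the identity component via translation by a based loop, so $\pi_1(F,z_\alpha)\cong\pi_1(\Omega_{x_0}X)\cong\pi_2(X,x_0)=0$. Exactness then forces $\mathrm{ev}_\ast$ to be injective and identifies $\pi_1(\mathcal{L}_\alpha X,z_\alpha)$ with $\ker\partial\subset\pi_1(X,x_0)$.

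The next step is to identify $\partial$ explicitly via path lifting. Given a loop $\sigma\in\pi_1(X,x_0)$, a lift of $\sigma$ to a path in $\mathcal{L}_\alpha X$ starting at $z_\alpha$ is obtained by dragging the basepoint of $z_\alpha$ along $\sigma$; the terminal free loop, re-based at $x_0$, represents the element $\sigma\gamma_\alpha\sigma^{-1}\in\pi_1(X,x_0)$. Hence $\partial(\sigma)$ is the component of $F$ indexed by $\sigma\gamma_\alpha\sigma^{-1}$, and $\partial(\sigma)$ equals the component of $\gamma_\alpha$ if and only if $\sigma\gamma_\alpha=\gamma_\alpha\sigma$. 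Combined with the previous paragraph this gives
\[
\pi_1(\mathcal{L}_\alpha X,z_\alpha)\cong \ker\partial=C_{\pi_1(X)}(\gamma_\alpha).
\]

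The main technical point is justifying the explicit description of the connecting homomorphism $\partial$: one must construct the path-lift of $\sigma$ in the fibration carefully and verify that the conjugation direction matches the chosen conventions for loop concatenation. For the purpose of computing $\ker\partial$ the choice of convention is immaterial (since $C_{\pi_1(X)}(\gamma_\alpha)$ is invariant under inversion), so beyond this bookkeeping the argument reduces to a formal application of the fibration long exact sequence together with the vanishing hypothesis $\pi_2(X)=0$.
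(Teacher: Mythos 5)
Your argument is correct and is the standard one: the paper does not prove this proposition but cites Hansen \cite{Ha}, whose proof is precisely this analysis of the evaluation fibration $\mathrm{ev}\colon\mathcal{L}_{\alpha}X\to X$, using $\pi_2(X)=0$ to kill $\pi_1$ of the fiber and identifying the connecting map $\pi_1(X)\to\pi_0(F)$ with conjugation by $\gamma_{\alpha}$. The only bookkeeping worth making explicit is that you base $\pi_1(X)$ at $x_0=z_{\alpha}(0)$ and take $\gamma_{\alpha}=[z_{\alpha}]$; for a different representative of the conjugacy class $\alpha$ the centralizer changes only by conjugation, so the stated isomorphism is unaffected.
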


\subsubsection{Virtually abelian groups}

From now on, we concentrate on spaces having virtually abelian fundamental groups.

\begin{definition}
A group $G$ is called \textit{virtually abelian} if it contains an abelian subgroup of finite index.
\end{definition}

Let $G$ be a virtually abelian group and $A<G$ an abelian subgroup of finite index.
For $g\in G$,
there exists $\ell_g\in\{1,\ldots,(G:A)\}$ such that $g^{\ell_g}\in A$.
Let $q_g$ be a positive integer coprime to $\ell_g$.
We prove the following useful lemma concerning virtually abelian groups.

\begin{lemma}\label{lemma:bg}
For every $k\in\mathbb{Z}_{\geq0}$ and $c\in C_G(g^{q_g+k\ell_g})$, there exists
$m\in\{1,\ldots,(G:A)\}$ such that $c^m\in C_G(g)$.
\end{lemma}

\begin{proof}
Let $k\in\mathbb{Z}_{\geq0}$ and $c\in C_G(g^{q_g+k\ell_g})\subset G$.
Then there exists $m\in\{1,\ldots,(G:A)\}$ such that $c^m\in A$.
Since $A$ is abelian, $g^{\ell_g}$ and $c^m$ commute.
Since $q_g$ and $\ell_g$ are coprime, we have
\[
	n_1q_g+n_2\ell_g=1
\]
for some $n_1$, $n_2\in\mathbb{Z}$.
Therefore, we have
\begin{align*}
	c^mgc^{-m}%
	&=c^mg^{n_1q_g+n_2\ell_g}c^{-m}=%
	c^m\left(g^{q_g+k\ell_g}\right)^{n_1}\left(g^{\ell_g}\right)^{-n_1k+n_2}c^{-m}\\
	&=\left(g^{q_g+k\ell_g}\right)^{n_1}c^mc^{-m}\left(g^{\ell_g}\right)^{-n_1k+n_2}=g.
\end{align*}
This finishes the proof.
\end{proof}

Let $X$ be a finite CW-complex whose fundamental group is virtually abelian.
Then there exists an abelian subgroup $A<\pi_1(X)$ of finite index.
Let $\alpha\in [S^1,X]$ be a free homotopy class and choose $\gamma_{\alpha}\in\pi_1(X)$ representing $\alpha$.
As above, there exists $\ell_{\alpha}\in\{1,\ldots,(\pi_1(X):A)\}$ such that $\gamma_{\alpha}^{\ell_{\alpha}}\in A$.
Let $q_{\alpha}$ be an arbitrary positive integer coprime to $\ell_{\alpha}$.

We recall that every cohomology class $u\in H^2(X;\mathbb{R})$ defines
a cohomology class $\overline{u}\in H^1(\mathcal{L}X;\mathbb{R})$ (see Subsection \ref{subsection:conventions}).
The following is the key lemma.

\begin{lemma}\label{lemma:key}
Let $X$ be a finite CW-complex whose fundamental group is virtually abelian and $u\in H^2(X;\mathbb{R})$.
Then the following conditions are equivalent.
\begin{enumerate}
	\item $u$ is aspherical,
	\item for every $\alpha\in [S^1,X]$, $k\in\mathbb{Z}_{\geq0}$ and $[v]\in\pi_1(\mathcal{L}_{\alpha^{q_{\alpha}+k\ell_{\alpha}}}X)$,
	there exist $m\in\{1,\ldots,(\pi_1(X):A)\}$ and $[w]\in\pi_1(\mathcal{L}_{\alpha}X)$ such that
		\[
			m\langle\overline{u},[v]\rangle=(q_{\alpha}+k\ell_{\alpha})\langle\overline{u},[w]\rangle,
		\]
	\item some $\alpha_0\in [S^1,X]$ satisfies the following:
	For any $k\in\mathbb{Z}_{\geq0}$ and $[v]\in\pi_1\bigl(\mathcal{L}_{\alpha_0^{q_{\alpha_0}+k\ell_{\alpha_0}}}X\bigr)$
	there exist $m\in\{1,\ldots,(\pi_1(X):A)\}$ and $[w]\in\pi_1(\mathcal{L}_{\alpha_0}X)$ such that
		\[
			m\langle\overline{u},[v]\rangle=(q_{\alpha_0}+k\ell_{\alpha_0})\langle\overline{u},[w]\rangle.
		\]
\end{enumerate}
\end{lemma}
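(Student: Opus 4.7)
The plan is to establish the cycle of implications $(i)\Rightarrow(ii)\Rightarrow(iii)\Rightarrow(i)$; the middle implication is tautological by specializing the universal quantifier over $\alpha$ in $(ii)$ to the single $\alpha_0$.

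For $(i)\Rightarrow(ii)$, I would first apply Proposition \ref{proposition:iff} to factor $u=g^{\ast}\tilde{u}$ for some $\tilde{u}\in H^2(K(\pi_1(X),1);\mathbb{R})$ and some $g\colon X\to K(\pi_1(X),1)$ inducing an isomorphism on fundamental groups. The key observation is that, when $u$ is aspherical, the pairing $\langle\overline{u},[v]\rangle$ depends only on the homomorphism $v_{\ast}\colon\pi_1(T^2)\to\pi_1(X)$: if $v,v'\colon T^2\to X$ induce the same $\pi_1$-action, then $g\circ v\simeq g\circ v'$ in the aspherical target $K(\pi_1(X),1)$, so $g_{\ast}(v_{\ast}[T^2]-v'_{\ast}[T^2])=0$ and hence $\langle u,v_{\ast}[T^2]-v'_{\ast}[T^2]\rangle=0$. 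Now represent $[v]\in\pi_1(\mathcal{L}_{\alpha^{q+k\ell}}X)$ in the centralizer model of Proposition \ref{proposition:centralizer} by an element $c\in C_{\pi_1(X)}(\gamma_{\alpha}^{q+k\ell})$, and use Lemma \ref{lemma:bg} to produce $m\in\{1,\ldots,(\pi_1(X):A)\}$ with $c^m\in C_{\pi_1(X)}(\gamma_{\alpha})$. Since $[c^m,\gamma_{\alpha}]=1$ in $\pi_1(X)$, a map $w\colon T^2\to X$ with $w(0,\cdot)=z_{\alpha}$ and $w|_{S^1\times\{0\}}$ representing $c^m$ exists (by filling the commutator 2-cell) and defines $[w]\in\pi_1(\mathcal{L}_{\alpha}X)$. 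The degree-$m$ and degree-$(q+k\ell)$ self-covers of $T^2$ then yield
\[
[T_{c^m,\gamma_{\alpha}^{q+k\ell}}]=m\,[T_{c,\gamma_{\alpha}^{q+k\ell}}]=(q+k\ell)\,[T_{c^m,\gamma_{\alpha}}]
\]
in $H_2(K(\pi_1(X),1);\mathbb{Z})$, and pairing against $\tilde{u}$ produces $m\langle\overline{u},[v]\rangle=(q+k\ell)\langle\overline{u},[w]\rangle$.

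For $(iii)\Rightarrow(i)$, I would argue by contradiction. Suppose $\phi\in\pi_2(X)$ satisfies $r:=\langle u,\phi\rangle\neq 0$. The sphere-insertion construction (collapse a small embedded disk in $T^2$ to a point and map the resulting sphere by $\phi$, while keeping the $t$-direction equal to $z_{\beta}$) yields, for any $\beta$, an element $[v]\in\pi_1(\mathcal{L}_{\beta}X)$ with $\langle\overline{u},[v]\rangle=\langle u,\phi\rangle$, since $v_{\ast}[T^2]$ coincides with the Hurewicz image of $\phi$ in $H_2(X;\mathbb{Z})$. Applying (iii) with $\beta=\alpha_0^{q+k\ell}$ to such an $[v]_k$ gives $m_k\in\{1,\ldots,(\pi_1(X):A)\}$ and $[w]_k$ with $m_kr=(q+k\ell)\langle\overline{u},[w]_k\rangle$, so $m_kr\in(q+k\ell)R$, where $R:=\langle\overline{u},\pi_1(\mathcal{L}_{\alpha_0}X)\rangle\subset\mathbb{R}$. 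But $R$ is contained in $\langle u,H_2(X;\mathbb{Z})\rangle$ via $[w]\mapsto w_{\ast}[T^2]$, a finitely generated subgroup of $\mathbb{R}$ since $X$ is a finite CW-complex; consequently $R$ is a finitely generated torsion-free abelian group, in which every nonzero element is divisible by only finitely many positive integers. Hence $\bigcup_{m=1}^{(\pi_1(X):A)}\{n\in\mathbb{N}:mr\in nR\}$ is finite, contradicting the existence of $m_k$ for all $k$ as $q+k\ell\to\infty$. The main obstacle is precisely this $(iii)\Rightarrow(i)$ step, where the finite generation of $R$ must be exploited to upgrade the divisibility condition $m_kr\in(q+k\ell)R$ into a genuine contradiction.
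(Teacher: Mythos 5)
Your proposal is correct and follows essentially the same route as the paper: for (i)$\Rightarrow$(ii) you factor $u$ through $K(\pi_1(X),1)$ via Proposition \ref{proposition:iff}, pass to the centralizer model of Proposition \ref{proposition:centralizer}, invoke Lemma \ref{lemma:bg}, and use the torus identity $m[T_{c,\gamma_{\alpha}^{q+k\ell}}]=(q+k\ell)[T_{c^m,\gamma_{\alpha}}]$, which is exactly the content of the paper's figure. For (iii)$\Rightarrow$(i) you use the same sphere-insertion homomorphism $\iota_{n\ast}\colon\pi_2(X)\to\pi_1(\mathcal{L}_{\alpha_0^n}X)$; your finishing step (that $m_kr\in(q+k\ell)R$ is impossible for large $k$ because $R\subset\langle u,H_2(X;\mathbb{Z})\rangle$ is finitely generated torsion-free, so a fixed nonzero element has only finitely many divisors) is a slightly cleaner bookkeeping of the paper's case analysis via the index $(\mathbb{Q}\langle u,\pi_2(X)\rangle\cap\langle\overline{u},\pi_1(\mathcal{L}_{\alpha}X)\rangle:\langle u,\pi_2(X)\rangle)$, but it rests on the same finite-generation input and is not a genuinely different argument.
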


\begin{proof}
(i)$\Rightarrow$(ii): Suppose that $u$ is aspherical.
Fix $\alpha\in [S^1,X]$ and $k\in\mathbb{Z}_{\geq0}$.
Let $f\colon X\to K=K(\pi_1(X),1)$ be the classifying map.
Hence $f$ induces an isomorphism of fundamental groups.
Applying Proposition \ref{proposition:iff},
there exists $\Omega\in H^2(K;\mathbb{R})$ such that
\[
	u=f^{\ast}\Omega.
\]
For every $[v\colon\mathbb{T}^2\to X]\in\pi_1(\mathcal{L}_{\alpha^{q_{\alpha}+k\ell_{\alpha}}}X)$, we have
\[
	\langle\overline{u},[v]\rangle%
	=\langle f^{\ast}\overline{\Omega},[v]\rangle%
	=\langle\overline{\Omega},[f\circ v]\rangle.
\]
We note that $[f\circ v]\in\pi_1\bigl(\mathcal{L}_{f_{\ast}(\alpha^{q_{\alpha}+k\ell_{\alpha}})}K\bigr)=\pi_1\bigl(\mathcal{L}_{f_{\ast}(\alpha)^{q_{\alpha}+k\ell_{\alpha}}}K\bigr)$,
where $f_{\ast}\colon[S^1,X]\to[S^1,K]$ is the map induced by $f$.
Moreover, Proposition \ref{proposition:centralizer} implies that
\[
	\pi_1\bigl(\mathcal{L}_{f_{\ast}(\alpha)^{q_{\alpha}+k\ell_{\alpha}}}K\bigr)%
	\cong C_{\pi_1(K)}\left(f_{\ast}(\gamma_{\alpha})^{q_{\alpha}+k\ell_{\alpha}}\right),
\]
where $f_{\ast}(\gamma_{\alpha})\in\pi_1(K)$ is a representative of the conjugacy class $f_{\ast}(\alpha)\in [S^1,K]$.

Denote by $c\in C_{\pi_1(K)}\left(f_{\ast}(\gamma_{\alpha})^{q_{\alpha}+k\ell_{\alpha}}\right)$ the image of $[f\circ v]\in\pi_1\bigl(\mathcal{L}_{f_{\ast}(\alpha)^{q_{\alpha}+k\ell_{\alpha}}}K\bigr)$
under the above isomorphism.
Applying Lemma \ref{lemma:bg} for $c$, we conclude that
there exists $m\in\{1,\ldots,(\pi_1(K):A)\}$ such that
\[
	c^m\in C_{\pi_1(K)}\bigl(f_{\ast}(\gamma_{\alpha})\bigr)\cong\pi_1\bigl(\mathcal{L}_{f_{\ast}(\alpha)}K\bigr).
\]
It implies that there exists $[w_0]\in\pi_1\bigl(\mathcal{L}_{f_{\ast}(\alpha)}K\bigr)$ such that
\[
	m\langle\overline{\Omega},[f\circ v]\rangle=(q_{\alpha}+k\ell_{\alpha})\langle\overline{\Omega},[w_0]\rangle.
\]
{\center
\begin{tikzpicture}
\draw (0,0) rectangle (1,1);
\draw (0,1) rectangle (1,2);
\draw (0,3) rectangle (1,4);
\fill[blue,opacity=.2] (0,0) rectangle (1,4);
\draw (1,0) rectangle (2,1);
\draw (3,0) rectangle (4,1);
\draw (3,3) rectangle (4,4);
\fill[red,opacity=.2] (0,0) rectangle (4,1);
\node at (2.52,0.5) {$\cdots$};
\node at (2.02,3.5) {$\cdots$};
\node at (0.5,2.6) {$\vdots$};
\node at (3.5,2.1) {$\vdots$};
\node at (2,2) {$\cdot$};
\node[xshift=3.535534,yshift=3.535534] at (2,2) {$\cdot$};
\node[xshift=-3.535534,yshift=-3.535534] at (2,2) {$\cdot$};
\foreach \x in {0.5,1.5,3.5}
	\node[below] at (\x,0.05) {$c$};
\foreach \y in {0.5,1.5,3.5}
	\node[left] at (0.05,\y) {$f_{\ast}(\gamma_{\alpha})$};
\draw [decorate,decoration={brace,amplitude=7pt},xshift=-32pt,yshift=0pt]%
(0,0) -- (0,4) node [midway,xshift=-1.1cm] {$q_{\alpha}+k\ell_{\alpha}$};
\draw [decorate,decoration={brace,mirror,amplitude=7pt},xshift=0pt,yshift=-10pt]%
(0,0) -- (4,0) node [midway,yshift=-0.5cm] {$m$};
\node[blue,above] at (0.5,4) {$\langle\overline{\Omega},[f\circ v]\rangle$};
\node[red,right] at (4,0.5) {$\langle\overline{\Omega},[w_0]\rangle$};
\end{tikzpicture}

}

Let $[w]\in\pi_1(\mathcal{L}_{\alpha}X)$ such that $f_{\ast}[w]=[w_0]$.
Then we have
\begin{align*}
	m\langle\overline{u},[v]\rangle%
	&=m\langle\overline{\Omega},[f\circ v]\rangle%
	=(q_{\alpha}+k\ell_{\alpha})\langle\overline{\Omega},[w_0]\rangle%
	=(q_{\alpha}+k\ell_{\alpha})\langle\overline{\Omega},[f\circ w]\rangle\\
	&=(q_{\alpha}+k\ell_{\alpha})\langle f^{\ast}\overline{\Omega},[w]\rangle%
	=(q_{\alpha}+k\ell_{\alpha})\langle \overline{u},[w]\rangle.
\end{align*}
Thus (ii) holds.

(iii)$\Rightarrow$(i): Suppose that $u$ is not aspherical.
Then $\langle u,\pi_2(X)\rangle$ is a non-trivial finitely generated $\mathbb{Z}$-submodule of $\mathbb{R}$.
We fix $\alpha\in [S^1,X]$ and choose a loop $z_{\alpha}$ representing $\alpha$.

We denote by $\Omega_{z_{\alpha}(0)}X\subset\mathcal{L}X$ the space of loops with base point $z_{\alpha}(0)$.
We define a map $\iota_1\colon\Omega_{z_{\alpha}(0)}X\to\mathcal{L}_{\alpha}X$ by concatenating a loop $x\in\Omega_{z_{\alpha}(0)}X$ with $z_{\alpha}$.
Then $\iota_1$ induces the homomorphism
\[
	\iota_{1\ast}\colon\pi_2(X,z_{\alpha}(0))\to\pi_1(\mathcal{L}_{\alpha}X,z_{\alpha}),
\]
where we used the fact that $\pi_1\bigl(\Omega_{z_{\alpha}(0)}X,z_{\alpha}(0)\bigr)\cong\pi_2(X,z_{\alpha}(0))$.
Similarly, for all $n\in\mathbb{N}$,
we can define the homomorphisms
\[
	\iota_{n\ast}\colon\pi_2(X)\to\pi_1(\mathcal{L}_{\alpha^n}X).
\]
Choose $s\in\pi_2(X)$ such that $\langle u,s\rangle\neq 0$.
Then we have
\[
	\langle u,s\rangle%
	\in\langle u,\pi_2(X)\rangle%
	=\langle\overline{u},\iota_{n\ast}(\pi_2(X))\rangle
	\subset\langle\overline{u},\pi_1(\mathcal{L}_{\alpha^n}X)\rangle
\]
for any $n\in\mathbb{N}$.
Hence it is enough to show that
for every $m=1,\ldots,(\pi_1(X):A)$ and every $[w]\in\pi_1(\mathcal{L}_{\alpha}X)$ we have
\[
	m\langle u,s\rangle\neq(q_{\alpha}+k\ell_{\alpha})\langle\overline{u},[w]\rangle
\]
when $k$ is large.

We note that
\[
	\langle u,\pi_2(X)\rangle%
	\subset\mathbb{Q}\langle u,\pi_2(X)\rangle\cap\langle\overline{u},\pi_1(\mathcal{L}_{\alpha}X)\rangle%
	\subset\langle\overline{u},\pi_1(\mathcal{L}_{\alpha}X)\rangle\subset\mathbb{R}.
\]
If
$\langle\overline{u},[w]\rangle\in\langle\overline{u},\pi_1(\mathcal{L}_{\alpha}X)\rangle\setminus(\mathbb{Q}\langle u,\pi_2(X)\rangle\cap\langle\overline{u},\pi_1(\mathcal{L}_{\alpha}X)\rangle)$,
then we have
\[
	\langle\overline{u},[w]\rangle\neq\frac{m}{q_{\alpha}+k\ell_{\alpha}}\langle u,s\rangle
\]
for any $m=1,\ldots,(\pi_1(X):A)$ and any $k\in\mathbb{Z}_{\geq 0}$.
If $\langle\overline{u},[w]\rangle\in\mathbb{Q}\langle u,\pi_2(X)\rangle\cap\langle\overline{u},\pi_1(\mathcal{L}_{\alpha}X)\rangle$
and $k$ is so large that
\[
	q_{\alpha}+k\ell_{\alpha}>(\pi_1(X):A)(\mathbb{Q}\langle u,\pi_2(X)\rangle\cap\langle\overline{u},\pi_1(\mathcal{L}_{\alpha}X)\rangle:\langle u,\pi_2(X)\rangle),
\]
then
\[
	\langle u,s\rangle\neq\frac{q_{\alpha}+k\ell_{\alpha}}{m}\langle\overline{u},[w]\rangle
\]
for any $m=1,\ldots,(\pi_1(X):A)$.

Since (iii) immediately follows from (ii), Lemma \ref{lemma:key} is proved.
\end{proof}

\begin{remark}
In general, we have
\[
	\langle\overline{u},\pi_1(\mathcal{L}_{\alpha^n}X)\rangle\supset n\langle\overline{u},\pi_1(\mathcal{L}_{\alpha}X)\rangle
\]
for any $u\in H^2(X;\mathbb{R})$, $\alpha\in [S^1,X]$ and $n\in\mathbb{N}$.
\end{remark}

\subsubsection{$\mathrm{R}$-groups}

Here we consider $\mathrm{R}$-groups.

\begin{definition}[\cite{Ko,Ku}]\label{definition:R}
A group $G$ is called an \textit{$\mathrm{R}$-group} if the equality $g^n=h^n$ implies $g=h$,
where $g,h$ are any elements in $G$ and $n$ is any natural number.
\end{definition}

Let $G$ be an $\mathrm{R}$-group.
Then we have

\begin{proposition}\label{proposition:R}
Let $g\in G$ and $n\in\mathbb{N}$.
If $c\in C_G(g^n)$, then $c\in C_G(g)$.
\end{proposition}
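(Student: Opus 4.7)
The plan is to exploit the defining property of an $\mathrm{R}$-group directly; the proof is a one-line manipulation rather than something requiring auxiliary constructions.

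First I would rewrite the hypothesis $c \in C_G(g^n)$ as $c g^n c^{-1} = g^n$. The main observation is that conjugation commutes with taking powers, so
\[
    (c g c^{-1})^n = c g^n c^{-1} = g^n.
\]
Setting $h = cgc^{-1}$, this reads $h^n = g^n$. Invoking Definition \ref{definition:R} of an $\mathrm{R}$-group then yields $h = g$, that is, $cgc^{-1} = g$, which is precisely the statement $c \in C_G(g)$.

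The only ``step'' worth flagging is the algebraic identity $(cgc^{-1})^n = cg^nc^{-1}$, which holds in every group and needs no hypothesis on $G$. After that the R-group axiom does all the work, so there is no real obstacle — the proposition is essentially a restatement of the definition applied to a conjugate pair. I do not anticipate needing any additional lemmas from the preceding material.
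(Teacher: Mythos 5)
Your proof is correct and is exactly the argument the paper gives: rewrite the hypothesis as $(cgc^{-1})^n = cg^nc^{-1} = g^n$ and apply the $\mathrm{R}$-group property to conclude $cgc^{-1}=g$. Nothing further is needed.
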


\begin{proof}
Let $c\in C_G(g^n)$.
Then the equality $(cgc^{-1})^n=cg^nc^{-1}=g^n$ implies $cgc^{-1}=g$.
Hence $c\in C_G(g)$.
\end{proof}

Combining with the proof of Lemma \ref{lemma:key}, we then obtain

\begin{lemma}\label{lemma:keynilp}
Let $X$ be a finite CW-complex whose fundamental group is an $\mathrm{R}$-group and $u\in H^2(X;\mathbb{R})$.
Then the following conditions are equivalent.

\begin{enumerate}
	\item $u$ is aspherical,
	\item for every $\alpha\in [S^1,X]$ and $n\in\mathbb{N}$, we have
		\[
			\langle\overline{u},\pi_1(\mathcal{L}_{\alpha^n}X)\rangle=%
			n\langle \overline{u},\pi_1(\mathcal{L}_{\alpha}X)\rangle,
		\]
	\item there exists $\alpha_0\in [S^1,X]$ such that
		for every $n\in\mathbb{N}$, we have
		\[
			\langle\overline{u},\pi_1(\mathcal{L}_{\alpha_0^n}X)\rangle=%
			n\langle \overline{u},\pi_1(\mathcal{L}_{\alpha_0}X)\rangle.
		\]
\end{enumerate}
\end{lemma}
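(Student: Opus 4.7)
The plan is to parallel the proof of Lemma~\ref{lemma:key}, swapping the virtually abelian input (Lemma~\ref{lemma:bg}) for the sharper R-group input (Proposition~\ref{proposition:R}). Because the R-group property gives $C_G(g^n) \subseteq C_G(g)$ on the nose (rather than only after passing to some power $c^m$), the conclusion will upgrade from the equality ``$m \langle\overline{u},[v]\rangle = n\langle\overline{u},[w]\rangle$'' obtained in Lemma~\ref{lemma:key} to an equality of subgroups.

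For (i)$\Rightarrow$(ii): assume $u$ is aspherical and, by Proposition~\ref{proposition:iff}, pick a classifying map $f\colon X \to K := K(\pi_1(X),1)$ inducing an isomorphism on $\pi_1$ together with $\Omega \in H^2(K;\mathbb{R})$ such that $u = f^{\ast}\Omega$. Fix $\alpha \in [S^1,X]$, $n \in \mathbb{N}$, and $[v] \in \pi_1(\mathcal{L}_{\alpha^n}X)$; under Haefliger's isomorphism (Proposition~\ref{proposition:centralizer}) the push-forward $[f \circ v] \in \pi_1(\mathcal{L}_{f_{\ast}(\alpha)^n}K)$ corresponds to some $c \in C_{\pi_1(K)}(f_{\ast}(\gamma_{\alpha})^n)$. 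Since $\pi_1(K) \cong \pi_1(X)$ is an R-group, Proposition~\ref{proposition:R} forces $c \in C_{\pi_1(K)}(f_{\ast}(\gamma_{\alpha}))$, so $c$ arises from a class $[w_0] \in \pi_1(\mathcal{L}_{f_{\ast}(\alpha)}K)$, which I lift along $f_{\ast}^{-1}$ to $[w] \in \pi_1(\mathcal{L}_{\alpha}X)$. The crucial geometric point is that in the aspherical space $K$ a torus map is determined up to homotopy by its restrictions to the two 1-cycles: composing any representative $T_{g,c}\colon\mathbb{T}^2 \to K$ of the pair $(g,c)$ with the degree-$n$ self-cover $(s,t)\mapsto(s,nt)$ of $\mathbb{T}^2$ gives a representative of $(g^n,c)$, whence $(T_{g^n,c})_{\ast}[\mathbb{T}^2] = n\,(T_{g,c})_{\ast}[\mathbb{T}^2]$ in $H_2(K;\mathbb{Z})$. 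Applied to $f \circ v$ and $w_0$, this yields $\langle\overline{u},[v]\rangle = n\langle\overline{u},[w]\rangle$, so $\langle\overline{u},\pi_1(\mathcal{L}_{\alpha^n}X)\rangle \subseteq n\langle\overline{u},\pi_1(\mathcal{L}_{\alpha}X)\rangle$; combined with the reverse inclusion stated in the Remark following Lemma~\ref{lemma:key}, this is the desired equality.

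The implication (ii)$\Rightarrow$(iii) is immediate. For (iii)$\Rightarrow$(i), I argue by contrapositive: suppose $u$ is not aspherical. Reusing the maps $\iota_{n\ast}\colon \pi_2(X) \to \pi_1(\mathcal{L}_{\alpha_0^n}X)$ from the proof of Lemma~\ref{lemma:key}, which satisfy $\langle\overline{u},\iota_{n\ast}(s)\rangle = \langle u,s\rangle$, the hypothesis (iii) forces
\[
\langle u,\pi_2(X)\rangle \subseteq \langle\overline{u},\pi_1(\mathcal{L}_{\alpha_0^n}X)\rangle = nG \quad \text{for every } n \in \mathbb{N},
\]
where $G := \langle\overline{u},\pi_1(\mathcal{L}_{\alpha_0}X)\rangle$. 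Because $\overline{u}$ factors through $H_2(X;\mathbb{Z})$, the group $G$ lies in $\langle u, H_2(X;\mathbb{Z})\rangle$, which is a finitely generated subgroup of $\mathbb{R}$ since $X$ is a finite CW-complex. Thus $G$ is finitely generated and torsion-free, so $\bigcap_{n\in\mathbb{N}} nG = 0$, forcing $\langle u,\pi_2(X)\rangle = 0$, a contradiction.

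The main obstacle I expect is cleanly establishing the degree-$n$ torus cover relationship used in (i)$\Rightarrow$(ii): one must be careful that passing to $K(\pi_1(X),1)$ really does let us replace $[v]$ by the torus map built from the pair $(f_{\ast}(\gamma_{\alpha})^n, c)$ up to homotopy, so that its class in $H_2(K;\mathbb{Z})$ genuinely rescales by $n$. This rests on asphericity of $K$, which is why the proof route through Proposition~\ref{proposition:iff} is essential.
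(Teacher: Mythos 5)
Your proposal is correct and is exactly the route the paper intends: the paper's "proof" of Lemma \ref{lemma:keynilp} is the one-line instruction to rerun the argument of Lemma \ref{lemma:key} with Proposition \ref{proposition:R} replacing Lemma \ref{lemma:bg}, which in particular forces $m=1$ and turns the relation $m\langle\overline{u},[v]\rangle=(q+k\ell)\langle\overline{u},[w]\rangle$ into the subgroup equality, via the same degree-$n$ torus-cover identity in $K(\pi_1(X),1)$ that underlies the grid picture there. Your (iii)$\Rightarrow$(i) step, packaging the paper's divisibility argument as $\bigcap_n nG=0$ for the finitely generated subgroup $G\subset\mathbb{R}$, is a clean and valid reformulation of the same idea.
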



\section{Proof of Theorem \ref{theorem:main}}\label{section:main}

In this section, we state a refined version (Theorem \ref{theorem:main3}) of Theorem \ref{theorem:main} and prove the theorems.
Let $(M,\omega)$ be a closed symplectic manifold.
We recall that an isolated periodic orbit $x$ of $H$ is said to be \textit{homologically non-trivial}
if for some lift $\bar{x}\in\overline{\mathcal{L}_{\alpha}M}$ of $x$, the local Floer homology $\mathrm{HF}^{\mathrm{loc}}(H,\bar{x})$ of $H$ at $\bar{x}$ is non-zero (see \cite {GG10} for details).
Every non-degenerate fixed point $x$ is homologically non-trivial
since we have
\[
	\mathrm{HF}_{\ast}^{\mathrm{loc}}(H,\bar{x})\cong%
	\begin{cases}%
		\mathbb{Z}/2\mathbb{Z} & \text{if\: $\ast=\mu_{\mathrm{CZ}}(H,\bar{x})$},\\
		0 & \text{otherwise},
	\end{cases}
\]
where $\mu_{\mathrm{CZ}}(H,\bar{x})$ is the Conley--Zehnder index of $\bar{x}$.
Then we can refine Theorem \ref{theorem:main} as follows (see also \cite[Theorem 3.1]{Gu13}).

\begin{theorem}\label{theorem:main3}
Assume that $\omega$ is aspherical and $\pi_1(M)$ is either a virtually abelian group or an $\mathrm{R}$-group.
Let $H\colon S^1\times M\to\mathbb{R}$ be a Hamiltonian
having an isolated and homologically non-trivial one-periodic orbit $x$ in the homotopy class $\alpha$
such that $[\alpha]\neq 0$ in $H_1(M;\mathbb{Z})/\mathrm{Tor}$, $\mathcal{P}_1(H;[\alpha])$ is finite
and $\omega$ is $\alpha$-toroidally rational.
Then for every sufficiently large prime $p_i\in P_{q_{\alpha},\ell_{\alpha}}$, the Hamiltonian $H$ has a simple periodic orbit in the homotopy class $\alpha^{p_i}$
and with period either $p_i$ or $p_{i+1}$.
Moreover, when $\pi_1(M)$ is an $\mathrm{R}$-group, then the finiteness condition on $\mathcal{P}_1(H;[\alpha])$
can be replaced by that on $\mathcal{P}_1(H;\alpha)$.
\end{theorem}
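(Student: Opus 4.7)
The plan is to argue by contradiction, adapting Gürel's strategy from \cite{Gu13} and \cite{GG16a} to the filtered Floer--Novikov setting. Suppose, for some arbitrarily large prime $p=p_i\in P_{q,\ell}$, that $H$ has no simple periodic trajectory of period $p_i$ or $p_{i+1}$ in the class $\alpha^{p_i}$. Primality of $p_i$ and $p_{i+1}$ then forces every $p_i$-periodic (resp.\ $p_{i+1}$-periodic) orbit of $\varphi_H$ representing $\alpha^{p_i}$ (resp.\ $\alpha^{p_{i+1}}$) to be an iteration of some $y\in\mathcal{P}_1(H;\alpha)$. Viewed as one-periodic orbits of $H^{\natural p_i}$, the set $\mathcal{P}_1(H^{\natural p_i};\alpha^{p_i})$ is then a controlled finite collection of iterates $y^{p_i}$, thanks to the finiteness of $\mathcal{P}_1(H;[\alpha])$ (or $\mathcal{P}_1(H;\alpha)$ in the $\mathrm{R}$-group case).

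Since $x$ is homologically non-trivial, a capping $\bar{x}=[x,\Pi]$ exists with $\mathrm{HF}^{\mathrm{loc}}(H,\bar{x})\neq 0$. Using the standard machinery of admissible iterations (see \cite{GG10}), one selects the large primes $p$ so that the iterate $\bar{x}^p=[x^p,\Pi^p]$ is isolated as a one-periodic orbit of $H^{\natural p}$ with $\mathrm{HF}^{\mathrm{loc}}(H^{\natural p},\bar{x}^p)\cong \mathrm{HF}^{\mathrm{loc}}(H,\bar{x})$ up to a predictable index shift, with action $\mathcal{A}_{H^{\natural p}}(\bar{x}^p)=p\,\mathcal{A}_H(\bar{x})$ and mean index $\Delta_{H^{\natural p}}(\bar{x}^p)=p\,\Delta_H(\bar{x})$.

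The main step is to choose, for each such $p$, a narrow action window $I_p$ centered at $p\,\mathcal{A}_H(\bar{x})$ whose endpoints avoid $\mathrm{Spec}(H^{\natural p};\alpha^p)$, and to single out $\bar{x}^p$ as the unique generator of its Conley--Zehnder index in $\mathrm{HFN}^{I_p}(H^{\natural p};\alpha^p)$. Under the contradiction hypothesis every capped generator is of the form $\bar{y}^p\#[v]$ with $y\in\mathcal{P}_1(H;\alpha)$ and $[v]\in\pi_1(\mathcal{L}_{\alpha^p}M)$, with action $p\,\mathcal{A}_H(\bar{y})-\langle\overline{[\omega]},[v]\rangle$. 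This is the point where Lemma \ref{lemma:key} (or Lemma \ref{lemma:keynilp}) is essential: combined with the $\alpha$-toroidal rationality $\langle\overline{[\omega]},\pi_1(\mathcal{L}_\alpha M)\rangle=h_\alpha\mathbb{Z}$, it forces every value $\langle\overline{[\omega]},[v]\rangle$ for $[v]\in\pi_1(\mathcal{L}_{\alpha^p}M)$ to lie in the finite union $\bigcup_{m=1}^{(\pi_1(M):A)}\tfrac{p\,h_\alpha}{m}\mathbb{Z}$ (respectively in $p\,h_\alpha\mathbb{Z}$ in the $\mathrm{R}$-group case). For $p$ large the minimum positive gap in this shift set grows at least linearly in $p$, so a window of any fixed positive width eventually contains only $\bar{x}^p$ and finitely many iterated companions $\bar{y}^p\#[v]$ of controlled Conley--Zehnder index, none of which can cancel the class of $\bar{x}^p$ for index-parity reasons once $p$ is large enough.

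One then closes the argument by a continuation to a $C^2$-small Hamiltonian $K$ whose flow admits no one-periodic orbit in the nonzero class $[\alpha^p]=p[\alpha]\neq 0$ in $H_1(M;\mathbb{Z})/\mathrm{Tor}$, so that $\mathrm{HFN}(K;\alpha^p)=0$. The continuation-action estimate of Subsection \ref{subsection:floernovikov} transports this vanishing back to $H^{\natural p}$, forcing $[\bar{x}^p]=0$ in $\mathrm{HFN}^{I_p}(H^{\natural p};\alpha^p)$; this contradicts the non-vanishing of the local Floer homology of $\bar{x}^p$. The principal obstacle is paragraph three: calibrating $I_p$ uniformly in $p\in P_{q,\ell}$ so that no spurious capped iterate $\bar{y}^p\#[v]$ interferes with $\bar{x}^p$, which is precisely where the congruence $p\equiv q\pmod{\ell}$ enters via Lemma \ref{lemma:key}. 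The $\mathrm{R}$-group case is cleaner because Lemma \ref{lemma:keynilp} gives the exact identity $\langle\overline{[\omega]},\pi_1(\mathcal{L}_{\alpha^p}M)\rangle=p\langle\overline{[\omega]},\pi_1(\mathcal{L}_\alpha M)\rangle$, and removes the need for a congruence constraint on $p$.
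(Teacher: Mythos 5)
Your first three paragraphs track the paper's argument closely: the reduction of $\mathcal{P}_1(H^{\natural p_i};\alpha^{p_i})$ to $p_i$-th iterates under the contradiction hypothesis, the persistence of the nontrivial local Floer homology of $\bar{x}$ under admissible iteration, and the use of Lemma \ref{lemma:key} (resp.\ Lemma \ref{lemma:keynilp}) together with $\alpha$-toroidal rationality to show that $0$ is the only point of $\mathrm{Spec}(H^{\natural p_i};\alpha^{p_i})$ in a window $[-p_ic,p_ic)$ that dilates linearly with $p_i$. (One small correction there: the reason no other generator interferes with $\bar{x}^{p_i}$ is not ``index parity'' but the fact that, with a single action value in the window, all connecting trajectories have zero energy and are constant, so $\mathrm{HFN}^{[-p_ic,p_ic)}$ splits as the direct sum of the local Floer homologies; nonvanishing of one summand suffices.)

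The genuine gap is your closing step. Comparing $H^{\natural p}$ with a $C^2$-small Hamiltonian $K$ cannot work: the linear-homotopy continuation map shifts the action window by roughly $\int_0^1\max_M|H^{\natural p}_t|\,dt\sim p\!\int_0^1\max_M|H_t|\,dt$, which grows like $p$ with a constant you do not control, whereas your window has width $2pc$ with $c$ forced to be \emph{small} by the spectral-isolation requirement \eqref{eq:fracN}. So the quotient-inclusion map between $\mathrm{HFN}^{I_p}(H^{\natural p};\alpha^p)$ and its shift by twice that constant is not an isomorphism (the shifted window sweeps across other spectrum values, and may not even overlap $I_p$), and no contradiction with $\mathrm{HFN}(K;\alpha^p)=0$ follows; indeed a nonzero local class can perfectly well die in the total Floer--Novikov homology, which is exactly why the theorem cannot promise an orbit of period $p_i$ alone. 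The paper instead compares $H^{\natural p_i}$ with $H^{\natural p_{i+1}}$: the continuation shift is then $\delta=C(p_{i+1}-p_i)$, which is $o(p_i)$ by the Baker--Harman--Pintz prime-gap bound, hence eventually dominated by $p_ic$; the commutative triangle
\[
\mathrm{HFN}^{[-K,K)}(H^{\natural p_i};\alpha^{p_i})\to
\mathrm{HFN}^{[-K+\delta,K+\delta)}(H^{\natural p_{i+1}};\alpha^{p_i})\to
\mathrm{HFN}^{[-K+2\delta,K+2\delta)}(H^{\natural p_i};\alpha^{p_i}),
\]
whose composition \emph{is} a quotient-inclusion isomorphism, then forces the middle group to be nonzero and produces a $p_{i+1}$-periodic trajectory in the class $\alpha^{p_i}$, automatically simple since $[\alpha^{p_i}]=p_i[\alpha]$ is not divisible by $p_{i+1}$ in $H_1(M;\mathbb{Z})/\mathrm{Tor}$. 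Your proposal never invokes the prime-gap estimate and never explains why $p_{i+1}$ appears in the conclusion; without replacing your final paragraph by an argument of this kind, the proof does not close.
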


Here, when $\pi_1(M)$ is virtually abelian, we choose an abelian subgroup $A<\pi_1(M)$ of finite index,
$\gamma_{\alpha}\in\pi_1(M)$ representing $\alpha$,
and $\ell_{\alpha}\in\{1,\ldots,(\pi_1(M):A)\}$ such that $\gamma_{\alpha}^{\ell_{\alpha}}\in A$.
When $\pi_1(M)$ is an $\mathrm{R}$-group, we may choose an arbitrary positive integer $\ell_{\alpha}$.
As above, $q_{\alpha}$ is an arbitrary positive integer coprime to $\ell_{\alpha}$.
The proof of Theorem \ref{theorem:main3} is inspired by the argument by G\"urel \cite{Gu13}.

\begin{proof}[Proof: the virtually abelian case]
Since $\mathcal{P}_1(H;[\alpha])$ is finite, there exist finitely many distinct homotopy classes $\alpha_j\in [S^1,M]$
representing $[\alpha]\in H_1(M;\mathbb{Z})/\mathrm{Tor}$ such that every $x\in\mathcal{P}_1(H;[\alpha])$ is contained in one of $\alpha_j$'s.
As in \cite{GG16}, one can show that for every sufficiently large prime $p$, the classes $\alpha_j^p$ are all distinct
(If we replace the finiteness condition on $\mathcal{P}_1(H;[\alpha])$ with that on $\mathcal{P}_1(H;\alpha)$,
then there might exist $\beta\neq\alpha$ such that $\beta^p=\alpha^p$ even when $p$ is large.
However, if $\pi_1(M)$ is an $\mathrm{R}$-group, then $\gamma_{\alpha}^p$ has the unique $p$-th root $\gamma_{\alpha}$ and hence the conjugacy class $\alpha^p$
has the unique $p$-th root $\alpha$).

Fix a reference loop $z_{\alpha}\in\alpha$ and
choose the iterated loop $z_{\alpha}^p$ as the reference loop for $\alpha^p$.
Denote by $x_k$ the elements of $\mathcal{P}_1(H;\alpha)$.
We note that every sufficiently large prime $p$ is \textit{admissible} in the sense of \cite{GG10} for all orbits $x_k$
(i.e., $\lambda^p\neq 1$ for all eigenvalues $\lambda\neq 1$ of $(d\varphi_H)_{x_k}\colon T_{x_k}M\to T_{x_k}M$).
Since $x$ is isolated and homologically non-trivial, we have
$\mathrm{HF}_{\ast}^{\mathrm{loc}}(H,\bar{x})\neq 0$
for some lift $\bar{x}=[x,\Pi]\in\overline{\mathcal{L}_{\alpha}M}$ of $x$ and some $\ast\in\mathbb{Z}$.
By \cite[Theorem 1.1 and Remark 1.1]{GG10}, when $p$ is admissible,
we can think of $x^p$ as an isolated one-periodic orbit of $H^{\natural p}$ and
we have
\[
	\mathrm{HF}_{\ast+s_p}^{\mathrm{loc}}(H^{\natural p},\bar{x}^p)\cong \mathrm{HF}_{\ast}^{\mathrm{loc}}(H,\bar{x})
\]
for some $s_p$, where $\bar{x}^p=[x^p,\Pi^p]\in\overline{\mathcal{L}_{\alpha^p}M}$.
Hence we have $\mathrm{HF}_{\ast+s_p}^{\mathrm{loc}}(H^{\natural p},\bar{x}^p)\neq 0$.

From now on, we only consider primes in $P_{q_{\alpha},\ell_{\alpha}}$ (see \eqref{eq:pl} in Section \ref{section2} for the definition).
Let $p_i\in P_{q_{\alpha},\ell_{\alpha}}$ be a sufficiently large prime satisfying the above conditions.
Assume that $H$ has no simple $p_i$-periodic orbit in $\alpha^{p_i}$.
Since $p_i$ is prime, all $p_i$-periodic orbits in $\alpha^{p_i}$
are the $p_i$-th iterations of one-periodic orbits in $\alpha$.
Hence there is an action-preserving one-to-one correspondence between
$\mathcal{P}_1(H^{\natural p_i};\alpha^{p_i})$
and the set of $p_i$-th iterations $\left\{\,y^{p_i}\relmiddle| y\in\mathcal{P}_1(H;\alpha)\,\right\}$.

By adding a constant to the Hamiltonian $H$, we can assume that the action of the lift $\bar{x}$ is
$\mathcal{A}_H(\bar{x})=0$.
Hence for all $n\in\mathbb{N}$, we have
\[
	\mathcal{A}_{H^{\natural n}}(\bar{x}^n)=n\mathcal{A}_H(\bar{x})=0.
\]
Since $\mathcal{P}_1(H;[\alpha])$ is finite and $\omega$ is $\alpha$-toroidally rational,
we can choose $c>0$ so small that for all $m=1,\ldots,(\pi_1(M):A)$
\begin{equation}\label{eq:fracN}
	[-c,c)\cap\left\{\,\mathcal{A}_H(\bar{y})-\frac{1}{m}\langle\overline{[\omega]},[w]\rangle\relmiddle|%
	\bar{y}\in\overline{\mathcal{P}}_1(H;\alpha),\ [w]\in\pi_1(\mathcal{L}_{\alpha}M)\,\right\}=\{0\}.
\end{equation}
In particular, when $m=1$, we have
\[
	[-c,c)\cap\mathrm{Spec}(H;\alpha)=\{0\}.
\]

Now we claim that
\[
	[-p_ic,p_ic)\cap\mathrm{Spec}(H^{\natural p_i};\alpha^{p_i})=\{0\}.
\]
To see this, choose $s\in[-p_ic,p_ic)\cap\mathrm{Spec}(H^{\natural p_i};\alpha^{p_i})$.
Then there exist $\bar{y}\in\overline{\mathcal{P}}_1(H;\alpha)$ and $[v]\in\pi_1(\mathcal{L}_{\alpha^{p_i}}M)$ such that
\[
	s=\mathcal{A}_{H^{\natural p_i}}(\bar{y}^{p_i}\#[v])=p_i\mathcal{A}_H(\bar{y})-\langle \overline{[\omega]},[v]\rangle.
\]
Since $\pi_1(M)$ is virtually abelian and $\omega$ is aspherical, by applying Lemma \ref{lemma:key} for $[v]$,
there exist $m\in\{1,\ldots,(\pi_1(M):A)\}$ and $[w]\in\pi_1(\mathcal{L}_{\alpha}M)$ such that
\[
	m\langle\overline{[\omega]},[v]\rangle=p_i\langle\overline{[\omega]},[w]\rangle.
\]
Therefore,
\[
	\lvert s\rvert=p_i\left\lvert\mathcal{A}_H(\bar{y})-\frac{1}{m}\langle\overline{[\omega]},[w]\rangle\right\rvert<p_ic.
\]
By \eqref{eq:fracN}, it concludes that
\[
	\mathcal{A}_H(\bar{y})-\frac{1}{m}\langle \overline{[\omega]},[w]\rangle=0.
\]
Thus we obtain $s=0$.

Hence zero is the only critical value of $\mathcal{A}_{H^{\natural p_i}}$ in $[-p_ic,p_ic)$.
Therefore,
\[
	\mathrm{HFN}_{\ast}^{[-p_ic,p_ic)}(H^{\natural p_i};\alpha^{p_i})\cong\mathrm{HF}_{\ast}^{\mathrm{loc}}(H^{\natural p_i},\bar{x}^{p_i})\oplus\cdots,
\]
where the dots represent the contributions of the local Floer homology groups of $\bar{x}_k^{p_i}$
whose $\mathcal{A}_{H^{\natural p_i}}(\bar{x}_k^{p_i})=0$ and $\mu_{\mathrm{CZ}}(H^{\natural p_i},\bar{x}_k^{p_i})=\ast+s_{p_i}$
for some lifts $\bar{x}_k\in\overline{\mathcal{P}}_1(H;\alpha)$.
For any $[v]\in\Gamma_{\alpha^{p_i}}$, we have
\[
	0\neq\mathrm{HF}_{\ast+s_{p_i}}^{\mathrm{loc}}(H^{\natural p_i},\bar{x}^{p_i})\cong\mathrm{HF}_{\ast+s_{p_i}-2\langle\overline{c_1},[v]\rangle}^{\mathrm{loc}}(H^{\natural p_i},\bar{x}^{p_i}\#[v]).
\]
Hence
\[
	\mathrm{HFN}_{\ast}^{[-p_ic,p_ic)}(H^{\natural p_i};\alpha^{p_i})\cong\mathrm{HF}_{\ast}^{\mathrm{loc}}(H^{\natural p_i},\bar{x}^{p_i})\oplus\cdots\neq 0.
\]

We set
\[
	C=\max\left\{\int_{S^1}\max_M H_t\,dt,0\right\}+\max\left\{-\int_{S^1}\min_M H_t\,dt,0\right\}.
\]
Since $p_{i+1}-p_i=o(p_i)$ as $i\to\infty$ (see, e.g., \cite[Theorem 3 (I)]{BHP}),
we may assume $p_i\in P_{q_{\alpha},\ell_{\alpha}}$ so large that $p_ic>6C(p_{i+1}-p_i)$.
Choose $K>0$ such that
\[
	p_ic-4C(p_{i+1}-p_i)<K<p_ic-2C(p_{i+1}-p_i).
\]
Then we have
\begin{equation}\label{eq:inequality}
	-p_ic<-K<-K+2C(p_{i+1}-p_i)<0<K<K+2C(p_{i+1}-p_i)<p_ic,
\end{equation}
and
\[
	-p_{i+1}c<-K+C(p_{i+1}-p_i)<0<K+C(p_{i+1}-p_i)<p_{i+1}c.
\]
We set $\delta=C(p_{i+1}-p_i)$.
Now we have the following commutative diagram:
\[
	\xymatrix{
	\mathrm{HFN}^{[-K,K)}(H^{\natural p_i};\alpha^{p_i}) \ar[d]_{\sigma_{H^{\natural p_{i+1}}H^{\natural p_i}}} \ar[rrd]^{\cong} & & \\
	\mathrm{HFN}^{[-K+\delta,K+\delta)}(H^{\natural p_{i+1}};\alpha^{p_i}) \ar[rr]_{\sigma_{H^{\natural p_i}H^{\natural p_{i+1}}}} & & \mathrm{HFN}^{[-K+2\delta,K+2\delta)}(H^{\natural p_i};\alpha^{p_i})
	}
\]
Here the map $\sigma_{H^{\natural p_{i+1}}H^{\natural p_i}}$ (resp.\ $\sigma_{H^{\natural p_i}H^{\natural p_{i+1}}}$) is induced by
a linear homotopy from $H^{\natural p_i}$ to $H^{\natural p_{i+1}}$ (resp.\ from $H^{\natural p_{i+1}}$ to $H^{\natural p_i}$),
and the diagonal map is an isomorphism induced by the natural quotient-inclusion map (see \eqref{eq:inequality}).
Combining with $\mathrm{HFN}^{[-K,K)}(H^{\natural p_i};\alpha^{p_i})\neq 0$, we conclude that
\[
	\mathrm{HFN}^{[-K+\delta,K+\delta)}(H^{\natural p_{i+1}};\alpha^{p_i})\neq 0.
\]
Thus $H$ has a $p_{i+1}$-periodic orbit $y$ in the homotopy class $\alpha^{p_i}$, and hence in the homology class $p_i[\alpha]$.

Now it is enough to show that $y$ is simple.
Arguing by contradiction, we assume that $y$ is not simple.
Since $p_{i+1}$ is prime, $y$ is the $p_{i+1}$-th iteration of a one-periodic orbit in the homology class $p_i[\alpha]/p_{i+1}\in H_1(M;\mathbb{Z})/\mathrm{Tor}$.
Since $p_i/p_{i+1}$ is not an integer, this contradicts the fact that $[\alpha]\neq 0\in H_1(M;\mathbb{Z})/\mathrm{Tor}$.
\end{proof}

\begin{proof}[Proof: the $\mathrm{R}$-group case]
The proof is almost same as in the virtually abelian case
except at the following place.

Assume that all $p$-periodic orbits in $\alpha^p$
are the $p$-th iterations of one-periodic orbits in $\alpha$ for a large prime $p$.
Since $\pi_1(M)$ is an $\mathrm{R}$-group and $\omega$ is aspherical,
Lemma \ref{lemma:keynilp} shows that
\begin{equation}\label{eq:spechomog}
	\mathrm{Spec}(H^{\natural p};\alpha^p)=p\mathrm{Spec}(H;\alpha).
\end{equation}
Since $\mathcal{P}_1(H;\alpha)$ is finite and $\omega$ is $\alpha$-toroidally rational,
we can choose $c>0$ so small that
\[
	[-c,c)\cap\mathrm{Spec}(H;\alpha)=\{0\}.
\]
By \eqref{eq:spechomog}, we obtain
\[
	[-pc,pc)\cap\mathrm{Spec}(H^{\natural p};\alpha^p)=\{0\}.
\]
Then the rest of the proof follows the same path as in the virtually abelian case.
\end{proof}


\section{Proof of Theorems \ref{theorem:main2} and \ref{theorem:main2nilp}}

In this section, we show Theorems \ref{theorem:main2} and \ref{theorem:main2nilp}.
The main tool used here is the augmented action filtration on the Floer--Novikov homology.

\subsection{Augmented action filtered Floer--Novikov homology}\label{subsection:augmentedaction}

This subsection is devoted to introduce the augmented action filtration (see \cite{GG09,GG16}).
Let $(M,\omega)$ be a connected closed monotone or negative monotone symplectic manifold of dimension $2n$ with monotonicity constant $\lambda$.
Let $H\colon S^1\times M\to\mathbb{R}$ be a Hamiltonian.
For a free homotopy class $\alpha\in [S^1,M]$,
we fix a reference loop $z_{\alpha}\in\alpha$ and choose a trivialization of $TM$ along $z_{\alpha}$.

\subsubsection{Augmented action}

We define the \textit{augmented action} of a capped one-periodic orbit $\bar{x}\in\overline{\mathcal{P}}_1(H;\alpha)$ to be
\[
	\widetilde{\mathcal{A}}_H(\bar{x})=\mathcal{A}_H(\bar{x})-\frac{\lambda}{2}\Delta_H(\bar{x}).
\]
This is introduced by \cite{GG09} for contractible orbits,
and by \cite{GG16} for non-contractible ones.

We assume that all iterated homotopy classes $\alpha^k$, $k\in\mathbb{N}$, are distinct and non-trivial.
We choose the iterated loop $z_{\alpha}^k$ with the iterated trivialization as the reference loop for $\alpha^k$.
As the usual action functional, the augmented action $\widetilde{\mathcal{A}}_H$ is also homogeneous with respect to iterations.
Namely,
\[
	\widetilde{\mathcal{A}}_{H^{\natural k}}(\bar{x}^k)=k\widetilde{\mathcal{A}}_H(\bar{x}).
\]
Moreover, for any $\bar{x}\in\overline{\mathcal{P}}_1(H;\alpha)$ and $[v]\in\pi_1(\mathcal{L}_{\alpha}M,z_{\alpha})$,
we have
\[
	\widetilde{\mathcal{A}}_H(\bar{x}\#[v])=\widetilde{\mathcal{A}}_H(\bar{x})-\langle \overline{[\omega]}-\lambda\overline{c_1},[v]\rangle.
\]
The \textit{augmented action spectrum} $\widetilde{\mathrm{Spec}}(H;\alpha)$ is defined to be
the set of values of the augmented action of capped one-periodic orbits in $\alpha$, i.e.,
\[
	\widetilde{\mathrm{Spec}}(H;\alpha)=\widetilde{\mathcal{A}}_H\bigl(\overline{\mathcal{P}}_1(H;\alpha)\bigr).
\]

Now we assume that $\omega$ is $\alpha$-toroidally rational, i.e.,
$\langle\overline{[\omega]},\pi_1(\mathcal{L}_{\alpha}M,z_{\alpha})\rangle=h_{\alpha}\mathbb{Z}$
for some non-negative real number $h_{\alpha}$.
Since $(M,\omega)$ is monotone or negative monotone, we have
\[
	\langle [\omega],\pi_2(M)\rangle=\lambda\langle c_1,\pi_2(M)\rangle=\lambda c_1^{\mathrm{min}}\mathbb{Z}.
\]
Hence $h_{\alpha}$ divides $\lambda c_1^{\mathrm{min}}$.
We put $\nu=\lambda c_1^{\mathrm{min}}/h_{\alpha}\in\mathbb{N}$ and $\xi=c_1^{\mathrm{min}}/c_{1,\alpha}^{\mathrm{min}}\in\mathbb{N}$.
We fix $[v_0]\in\Ker\overline{c_1}$ and $[w_0]\in\Ker\overline{[\omega]}$
and choose $n_v,n_w\in\mathbb{Z}$ such that $\langle\overline{[\omega]},[v_0]\rangle=h_{\alpha}n_v$
and $\langle\overline{c_1},[w_0]\rangle=c_{1,\alpha}^{\mathrm{min}}n_w$.
Then we obtain
\[
	\langle\overline{[\omega]}-\lambda\overline{c_1},[v_0]^{n_w\nu}\# [w_0]^{n_v\xi}\rangle%
	=n_w\nu h_{\alpha}n_v-\lambda n_v\xi c_{1,\alpha}^{\mathrm{min}}n_w%
	=0.
\]
We set an equivalence relation $\sim$ on $\overline{\mathcal{P}}_1(H;\alpha)$ by defining $[x_1,\Pi_1]\sim [x_2,\Pi_2]$ if and only if
$x_1=x_2$ and $[\Pi_1\#(-\Pi_2)]\in\{\,([v_0]^{n_w\nu}\# [w_0]^{n_v\xi})^k\mid k\in\mathbb{Z}_{\geq 0}\,\}$.
We denote by $\overline{\mathcal{P}}_1'(H;\alpha)$ the set of such equivalence classes $\bar{x}'=[x,\Pi]'$.
For $[v_0^k]\in\Ker\overline{c_1}$ and $[w_0^k]\in\Ker\overline{[\omega]}$, $k\in\mathbb{N}$, we define the set $\overline{\mathcal{P}}_1'(H;\alpha^k)$ in the same manner.

Let $I=[a,b)$ be an interval with $a,b\in\mathbb{R}\setminus\widetilde{\mathrm{Spec}}(H;\alpha)$.
We suppose that $H$ is $\alpha$-regular (i.e., all one-periodic orbits of $H$ representing $\alpha$ are non-degenerate).
Since $[\omega]-\lambda c_1$ is also $\alpha$-toroidally rational,
the number of $\bar{x}'\in\overline{\mathcal{P}}_1'(H;\alpha)$ with augmented action in $I$ is finite.
We define $\chi(H,I;\alpha)$ to be the sum of the Poincar\'e--Hopf indices of their Poincar\'e return maps.
Namely,
\[
	\chi(H,I;\alpha)=\sum_{\bar{x}'\in\overline{\mathcal{P}}_1'(H;\alpha),\: \widetilde{\mathcal{A}}_H(\bar{x}')\in I}%
	\sgn\det\bigl((d\varphi_H)_{x(0)}-\mathrm{id}\bigr).
\]
Since $\widetilde{\mathrm{Spec}}(K;\alpha)$ depends continuously on the Hamiltonian $K$
in the sense that for any open subsets $U,V\subset\mathbb{R}$ satisfying $V\subset U$
and for any Hamiltonian $H$ sufficiently $C^1$-close to $K$ we have $\widetilde{\mathrm{Spec}}(H;\alpha)\cap U\subset V$,
and $\chi(H,I;\alpha)$ takes values in $\mathbb{Z}$,
this definition can be extended to all Hamiltonians $K$ satisfying
$a,b\in\mathbb{R}\setminus\widetilde{\mathrm{Spec}}(K;\alpha)$.


\subsubsection{Augmented action filtration}

Here we give necessary changes in the argument of \cite[Subsection 3.3]{GG16} to be applicable to our case.
We define the \textit{augmented action gap} by
\[
	\gap(H;\alpha)=\inf\left\{\,\lvert s-s'\rvert\in [0,\infty]\relmiddle|\,s\neq s'\in\widetilde{\mathrm{Spec}}(H;\alpha)\right\}.
\]
We use the convention that $\inf\emptyset=\infty$.
We set
\[
	c_0(M)=\lvert\lambda\rvert\frac{2n\pm 1}{2},
\]
where $\pm$ is the sign of $\lambda$.
We say that the \textit{gap condition} is satisfied if
\[
	\gap(H;\alpha)>c_0(M).
\]
\begin{proposition}[{\cite[Proposition 3.1]{GG16}}]\label{proposition:augmented}
Assume that $H$ is $\alpha$-regular and
the gap condition is satisfied.
Then the complex $\mathrm{CFN}(H;\alpha)$, and hence $\mathrm{HFN}(H;\alpha)$, is filtered
by the augmented action.
In other words,
\[
	\widetilde{\mathcal{A}}_H(\bar{y}')\leq\widetilde{\mathcal{A}}_H(\bar{x}')
\]
whenever $\bar{y}'$ appears in $\partial\bar{x}'$ with non-zero coefficient.
\end{proposition}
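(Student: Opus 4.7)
The plan is to turn ``$\bar{y}'$ appears in $\partial\bar{x}'$ with nonzero coefficient'' into a quantitative bound on augmented actions and then to exclude the possibility $\widetilde{\mathcal{A}}_H(\bar{y}')>\widetilde{\mathcal{A}}_H(\bar{x}')$ using the gap hypothesis. First, I would lift to representatives $\bar{x}=[x,\Pi^+]$ and $\bar{y}=[y,\Pi^-]$ in $\overline{\mathcal{P}}_1(H;\alpha)$ joined by a finite-energy Floer cylinder $u\in\mathcal{M}^1(\bar{x},\bar{y};H,J)$ with compatible cappings $[y,\Pi^+\#u]=[y,\Pi^-]$. By construction of the differential, $\mu_{\mathrm{CZ}}(H,\bar{x})-\mu_{\mathrm{CZ}}(H,\bar{y})=1$, and Lemma~\ref{lemma:energy}(ii) yields $\mathcal{A}_H(\bar{x})-\mathcal{A}_H(\bar{y})=E(u)\geq 0$.

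Subtracting augmented actions gives
\[
\widetilde{\mathcal{A}}_H(\bar{x})-\widetilde{\mathcal{A}}_H(\bar{y})=E(u)-\frac{\lambda}{2}\bigl(\Delta_H(\bar{x})-\Delta_H(\bar{y})\bigr).
\]
The mean-index inequality $\lvert\Delta_H-\mu_{\mathrm{CZ}}\rvert\leq n$ at each endpoint forces $\Delta_H(\bar{x})-\Delta_H(\bar{y})\in[1-2n,\,1+2n]$. A short case split by the sign of $\lambda$ then shows that $-\tfrac{\lambda}{2}\bigl(\Delta_H(\bar{x})-\Delta_H(\bar{y})\bigr)\geq -|\lambda|(2n\pm 1)/2=-c_0(M)$, where the sign matches that of $\lambda$: for $\lambda\geq 0$ the extremal case is $\Delta_H(\bar{x})-\Delta_H(\bar{y})=1+2n$, while for $\lambda<0$ it is $1-2n$. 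Combining with $E(u)\geq 0$ produces
\[
\widetilde{\mathcal{A}}_H(\bar{y})-\widetilde{\mathcal{A}}_H(\bar{x})\leq c_0(M).
\]

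The identity $\langle\overline{[\omega]}-\lambda\overline{c_1},[v_0]^{n_w\nu}\#[w_0]^{n_v\xi}\rangle=0$ used to define $\overline{\mathcal{P}}_1'(H;\alpha)$ ensures the augmented action descends to equivalence classes, so the preceding bound passes to $\bar{x}'$ and $\bar{y}'$. If $\widetilde{\mathcal{A}}_H(\bar{x}')\neq\widetilde{\mathcal{A}}_H(\bar{y}')$, then these would be distinct elements of $\widetilde{\mathrm{Spec}}(H;\alpha)$ and the gap hypothesis $\gap(H;\alpha)>c_0(M)$ would force $\lvert\widetilde{\mathcal{A}}_H(\bar{y}')-\widetilde{\mathcal{A}}_H(\bar{x}')\rvert>c_0(M)$; combined with the displayed bound this rules out $\widetilde{\mathcal{A}}_H(\bar{y}')>\widetilde{\mathcal{A}}_H(\bar{x}')$ and delivers $\widetilde{\mathcal{A}}_H(\bar{y}')\leq\widetilde{\mathcal{A}}_H(\bar{x}')$. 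The corresponding filtration on $\mathrm{HFN}(H;\alpha)$ is then inherited from the chain level. I expect the main obstacle to be the careful verification of capping compatibility: the \emph{relative} index $1$ supplied by $\mathcal{M}^1(\bar{x},\bar{y};H,J)$ must agree with the \emph{absolute} index difference of the chosen cappings, so that the bound $\lvert\Delta_H(\bar{x})-\Delta_H(\bar{y})-1\rvert\leq 2n$ applies to this specific pair and is not shifted by a recapping contribution $2\langle\overline{c_1},[v]\rangle$.
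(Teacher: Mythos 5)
Your argument is correct and is essentially the one the paper itself uses: the paper states this as a citation of \cite[Proposition 3.1]{GG16a}, but reproduces exactly your chain of estimates (energy positivity, the index-$1$ relation for compatible cappings, the bound $\lvert\Delta_H-\mu_{\mathrm{CZ}}\rvert\leq n$, and the resulting bound by $c_0(M)$ ruled out by the gap condition) inline in the proof of Theorem \ref{theorem:main2}. Your observations that the augmented action descends to $\overline{\mathcal{P}}_1'(H;\alpha)$ and that Lemma \ref{lemma:energy}(i) supplies the capping compatibility needed to pass from the relative to the absolute index difference correctly dispose of the only delicate points.
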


Let $a$ and $b$ be real numbers such that $-\infty\leq a<b\leq\infty$ and $a,b\not\in\widetilde{\mathrm{Spec}}(H;\alpha)$.
We assume that $H$ is $\alpha$-regular
and the gap condition is satisfied.
We set $\widetilde{\mathcal{P}}_1^a=\{\,\bar{x}'\in\overline{\mathcal{P}}_1'(H;\alpha)\mid\widetilde{\mathcal{A}}_H(\bar{x}')<a\,\}$.
We define the augmented action filtered chain group by
\[
	\widetilde{\mathrm{CFN}}^{[a,b)}(H;\alpha)=\widetilde{\mathrm{CFN}}^b(H;\alpha)/\widetilde{\mathrm{CFN}}^a(H;\alpha),
\]
where
\[
	\widetilde{\mathrm{CFN}}^a(H;\alpha)=\left\{\,\xi=\sum\xi_{\bar{x}'}\bar{x}'\relmiddle|%
	\begin{aligned}%
		\bar{x}'\in\widetilde{\mathcal{P}}_1^a,\, \xi_{\bar{x}'}\in\mathbb{Z}/2\mathbb{Z}\ \text{such that $\forall C\in\mathbb{R}$,}\\%
		\#\{\,\bar{x}'\mid\xi_{\bar{x}'}\neq 0,\, \mathcal{A}_H(\bar{x}')>C\,\}<\infty%
	\end{aligned}\,\right\}.
\]
Proposition \ref{proposition:augmented} shows that $\widetilde{\mathrm{CFN}}^a(H;\alpha)$ is invariant under the boundary operator $\partial_b^{H,J}$.
Thus we get an induced operator $\partial_{[a,b)}^{H,J}$ on the quotient $\widetilde{\mathrm{CFN}}^{[a,b)}(H;\alpha)$.
Then the \textit{augmented action filtered Floer--Novikov homology group} is defined to be
\[
	\widetilde{\mathrm{HFN}}^{[a,b)}(H;\alpha)=\Ker{\partial_{[a,b)}^{H,J}}/\Image{\partial_{[a,b)}^{H,J}}.
\]

The following proposition enables us
to define the augmented action filtered Floer--Novikov homology $\widetilde{\mathrm{HFN}}^{[a,b)}(H;\alpha)$
even if $H$ is not $\alpha$-regular.

\begin{proposition}[{\cite[Proposition 3.3]{GG16}}]\label{proposition:augmented2}
Let $H\colon S^1\times M\to\mathbb{R}$ be a Hamiltonian
such that the gap condition is satisfied and let $a\not\in\widetilde{\mathrm{Spec}}(H;\alpha)$.
Then for any $\alpha$-regular Hamiltonian $K$ sufficiently $C^1$-close to $H$, the subspace
$\widetilde{\mathrm{CFN}}^a(K;\alpha)\subset\mathrm{CFN}(K;\alpha)$ is a subcomplex.
\end{proposition}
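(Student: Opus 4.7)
The plan is to adapt the argument of \cite[Proposition 3.3]{GG16a} to our $\alpha$-toroidally rational setting. The augmented action $\widetilde{\mathcal{A}}_H$ descends to $\overline{\mathcal{P}}_1'(H;\alpha)$ by construction of the equivalence relation $\sim$ (which kills the cohomology class $\overline{[\omega]}-\lambda\overline{c_1}$), so one can carry over GG16a's strategy essentially verbatim; the $\alpha$-toroidally rational assumption guarantees that $\widetilde{\mathrm{Spec}}(H;\alpha)$ is discrete in any bounded window, which is all that will be used.

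First I would fix scales. Since $a\notin\widetilde{\mathrm{Spec}}(H;\alpha)$ and the gap condition holds, pick $\varepsilon>0$ with
\[
	4\varepsilon<\gap(H;\alpha)-c_0(M)
	\quad\text{and}\quad
	(a-\varepsilon,a+\varepsilon)\cap\widetilde{\mathrm{Spec}}(H;\alpha)=\emptyset.
\]
Next I would invoke the standard continuity of one-periodic trajectories under $C^1$-small perturbations: for $K$ sufficiently $C^1$-close to $H$ and $\alpha$-regular, every one-periodic trajectory of $K$ representing $\alpha$ whose augmented action lies in a fixed bounded window is close to a one-periodic trajectory of $H$ with the same capping class, and both the action and the mean index are close (the latter via continuity of the linearized return map on the trivialized tangent bundle). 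Consequently $\widetilde{\mathrm{Spec}}(K;\alpha)$ in the window lies in the $\varepsilon/2$-neighborhood of $\widetilde{\mathrm{Spec}}(H;\alpha)$; in particular $(a-\varepsilon/2,a+\varepsilon/2)\cap\widetilde{\mathrm{Spec}}(K;\alpha)=\emptyset$, so $a\notin\widetilde{\mathrm{Spec}}(K;\alpha)$ and the filtration at level $a$ is well-defined for $K$.

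The core calculation is then the Floer trajectory estimate for an index-one differential from $\bar x'$ to $\bar y'$: the energy identity yields $\mathcal{A}_K(\bar y')\leq\mathcal{A}_K(\bar x')$, while $\mu_{\mathrm{CZ}}(K,\bar x')-\mu_{\mathrm{CZ}}(K,\bar y')=1$ together with $|\Delta_K-\mu_{\mathrm{CZ}}|\leq n$ gives
\[
	\widetilde{\mathcal{A}}_K(\bar y')-\widetilde{\mathcal{A}}_K(\bar x')
	\leq\frac{|\lambda|(2n+1)}{2}=c_0(M).
\]
Suppose $\widetilde{\mathcal{A}}_K(\bar x')<a$. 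By the continuity step, there exist $s_x,s_y\in\widetilde{\mathrm{Spec}}(H;\alpha)$ with $|s_x-\widetilde{\mathcal{A}}_K(\bar x')|<\varepsilon/2$ and $|s_y-\widetilde{\mathcal{A}}_K(\bar y')|<\varepsilon/2$. Then $s_x<a+\varepsilon/2$, forcing $s_x\leq s_-$, where $s_-=\max\{s\in\widetilde{\mathrm{Spec}}(H;\alpha):s<a\}$, and $s_y\leq s_x+c_0(M)+\varepsilon\leq s_-+c_0(M)+\varepsilon<s_-+\gap(H;\alpha)$. The gap condition forces $s_y\leq s_-$, whence $\widetilde{\mathcal{A}}_K(\bar y')<s_-+\varepsilon/2<a$. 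This proves that $\partial_b^{K,J}$ preserves $\widetilde{\mathrm{CFN}}^a(K;\alpha)$.

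The main obstacle is the continuity step in the presence of a Novikov-type chain group: each geometric orbit has infinitely many lifts in $\overline{\mathcal{P}}_1'(K;\alpha)$, so the cluster-near-$H$-orbit statement must be phrased window-by-window. This is where $\alpha$-toroidal rationality enters decisively: the subgroup $\langle\overline{[\omega]}-\lambda\overline{c_1},\pi_1(\mathcal{L}_\alpha M,z_\alpha)\rangle$ is discrete in $\mathbb{R}$, so the lifts in $\overline{\mathcal{P}}_1'$ with augmented action in any compact interval of $\mathbb{R}$ form a finite set, and the perturbation/compactness argument then proceeds exactly as in \cite{GG16a}. Apart from this bookkeeping the rest is formal, so I would set up the proof to reduce entirely to the continuity claim and the Floer index estimate, and then invoke the gap condition to close the argument.
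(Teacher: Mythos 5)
The paper does not actually prove this proposition---it is imported from \cite[Proposition 3.3]{GG16a} without proof---and your reconstruction follows exactly the route of that cited argument: continuity of the augmented action spectrum under $C^1$-small perturbation (needed only on the bounded window $[a-c_0(M)-\varepsilon,a+c_0(M)+\varepsilon]$, as you note), the estimate that the Floer differential raises the augmented action by at most $c_0(M)$, and the gap condition to rule out crossing the level $a$; the argument is correct. The only imprecision is that $c_0(M)=\lvert\lambda\rvert(2n-1)/2$ in the negative monotone case, so your displayed identity $\lvert\lambda\rvert(2n+1)/2=c_0(M)$ holds only for $\lambda>0$, although the case-by-case index estimate still yields the required bound $\widetilde{\mathcal{A}}_K(\bar y')-\widetilde{\mathcal{A}}_K(\bar x')\leq c_0(M)$ for either sign of $\lambda$.
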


We define the set
\[
	\widetilde{\mathcal{H}}^{a,b}(M;\alpha)=\{\,H\colon S^1\times M\to\mathbb{R}\mid a,b\not\in\widetilde{\mathrm{Spec}}(H;\alpha)\,\}.
\]

\begin{definition}
For $H\in\widetilde{\mathcal{H}}^{a,b}(M;\alpha)$,
we define $\widetilde{\mathrm{HFN}}^{[a,b)}(H;\alpha)=\widetilde{\mathrm{HFN}}^{[a,b)}(K;\alpha)$,
where $K$ is any $\alpha$-regular Hamiltonian sufficiently $C^1$-close to $H$.
\end{definition}

A standard argument similar to Subsection \ref{subsection:floernovikov} shows that
this definition does not depend on the choice of $K$.

\begin{remark}\label{remark:chinonzero}
Let $I=[a,b)$ be an interval with $a,b\in\mathbb{R}\setminus\widetilde{\mathrm{Spec}}(H;\alpha)$.
We suppose that $\omega$ is $\alpha$-toroidally rational.
Then a straightforward computation shows that
\begin{align*}
	\chi(H,I;\alpha)%
	&=\sum_{\bar{x}'\in\overline{\mathcal{P}}_1'(H;\alpha),\ \widetilde{\mathcal{A}}_H(\bar{x}')\in I}%
	\sgn\det\bigl((d\varphi_H)_{x(0)}-\mathrm{id}\bigr)\\
	&=\sum_{\bar{x}'\in\overline{\mathcal{P}}_1'(H;\alpha),\ \widetilde{\mathcal{A}}_H(\bar{x}')\in I}%
	(-1)^{\mu_{\mathrm{CZ}}(H,\bar{x}')-n}\\
	&=(-1)^n\left\{\dim_{\mathbb{Z}/2\mathbb{Z}}\widetilde{\mathrm{CFN}}_{\mathrm{even}}^I(H;\alpha)%
	-\dim_{\mathbb{Z}/2\mathbb{Z}}\widetilde{\mathrm{CFN}}_{\mathrm{odd}}^I(H;\alpha)\right\}\\
	&=(-1)^n\left\{\dim_{\mathbb{Z}/2\mathbb{Z}}\widetilde{\mathrm{HFN}}_{\mathrm{even}}^I(H;\alpha)%
	-\dim_{\mathbb{Z}/2\mathbb{Z}}\widetilde{\mathrm{HFN}}_{\mathrm{odd}}^I(H;\alpha)\right\}.
\end{align*}
In particular, we have $\widetilde{\mathrm{HFN}}^I(H;\alpha)\neq 0$ if $\chi(H,I;\alpha)\neq 0$.
Here we note that if one of the conditions that $a\neq-\infty$, $b\neq\infty$ and $\omega$ is $\alpha$-toroidally rational is dropped,
then the $\mathbb{Z}/2\mathbb{Z}$-vector space $\widetilde{\mathrm{CFN}}^I(H;\alpha)$ might be infinite-dimensional.
\end{remark}


\subsubsection{Continuation}

Let $H^-, H^+\colon S^1\times M\to\mathbb{R}$ be two Hamiltonians.
We consider a linear homotopy $\{H_s\}_{s\in\mathbb{R}}$ from $H^-$ to $H^+$ (see Subsection \ref{subsection:floernovikov}).
We set
\[
	c_a(H_s)=\int_{S^1}\max_M(H^+-H^-)\,dt
\]
and
\[
	c_h(H_s)=\max\{0,c_a(H_s)\}+\lvert\lambda\rvert n\geq 0.
\]

\begin{proposition}[{\cite[Proposition 3.5]{GG16}}]\label{proposition:augmented3}
Assume that both $H^-$ and $H^+$ satisfy the gap condition, i.e.,
\[
	\gap(H^-;\alpha)>c_0(M)\quad \text{and}\quad \gap(H^+;\alpha)>c_0(M),
\]
and $a,b\in\mathbb{R}\cup\{\infty\}$ satisfy
$a<b$ and $a,b\not\in\widetilde{\mathrm{Spec}}(H^{\pm};\alpha)$.
Then a homotopy $\{H_s\}_s$ from $H^-$ to $H^+$ induces a map in the Floer--Novikov homology shifting the action
filtration upward by $c_h(H_s)$$:$
\[
	\sigma_{H^+H^-}\colon\widetilde{\mathrm{HFN}}^{[a,b)}(H^-;\alpha)\to\widetilde{\mathrm{HFN}}^{[a,b)+c_h(H_s)}(H^+;\alpha),
\]
where $[a,b)+c_h(H_s)=[a+c_h(H_s),b+c_h(H_s))$.
\end{proposition}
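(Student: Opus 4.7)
The plan is to derive the filtration shift from two independent bounds on any Floer continuation trajectory counted at the chain level: one for the usual action and one for the mean-index term in $\widetilde{\mathcal{A}}_H = \mathcal{A}_H - (\lambda/2)\Delta_H$. I would first reduce to the case where $H^{\pm}$ are $\alpha$-regular by $C^1$-approximation via Definition \ref{definition:degenerate}, using that the gap condition is $C^1$-open so that Proposition \ref{proposition:augmented} applies to nearby regular Hamiltonians. The chain-level continuation map $\mathrm{CFN}(H^-;\alpha) \to \mathrm{CFN}(H^+;\alpha)$ counts finite-energy, Fredholm-index-zero solutions $u$ of the $s$-dependent Floer equation; the capping $\Pi^+$ of $x^+$ is defined by gluing $u$ to $\Pi^-$, so that $\mu_{\mathrm{CZ}}(H^+,\bar{x}^+) = \mu_{\mathrm{CZ}}(H^-,\bar{x}^-)$.

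For the first bound, the energy identity together with the linear form $\partial_s H_s = \beta'(s)(H^+ - H^-)$ and $\int_{\mathbb{R}} \beta'\, ds = 1$ yields
\[
\mathcal{A}_{H^+}(\bar{x}^+) - \mathcal{A}_{H^-}(\bar{x}^-) \leq \int_0^1\!\int_{-\infty}^{\infty} \beta'(s)\bigl(H^+_t - H^-_t\bigr)\bigl(u(s,t)\bigr)\, ds\, dt \leq c_a(H_s),
\]
which is at most $\max\{0, c_a(H_s)\}$. For the second bound, the inequality $|\Delta_H - \mu_{\mathrm{CZ}}| \leq n$ recalled in Subsection \ref{subsection:floernovikov}, combined with the equality of Conley--Zehnder indices at the endpoints, gives $|\Delta_{H^+}(\bar{x}^+) - \Delta_{H^-}(\bar{x}^-)| \leq 2n$. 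Subtracting the $(\lambda/2)$-scaled mean-index estimate,
\[
\widetilde{\mathcal{A}}_{H^+}(\bar{x}^+) - \widetilde{\mathcal{A}}_{H^-}(\bar{x}^-) \leq \max\{0, c_a(H_s)\} + |\lambda|n = c_h(H_s).
\]
Hence the chain-level continuation map sends $\widetilde{\mathrm{CFN}}^b(H^-;\alpha)$ into $\widetilde{\mathrm{CFN}}^{b+c_h(H_s)}(H^+;\alpha)$ and preserves the subcomplexes at filtration level $a$, so it descends to $\widetilde{\mathrm{CFN}}^{[a,b)}$ and induces $\sigma_{H^+H^-}$ on $\widetilde{\mathrm{HFN}}$ with the claimed shift.

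The main obstacle is that intermediate Hamiltonians $H_s$ along the homotopy need not satisfy the gap condition, so the augmented action does not filter the continuation cylinder itself. This is bypassed because the estimate above depends only on the asymptotic data $\bar{x}^{\pm}$; the gap condition at $H^{\pm}$ alone suffices to make domain and codomain well-defined. A secondary subtlety is that each orbit has infinitely many capping lifts, so one must check that only finitely many have augmented action in any compact window---this follows from $\alpha$-toroidal rationality of $\omega$ (and hence of $[\omega] - \lambda c_1$) combined with the gap condition, which together render $\widetilde{\mathrm{Spec}}(H^{\pm};\alpha)$ discrete in a neighborhood of the chosen levels.
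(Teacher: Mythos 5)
The paper does not prove this proposition; it is quoted verbatim from \cite[Proposition 3.5]{GG16a}, and your argument is essentially the standard one from that source: bound the ordinary action shift by the energy identity for a linear homotopy, bound the mean-index shift by $2n$ using that index-zero continuation solutions equate the Conley--Zehnder indices of the capped endpoints together with $\lvert\Delta-\mu_{\mathrm{CZ}}\rvert\leq n$, and add the two contributions to get $c_h(H_s)=\max\{0,c_a(H_s)\}+\lvert\lambda\rvert n$. Your identification of the main subtlety --- that intermediate Hamiltonians need not satisfy the gap condition, and that the estimate only uses asymptotic data --- is exactly the right point.

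One step is misstated: the gap condition is \emph{not} $C^1$-open, since a degenerate orbit of $H^{\pm}$ can split under perturbation into several nondegenerate orbits with distinct, arbitrarily close augmented actions, destroying the gap. The reduction to $\alpha$-regular Hamiltonians should instead invoke Proposition \ref{proposition:augmented2} (quoted in the paper from \cite[Proposition 3.3]{GG16a}), which asserts directly that $\widetilde{\mathrm{CFN}}^a(K;\alpha)$ is a subcomplex for any $\alpha$-regular $K$ sufficiently $C^1$-close to $H^{\pm}$, precisely because such $K$ need not itself satisfy the gap condition; together with Definition \ref{definition:degenerate} this makes the domain and codomain well-defined, after which your chain-level estimate goes through unchanged.
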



\subsection{Proof of Theorem \ref{theorem:main2}}

As in Theorem \ref{theorem:main3}, we choose an abelian subgroup $A<\pi_1(M)$ of finite index,
$\gamma_{\alpha}\in\pi_1(M)$ representing $\alpha$,
a positive integer $\ell_{\alpha}\in\{1,\ldots,(\pi_1(M):A)\}$ such that $\gamma_{\alpha}^{\ell_{\alpha}}\in A$
and a positive integer $q_{\alpha}$ coprime to $\ell_{\alpha}$.
The proof is inspired by the argument by Ginzburg and G\"urel \cite{GG16}.

\begin{proof}
Since $\mathcal{P}_1(H;[\alpha])$ is finite, there exist finitely many distinct homotopy classes $\alpha_j\in [S^1,M]$
representing $[\alpha]\in H_1(M;\mathbb{Z})/\mathrm{Tor}$ such that every $x\in\mathcal{P}_1(H;[\alpha])$ is contained in one of $\alpha_j$'s.
As in Theorem \ref{theorem:main3}, one can show that for every sufficiently large prime $p$, the classes $\alpha_j^p$ are all distinct.
Fix a reference loop $z_{\alpha}\in\alpha$ and a trivialization of $TM|_{z_{\alpha}}$.
Choose the iterated loop $z_{\alpha}^p$ with the iterated trivialization as the reference loop for $\alpha^p$.

From now on, we only consider primes in $P_{q_{\alpha},\ell_{\alpha}}$ (see \eqref{eq:pl} in Section \ref{section2} for the definition).
Let $p_i\in P_{q_{\alpha},\ell_{\alpha}}$ be a sufficiently large prime satisfying the above condition.
Assume that $H$ has no simple $p_i$-periodic orbit in $\alpha^{p_i}$.
Since $p_i$ is prime, all $p_i$-periodic orbits in $\alpha^{p_i}$ are the $p_i$-th iterations of one-periodic orbits in $\alpha$.
Hence there is an action-preserving and mean index-preserving one-to-one correspondence between
$\mathcal{P}_1(H^{\natural p_i};\alpha^{p_i})$
and the set of $p_i$-th iterations $\left\{\,y^{p_i}\relmiddle| y\in\mathcal{P}_1(H;\alpha)\,\right\}$.

Put
\[
	\mathcal{S}=\left\{\,\widetilde{\mathcal{A}}_H(\bar{x}')-\frac{1}{m}\langle\overline{[\omega]}-\lambda\overline{c_1},[w]\rangle\relmiddle|%
	\begin{aligned}%
		\bar{x}'\in\overline{\mathcal{P}}_1'(H;\alpha),\ [w]\in\pi_1(\mathcal{L}_{\alpha}M),\\%
		m=1,\ldots,(\pi_1(M):A)%
	\end{aligned}\,\right\}.
\]
Since $\chi(H,I;\alpha)\neq0$ for every sufficiently small interval $I$ centered at some $s\in\widetilde{\mathrm{Spec}}(H;\alpha)$,
we can assume that $\chi(H,[-c,c);\alpha)\neq0$ for every sufficiently small $c>0$ by adding a constant to the Hamiltonian $H$.
Moreover, since $\mathcal{P}_1(H;[\alpha])$ is finite and $\omega$ is $\alpha$-toroidally rational
(and so is $[\omega]-\lambda c_1$),
we can choose $c>0$ so small that
\[
	[-c,c]\cap\mathcal{S}=\{0\}.
\]
In particular, we have $[-c,c)\cap\widetilde{\mathrm{Spec}}(H;\alpha)=\{0\}$.
Since the monotonicity of $(M,\omega)$ implies the asphericity of $[\omega]-\lambda c_1$
and $\pi_1(M)$ is virtually abelian,
as in the proof of Theorem \ref{theorem:main3}, one can show that
\begin{equation}\label{eq:augspec}
	[-p_ic,p_ic)\cap\widetilde{\mathrm{Spec}}(H^{p_i};\alpha^{p_i})=\{0\}.
\end{equation}
Therefore, there is a one-to-one correspondence between the sets
\[
	\left\{\,\bar{x}'\in\overline{\mathcal{P}}_1'(H;\alpha)\relmiddle|\widetilde{\mathcal{A}}_H(\bar{x}')\in I\,\right\}%
	\ \text{and}\ %
	\left\{\,\bar{x}'\in\overline{\mathcal{P}}_1'(H^{\natural p_i};\alpha^{p_i})\relmiddle|\widetilde{\mathcal{A}}_{H^{\natural p_i}}(\bar{x}')\in p_iI\,\right\},
\]
where $I=[-c,c)$ and $p_iI=[-p_ic,p_ic)$.
Moreover, the Shub--Sullivan theorem \cite{SS,CMPY} shows that the Poincar\'e--Hopf index of $x^{p_i}$ coincides with that of $x$
for sufficiently large admissible (see the proof of Theorem \ref{theorem:main3} or \cite{GG10} for the definition) prime $p_i\in P_{q_{\alpha},\ell_{\alpha}}$.
Therefore, we have
\[
	\chi(H^{\natural p_i},p_iI;\alpha^{p_i})=\chi(H,I;\alpha)
\]
when $p_i\in P_{q_{\alpha},\ell_{\alpha}}$ is large.

Now we claim that we can define the augmented action filtered Floer--Novikov homology $\widetilde{\mathrm{HFN}}^{p_iI}(H^{\natural p_i};\alpha^{p_i})$
as long as $p_i\in P_{q_{\alpha},\ell_{\alpha}}$ is so large that $p_ic>c_0(M)$.
Indeed, let $\bar{x}'$ be a generator of $\mathrm{CFN}(H^{\natural p_i};\alpha^{p_i})$ with $\widetilde{\mathcal{A}}_{H^{\natural p_i}}(\bar{x}')\in p_iI$.
By the choice of $c$, we have $\widetilde{\mathcal{A}}_{H^{\natural p_i}}(\bar{x}')=0$.
Hence it is enough to show that $\widetilde{\mathcal{A}}_{H^{\natural p_i}}(\bar{y}')\leq 0=\widetilde{\mathcal{A}}_{H^{\natural p_i}}(\bar{x}')$
whenever $\bar{y}'$ appears in $\partial\bar{x}'$ with non-zero coefficient.
For simplicity, we assume that $\lambda>0$.
Then
\begin{align*}
	\widetilde{\mathcal{A}}_{H^{\natural p_i}}(\bar{y}')%
	&=\mathcal{A}_{H^{\natural p_i}}(\bar{y}')-\frac{\lambda}{2}\Delta_{H^{\natural p_i}}(\bar{y}')\\
	&\leq\mathcal{A}_{H^{\natural p_i}}(\bar{y}')-\frac{\lambda}{2}(\mu_{\mathrm{CZ}}(H^{\natural p_i},\bar{y}')-n)\\
	&<\mathcal{A}_{H^{\natural p_i}}(\bar{x}')-\frac{\lambda}{2}(\mu_{\mathrm{CZ}}(H^{\natural p_i},\bar{x}')-n-1)\\
	&\leq\mathcal{A}_{H^{\natural p_i}}(\bar{x}')-\frac{\lambda}{2}(\Delta_{H^{\natural p_i}}(\bar{x}')-2n-1)\\
	&=\widetilde{\mathcal{A}}_{H^{\natural p_i}}(\bar{x}')+c_0(M)\\
	&<p_ic.
\end{align*}
By \eqref{eq:augspec}, we have either $\widetilde{\mathcal{A}}_{H^{\natural p_i}}(\bar{y}')<-p_ic<0$ or $\widetilde{\mathcal{A}}_{H^{\natural p_i}}(\bar{y}')=0$.

Since $\chi(H^{\natural p_i},p_iI;\alpha^{p_i})=\chi(H,I;\alpha)\neq 0$, Remark \ref{remark:chinonzero} shows that
\[
	\widetilde{\mathrm{HFN}}^{p_iI}(H^{\natural p_i};\alpha^{p_i})\neq 0.
\]

Let $\{H_s^+\}_s$ (resp.\ $\{H_s^-\}_s$) be a linear homotopy from $H^{\natural p_i}$ to $H^{\natural p_{i+1}}$
(resp.\ from $H^{\natural p_{i+1}}$ to $H^{\natural p_i}$).
We set
\[
	e_+=\max\left\{\int_{S^1}\max_M H_t\,dt,0\right\},\quad e_-=\max\left\{-\int_{S^1}\min_M H_t\,dt,0\right\}
\]
and
\[
	a_{\pm}=(p_{i+1}-p_i)e_{\pm}+\lvert\lambda\rvert n.
\]
Then we have
\begin{equation}\label{eq:ac}
	a_{\pm}\geq c_h(H_s^{\pm}).
\end{equation}

Since $p_{i+1}-p_i=o(p_i)$ as $i\to\infty$ (see, e.g., \cite[Theorem 3 (I)]{BHP}), we have
\[
	\frac{1}{2}p_ic>a_++a_-
\]
for sufficiently large $p_i\in P_{q_{\alpha},\ell_{\alpha}}$.
Let $J=[-c/2,c/2)\subset I$.
Then
\[
	(p_iJ+a_++a_-)\cap\widetilde{\mathrm{Spec}}(H^{p_i};\alpha^{p_i})=\{0\}.
\]
Then $\widetilde{\mathrm{HFN}}^{p_iJ+a_++a_-}(H^{\natural p_i};\alpha^{p_i})$ is also defined when $p_i\in P_{q_{\alpha},\ell_{\alpha}}$ is so large that
$p_ic+a_++a_->c_0(M)$.
Moreover, we have an isomorphism
\[
	\Phi\colon\widetilde{\mathrm{HFN}}^{p_iJ}(H^{\natural p_i};\alpha^{p_i})\to\widetilde{\mathrm{HFN}}^{p_iJ+a_++a_-}(H^{\natural p_i};\alpha^{p_i})
\]
induced by the natural quotient-inclusion map.

Now we are in a position to show that $H$ has a $p_{i+1}$-periodic orbit in the homotopy class $\alpha^{p_i}$.
Arguing by contradiction, we assume that there are no such orbits.
Then we have $\gap(H^{\natural p_{i+1}};\alpha^{p_i})=\infty$.
By Proposition \ref{proposition:augmented}, the filtered Floer--Novikov homology $\widetilde{\mathrm{HFN}}^{p_iJ+a_+}(H^{\natural p_{i+1}};\alpha^{p_i})$ is defined
(of course, this should be zero at the chain level).
Once the filtered Floer--Novikov homology groups are defined,
it is easy to see that the same conclusion as with Proposition \ref{proposition:augmented3} holds.
Hence we have the continuation maps
\[
	\sigma_{H^{\natural p_{i+1}}H^{\natural p_i}}\colon\widetilde{\mathrm{HFN}}^{p_iJ}(H^{\natural p_i};\alpha^{p_i})\to\widetilde{\mathrm{HFN}}^{p_iJ+a_+}(H^{\natural p_{i+1}};\alpha^{p_i})
\]
and
\[
	\sigma_{H^{\natural p_i}H^{\natural p_{i+1}}}\colon\widetilde{\mathrm{HFN}}^{p_iJ+a_+}(H^{\natural p_{i+1}};\alpha^{p_i})\to\widetilde{\mathrm{HFN}}^{p_iJ+a_++a_-}(H^{\natural p_i};\alpha^{p_i})
\]
by \eqref{eq:ac}.
Now we have the following commutative diagram:
\[
	\xymatrix{
	\widetilde{\mathrm{HFN}}^{p_iJ}(H^{\natural p_i};\alpha^{p_i}) \ar[d]_{\sigma_{H^{\natural p_{i+1}}H^{\natural p_i}}} \ar[rrd]^(0.44){\Phi} & & \\
	\widetilde{\mathrm{HFN}}^{p_iJ+a_+}(H^{\natural p_{i+1}};\alpha^{p_i}) \ar[rr]_{\sigma_{H^{\natural p_i}H^{\natural p_{i+1}}}} & & \widetilde{\mathrm{HFN}}^{p_iJ+a_++a_-}(H^{\natural p_i};\alpha^{p_i})
	}
\]
Since $\Phi$ is an isomorphism and $\widetilde{\mathrm{HFN}}^{p_iJ}(H^{\natural p_i};\alpha^{p_i})\neq 0$, we conclude that
\[
	\widetilde{\mathrm{HFN}}^{p_iJ+a_+}(H^{\natural p_{i+1}};\alpha^{p_i})\neq 0.
\]
Thus $H$ has a $p_{i+1}$-periodic orbit $y$ in $\alpha^{p_i}$, and hence in the homology class $p_i[\alpha]$.

Now it is enough to show that $y$ is simple.
Arguing by contradiction, we assume that $y$ is not simple.
Since $p_{i+1}$ is prime, $y$ is the $p_{i+1}$-th iteration of a one-periodic orbit in the homology class $p_i[\alpha]/p_{i+1}\in H_1(M;\mathbb{Z})/\mathrm{Tor}$.
Since $p_i/p_{i+1}$ is not an integer, this contradicts the fact that $[\alpha]\neq 0\in H_1(M;\mathbb{Z})/\mathrm{Tor}$.
\end{proof}


\subsection{Proof of Theorem \ref{theorem:main2nilp}}

\begin{proof}
Since the proof is almost same as in Theorem \ref{theorem:main2},
here we only give the necessary changes.

Assume that all $p$-periodic orbits in $\alpha^p$
are the $p$-th iterations of one-periodic orbits in $\alpha$ for a large prime $p$.
Since $\pi_1(M)$ is an $\mathrm{R}$-group and $[\omega]-\lambda c_1$ is aspherical,
Lemma \ref{lemma:keynilp} shows that
\[
	\widetilde{\mathrm{Spec}}(H^{\natural p};\alpha^p)=p\widetilde{\mathrm{Spec}}(H;\alpha).
\]
Thus we obtain $\gap(H^{\natural p};\alpha^p)=p\gap(H;\alpha)$.
Hence, by Proposition \ref{proposition:augmented},
the augmented action filtered Floer--Novikov homology $\widetilde{\mathrm{HFN}}^{pI}(H^{\natural p};\alpha^p)$
is defined as long as $p$ is so large that $p\gap(H;\alpha)>c_0(M)$.

Moreover, since $\mathcal{P}_1(H;\alpha)$ is finite and $[\omega]-\lambda c_1$ is $\alpha$-toroidally rational,
we can show that
\[
	\chi(H^{\natural p},pI;\alpha^p)=\chi(H,I;\alpha)
\]
when $p$ is large.
Then the rest of the proof follows the same path as in Theorem \ref{theorem:main2}.
\end{proof}


\section*{Acknowledgments}

The author would like to express his sincere gratitude to his advisor Takashi Tsuboi and Urs Frauenfelder for many fruitful discussions.
The author is also grateful to Viktor Ginzburg, Ba\c{s}ak G\"urel, Morimichi Kawasaki, Yoshihiko Mitsumatsu and Shun Wakatsuki for many valuable comments.
A part of this work was carried out while the author was visiting the University of Augsburg.
The author would like to thank the institute for its warm hospitality and support.


\bibliographystyle{amsart}

\begin{thebibliography}{99}
\bibitem[Ba]{Ba} M. Bator\'eo,
	{\it On non-contractible periodic orbits of symplectomorphisms},
	J. Symplectic Geom.\ \textbf{15} (2017), no.~3, 687--717.
\bibitem[BPS]{BPS} P. Biran, L. Polterovich and D. Salamon,
	{\it Propagation in Hamiltonian dynamics and relative symplectic homology},
	Duke Math.\ J. \textbf{119} (2003), no.~1, 65--118.
\bibitem[BHP]{BHP} R. Baker, G. Harman and J. Pintz,
	{\it The exceptional set for Goldbach's problem in short intervals},
	Sieve methods, exponential sums, and their applications in number theory (Cardiff, 1995), 1--54, London Math.\ Soc.\ Lecture Note Ser., \textbf{237} (1997), Cambridge Univ.\ Press, Cambridge.
\bibitem[CMPY]{CMPY} S. Chow, J. Mallet-Paret and J. Yorke,
	{\it A periodic orbit index which is a bifurcation invariant},
	Geometric dynamics (Rio de Janeiro, 1981), 109--131, Lecture Notes in Math., 1007, Springer, Berlin (1983).
\bibitem[Co]{Co} C. Conley,
	{\it Lecture at the University of Wisconsin},
	April 6 (1984).
\bibitem[Di]{Di} P. Dirichlet,
	{\it Beweis des Satzes, dass jede unbegrenzte arithmetische Progression, deren erstes Glied und Differenz ganze Zahlen ohne gemeinschaftlichen Factor sind, unendlich viele Primzahlen enth\"alt},
	Abhandlungen der K\"oniglichen Preu\ss ischen Akademie der Wissenschaften zu Berlin \textbf{48} (1837), 45--71.
\bibitem[Fl]{Fl} A. Floer,
	{\it Symplectic fixed points and holomorphic spheres},
	Comm.\ Math.\ Phys.\ \textbf{120} (1989), 575--611.
\bibitem[FHS]{FHS} A. Floer, H. Hofer and D. Salamon,
	{\it Transversality in elliptic Morse theory for the symplectic action},
	Duke Math.\ J. \textbf{80} (1995), 251--292.
\bibitem[Fr92]{Fr92} J. Franks,
	{\it Geodesics on $S^2$ and periodic points of annulus homeomorphisms},
	Invent.\ Math.\ \textbf{108} (1992), no.~2, 403--418.
\bibitem[Fr96]{Fr96} J. Franks,
	{\it Area preserving homeomorphisms of open surfaces of genus zero},
	New York J.\ Math.\ \textbf{2} (1996), 1--19.
\bibitem[FO]{FO} K. Fukaya and K. Ono,
	{\it Arnold conjecture and Gromov--Witten invariant},
	Topology \textbf{38} (1999), 933--1048.
\bibitem[GL]{GL} D. Gatien and F. Lalonde,
	{\it Holomorphic cylinders with Lagrangian boundaries and Hamiltonian dynamics},
	Duke Math.\ J. \textbf{102} (2000), no.~3, 485--511.
\bibitem[GG09]{GG09} V. Ginzburg and B. G\"urel,
	{\it Action and index spectra and periodic orbits in Hamiltonian dynamics},
	Geom.\ Topol.\ \textbf{13} (2009), no.~5, 2745--2805.
\bibitem[GG10]{GG10} V. Ginzburg and B. G\"urel,
	{\it Local Floer homology and the action gap},
	J. Symplectic Geom.\ \textbf{8} (2010), no.~3, 323--357.
\bibitem[GG16]{GG16} V. Ginzburg and B. G\"urel,
	{\it Non-contractible periodic orbits in Hamiltonian dynamics on closed symplectic manifolds},
	Compos.\ Math.\ \textbf{152} (2016), no.~9, 1777--1799.
\bibitem[GG17]{GG17} V. Ginzburg and B. G\"urel,
	{\it Conley Conjecture Revisited},
	Int.\ Math.\ Res.\ Not.\ IMRN, rnx137, \texttt{https://doi.org/10.1093/imrn/rnx137}.
\bibitem[G\"u13]{Gu13} B. G\"urel,
	{\it On non-contractible periodic orbits of Hamiltonian diffeomorphisms},
	Bull.\ Lond.\ Math.\ Soc.\ \textbf{45} (2013), no.~6, 1227--1234.
\bibitem[Ha]{Ha} V. Hansen,
	{\it On the fundamental group of a mapping space. An example},
	Compos.\ Math.\ \textbf{28} (1974), no.~1, 33--36.
\bibitem[Hi]{Hi} N. Hingston,
	{\it Subharmonic solutions of Hamiltonian equations on tori},
	Ann.\ of Math.\ (2) \textbf{170} (2009), no.~2, 529--560.
\bibitem[HS]{HS} H. Hofer, D. Salamon,
	{\it Floer homology and Novikov rings},
	in The Floer Memorial Volume, 483--524, Progr.\ Math., 133, Birkhuser, Basel, 1995.
\bibitem[HZ]{HZ} H. Hofer and E. Zehnder,
	{\it Symplectic invariants and Hamiltonian dynamics},
	Birkh\"auser Verlag, Basel (1994).
\bibitem[IKRT]{IKRT} R. Ib\'a\~nez, J. K\c{e}dra, Yu.\ Rudyak and A. Tralle,
	{\it On fundamental groups of symplectically aspherical manifolds},
	Math.\ Z. \textbf{248} (2004), no.~4, 805--826.
\bibitem[KRT]{KRT} J. K\c{e}dra, Yu.\ Rudyak and A. Tralle,
	{\it On fundamental groups of symplectically aspherical manifolds II: Abelian groups},
	Math.\ Z. \textbf{256} (2007), no.~4, 825--835.
\bibitem[Ko]{Ko} P. Kontorovi\v{c},
	{\it Groups with a basis of partition III},
	Mat.\ Sbornik N.S. \textbf{22} (64) (1948), 79--100. 
\bibitem[Ku]{Ku} A. Kurosh,
	{\it The theory of groups},
	Translated from the Russian and edited by K. A. Hirsch,
	2nd English ed.~2 volumes Chelsea Publishing Co., New York (1960).
\bibitem[LT]{LT} G. Liu and G. Tian,
	{\it Floer homology and Arnold conjecture},
	J. Differential Geom.\ \textbf{49} (1998), 1--74.
\bibitem[LO]{LO} G. Lupton and J. Oprea,
	{\it Cohomologically symplectic spaces: Toral actions and the Gottlieb group},
	Trans.\ Amer.\ Math.\ Soc.\ \textbf{347} (1995), 261--288.
\bibitem[Ma]{Ma} M. Mazzucchelli,
	{\it  Symplectically degenerate maxima via generating functions},
	Math.\ Z. \textbf{275} (2013), no.~3--4, 715--739.
\bibitem[On]{On} K. Ono,
	{\it Floer-Novikov cohomology and the flux conjecture},
	Geom.\ Funct.\ Anal.\ \textbf{16} (2006), no.~5, 981--1020.
\bibitem[Or]{Or} R. Orita,
	{\it Non-contractible periodic orbits in Hamiltonian dynamics on tori},
	Bull.\ Lond.\ Math.\ Soc.\ \textbf{49} (2017), no.~4, 571--580.
\bibitem[Pa]{Pa} J. Pardon,
	{\it An algebraic approach to virtual fundamental cycles on moduli spaces of pseudo-holomorphic curves},
	Geom.\ Topol.\ \textbf{20} (2016), 779--1034.
\bibitem[RT]{RT} Y. Rudyak and A. Tralle,
	{\it On symplectic manifolds with aspherical symplectic form},
	Topol.\ Methods Nonlinear Anal.\ \textbf{14} (1999), 353--362.
\bibitem[Sa]{Sa} D. Salamon,
	{\it Lectures on Floer homology},
	in Symplectic Geometry and Topology (Park City, Utah, 1997), IAS/Park City Math.\ Ser.\ \textbf{7}, Amer.\ Math.\ Soc., Providence (1999), 143--230.
\bibitem[SZ]{SZ} D. Salamon and E. Zehnder,
	{\it Morse theory for periodic solutions of Hamiltonian systems and the Maslov index},
	Comm.\ Pure Appl.\ Math.\ \textbf{45} (1992), 1303--1360.
\bibitem[SS]{SS} M. Shub and D. Sullivan,
	{\it A remark on the Lefschetz fixed point formula for differentiable maps}
	Topology \textbf{13} (1974), 189--191.
\end{thebibliography}

\end{document}